\newcommand{\N}{\mathbb{N}}
\newcommand{\R}{\mathbb{R}}
\theoremstyle{plain}
\theoremstyle{plain}
\newtheorem{theorem}{Theorem}[section]
\newtheorem{corollary}[theorem]{Corollary}
\newtheorem{lemma}[theorem]{Lemma}
\newtheorem{proposition}[theorem]{Proposition}
\theoremstyle{definition}
\newtheorem{definition}[theorem]{Definition}
\newtheorem{hyp}[theorem]{Hypothesis}
\title{Rigidity phenomenons for an infinite dimension diffusion operator and cases of near equality in the Bakry--Ledoux isoperimetric comparison Theorem.}
\author{\sc{Rapha{\"e}l Bouyrie } \\ 
\\ \textit{University of Marne--la--Vall\'ee, France}}
\date{}
\begin{document}
\maketitle

\begin{abstract}
We study rigidity phenomenons for infinite dimension diffusion operators of positive curvature using semigroup interpolations. In particular, for such diffusions, an analogous statement of Obata's theorem is established. Moreover, the same rigidity holds for the Bakry--Ledoux isoperimetric comparison Theorem - a result due to Franck Morgan. Recently, Mossel and Neeman have exploited the semigroup proof of Bakry and Ledoux to derive dimension free bounds for the Gaussian isoperimetry. We extend theirs arguments to obtain in particular new quantitative bounds on the spherical isoperimetric inequality in large dimension. 
\end{abstract}

\section{Introduction}

\subsection{General setting}
In this paper, we study rigidity phenomenons for spaces endowed with an infinite dimension diffusion operators. The natural setting consists of abstract Markov triples $(E, \mu, \Gamma)$, which belongs to the framework initiated by the seminal paper of Bakry and \'Emery  \cite{B-E}. For a complete account about Markov diffusion operator, we refer the reader to the recent monograph \cite{B-G-L}. This setting encircles as an important illustration the examples of weighted Riemannian manifolds with generalized Ricci curvature bounded from below and without any condition on the dimension, justifying the term ``infinite dimension''. 

In such framework, sharp geometric and functional inequalities has been established using the principle of monotonicity along the heat flow attached to the subsequent diffusion. In the case of Riemannian manifolds of finite dimension $n$ with positive Ricci curvature some rigidity occurs for inequalities such as spectral gap (a result due to Obata), Myers' maximal diameter theorem (known as Topogonov--Cheng theorem) or L\'evy--Gromov isoperimetric comparison theorem (see \cite{Bay}, \cite{Mi1}). That is, in those examples, a Riemannian manifold achieving equality is isometric to the model spaces, i.e. the Euclidean spheres. In two remarkable recent papers (\cite{C-M1}, \cite{C-M2}) Cavalletti and Mondino have proved analogous statements of these important results in the general framework of metric measure spaces with an appropriate notion of positive curvature and finite dimension. Theirs results goes by needle decompositions of metric measure spaces inspired by the work of Klartag (\cite{Kl}) in Riemannian geometry. Moreover similar rigidity phenomenons has been established by the authors, where in this more general setting the model spaces are spherical suspensions. 

The infinite dimensional case presents in various aspects less rigidity. The analogous models spaces are given by the Gaussian spaces of fixed variance. For example there is no longer boundedness on the diameter of manifolds (as immediately checked by considering Gaussian space). However, it is a natural guess that semigroup tools are well suited to investigate rigidity phenomenons in this setting : indeed since such inequalities are established by monotoncity along the heat flow, equality cases correspond to a constant evolution along such heat flow.

The aim of this paper is to investigate this idea more precisely. A common and classical scheme is to integrate in space and use a commutation property between the gradients and the underlying semigroups. Any function saturating these functional inequalities satisfies therefore equality in theses commutations. We will see that it implies that such function is necessary an eigenfunction of the underlying diffusion operator, a specific property of the Gaussian space. Before stating our results, we briefly recall the geometric and functional inequalities subsequent to this work.

\vspace{2mm}

The isoperimetric problem consists in minimizing the boundary of sets given a fixed volume. Isoperimetrical problems has a long history, and make sense in very general metric measure spaces $(E, d, \mu)$. Indeed, one only need a measure to define the ``volume'' and a distance to define the ``boundary''. In this context, denote the outer Minkowski content by
\[
\mu^+(A) = \liminf_{r \to 0} \frac{\mu(A_r) - \mu(A)}{r},
\]
with $A_r = \{ y \in E, d(x, A) \leq r \},$ where $d(x,A) = \mathrm{inf} \{d(x,y), \, y \in A \}$. Define then the isoperimetric profile of the set $(E, d, \mu)$ by
\[
\mathcal{I}_{(E, d, \mu)} (v) = \mathrm{inf}\{ \mu^{+}(A),  \, A \subset E, \mu(A) = v \}. 
\] 
The isoperimetric problem is two-fold : one aims to find the function $\mathcal{I}_{(E, d, \mu)}$ as well as describing the optimal sets. In general such a problem is very difficult to solve. Even the existence of optimal sets, called isoperimetric sets, is far from being guaranteed. Throughout this note, we will consider the isoperimetric problem over a Riemannian manifold $M$ with some probability measure $\mu$ of the form $e^{-\psi} d\mathrm{vol}$, where $\mathrm{vol}$ designs the canonical Riemannan volume. We will consider the usual distance induced by the metric $g$. As a consequence, $\mathcal{I}_{(M, d, \mu)}$ is defined on $[0,1]$ and symmetric with respect to $1/2$. 

The Euclidean spheres are among the few examples in which the isoperimetric problem is completely solved. This goes back to the early XXth century by L\'evy and Schmidt. The extremal sets are, perhaps unsurprisingly, the spherical caps. Later on, Gromov have extended the proof to more general Riemannian manifold with Ricci curvature bounded form below. The L\'evy--Gromov comparison theorem can be then stated as follows.   

\begin{theorem} [L\'evy--Gromov]
Let $(M,g, \mu)$ be a compact Riemannian manifold of dimension $n \geq 2$ with Ricci curvature bounded from below by $\kappa > 0$ (in the sense that $Ric_{(M,g)} \geq \kappa g$), with $\mu = \frac{vol}{vol(M)}$ its normalized measure. 

Then its isoperimetric profile satisfies $\mathcal{I}_{(M,g, \mu)} \geq \mathcal{I}_{(\mathbb{S}_{\kappa}^n, g_{\kappa}, \sigma_{\kappa,n})}$, where $\mathbb{S}_{\kappa}^n$ denotes the $n$ dimensional sphere whose Ricci curvature is equal to $\kappa$ - that is of radius $\sqrt{\frac{n-1}{\kappa}}$ - equipped with its normalized surface measure $ \sigma_{\kappa,n}$. 
\end{theorem}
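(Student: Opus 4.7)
The plan is to follow Gromov's classical strategy, which in modern language can be executed via the $L^1$-localization (needle decomposition) technique of Klartag, as already alluded to in the introduction.

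First, by compactness of $M$ and lower semicontinuity of the perimeter functional in the BV framework, an isoperimetric minimizer $A \subset M$ with $\mu(A) = v$ and $\mu^+(A) = \mathcal{I}_{(M,g,\mu)}(v)$ exists. Classical regularity theory gives that $\partial A$ is a smooth hypersurface outside a singular set of codimension at least eight, so that $\mu^+(A)$ coincides with the surface measure of $\partial A$; this step is not strictly needed for the inequality but clarifies what one is comparing.

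Next, I would apply the localization procedure to the balanced indicator $\chi_A - v$. This yields a disintegration
\[
\mu = \int_Q \mu_q \, \mathrm{d}\nu(q),
\]
where $Q$ is a measurable parameter space and each conditional $\mu_q$ is a probability measure supported on a single geodesic segment (``needle'') $\gamma_q \subset M$. The key properties transferred from the $n$-dimensional Ricci bound $Ric_{(M,g)} \geq \kappa g$ are: (i) the conditionals are balanced, $\mu_q(A) = v$ for $\nu$-a.e. $q$, and (ii) the density of $\mu_q$ with respect to arclength along $\gamma_q$ satisfies the $1$-dimensional $CD(\kappa, n)$ condition. This reduction distills the whole $n$-dimensional geometry into a family of one-dimensional problems carrying the sharp curvature--dimension information.

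Finally, one solves the one-dimensional problem: for a probability density on an interval satisfying $CD(\kappa, n)$, the extremal measure is precisely the push-forward of $\sigma_{\kappa,n}$ onto a meridian of $\mathbb{S}^n_\kappa$, and the extremal sets are the half-intervals. A direct analysis of the $1$-dimensional isoperimetric problem then gives $\mu_q^+(A \cap \gamma_q) \geq \mathcal{I}_{\mathbb{S}^n_\kappa}(v)$ on each needle, and integrating against $\nu$ over $Q$ produces the desired inequality $\mu^+(A) \geq \mathcal{I}_{\mathbb{S}^n_\kappa}(v)$.

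The principal obstacle lies in step two: one must certify that the $n$-dimensional Ricci lower bound on $M$ produces exactly the $1$-dimensional $CD(\kappa, n)$ condition on the conditional densities, which ultimately rests on sharp Jacobi field comparison transverse to the transport rays. Once this technical heart of the argument is in place, the $1$-dimensional comparison and the final integration are largely mechanical.
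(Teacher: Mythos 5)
The paper does not actually prove this theorem: it is stated in the introduction as classical background (attributed to L\'evy--Schmidt for the sphere and to Gromov for the general comparison), so there is no in-paper argument to measure you against. Your route is the localization proof in the style of Klartag \cite{Kl} and Cavalletti--Mondino \cite{C-M1}, \cite{C-M2} — references the paper itself invokes for needle decompositions — and it is a sound strategy, though it is worth noting it is not a ``modern language'' rendering of Gromov's original argument, which instead works directly on a regular isoperimetric minimizer via mean curvature comparison and Heintze--Karcher type estimates.

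Two remarks on the sketch itself. First, your opening step (existence and almost-everywhere regularity of a minimizer) is not needed for the localization route: the needle decomposition applies to an arbitrary Borel set $A$ with $\mu(A)=v$, and one compares Minkowski contents directly. For that comparison you do need one small argument you gloss over: since the $r$-enlargement of $A\cap\gamma_q$ inside the needle $\gamma_q$ is contained in $A_r\cap\gamma_q$, one gets $\mu_q(A_r)-\mu_q(A)\geq \mu_q\bigl((A\cap\gamma_q)_r^{\gamma_q}\bigr)-\mu_q(A\cap\gamma_q)$, and then Fatou's lemma in the limit $r\to 0$ yields $\mu^+(A)\geq \int_Q \mu_q^+(A\cap\gamma_q)\,d\nu(q)$; without this the global perimeter need not dominate the integral of the needle perimeters. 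Second, the two nontrivial inputs — that the conditionals of the decomposition guided by $\chi_A-v$ are balanced one-dimensional $CD(\kappa,n)$ densities, and that the one-dimensional isoperimetric profile of any such density is bounded below by $\mathcal{I}_{(\mathbb{S}^n_\kappa,\sigma_{\kappa,n})}$ — are both genuine theorems (Klartag's memoir for the first; the one-dimensional model analysis of E. Milman \cite{Mi1} or the appendix of \cite{C-M2} for the second). You correctly flag the first as the technical heart but assert the second without comment; as a proof outline resting on these cited results the argument is complete, but neither step is ``mechanical.''
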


The above statement links curvature and dimension. Curvature and dimension are also linked by Lichnerowicz' minoration of the first eigenvalue of the Laplace Beltrami operator $\lambda_1$. In the above setting, Lichnerowicz theorem express that $\lambda_1 \geq \frac{n \kappa}{n-1}$ . Actually, this constant arises in classical functional inequalities that will be recalled below. First of all, Lichnerowicz' minoration is equivalent to the following spectral gap inequality (by classical integration by parts):
\[
\forall f \in H^2(M), \, \mathrm{Var}_{\mu} (f) = \int_M f^2 d\mu - \bigg( \int_M f d\mu \bigg)^2  \leq \frac{n-1}{n \kappa} \int_{M} | \nabla f |^2 d\mu,
\]
where we have denoted $H^{2}(M) = \{f : \, M \to \R,  \, \int_{M} f^2 d\mu  + \int_{M} | \nabla f |^2 d\mu <  + \infty \}.$

A further important functional inequality is given by the following Sobolev logarithmic inequality. For all $f \in L^2 \log L^2,$
\[
\mathrm{Ent}_{\mu} (f^2) =  \int_M f^2 \log f^2 d\mu - \bigg( \int_M f^2  d\mu \bigg) \log \bigg( \int_M f^2 d\mu \bigg)  \leq 2 \frac{n-1}{n \kappa} \int_{M} | \nabla f |^2 d\mu.
\]

The optimal constant $\frac{n \kappa}{n-1}$ in this inequality is somewhat challenging to reach. For instance, in the case of the Spheres $\mathbb{S}_{\kappa}^n$, $n \geq 2$, it has been established by Mueller and Weissler \cite{M-W} only few years before \cite{B-E}. 

For the further purposes, as a consequence of an inequality linking spectral gap constant and Sobolev logarithmic constant originally proven by Rothaus \cite{Rot}, the Obata rigidity theorem holds for the Sobolev logarithmic inequality (see also \cite{B-G-L}). That is any $n$ dimensional Riemannian manifold with Ricci curvature bounded from below by $\kappa$ such that its Sobolev logarithmic constant is equal to $\frac{n\kappa}{n-1}$ is isometric to $\mathbb{S}_{\kappa}^n$.

\vspace{2mm}

It is an observation actually going back to Poincar\'e that the Gaussian space $(\R^n, \gamma_n)$ where $\gamma_n(dx) = \frac{1}{\sqrt{2 \pi}} e^{- |x|^2/2} dx$ designs the standard Gaussian measure can be seen as the limit in $N$ of $N$ dimensional Euclidean spheres of radius $\sqrt{N-1}$. More presicely, the uniform measure $\mu_N$ on these spheres projected on a fixed subsace $\R^n$ converges to the measure $\gamma_n$. This observation combined with the L\'evy--Gromov's theorem is at the root of the Gaussian isoperimetry, proved in the mid 70's independently by Borell and Sudiakov--Tsirelson. The isoperimetric sets are the half spaces. That is, if $A \subset \R^n$ and $H$ is an half space such that $\gamma_n(A) = \gamma_n(H)$, then
\[
\gamma^{+}(A) \geq \gamma^{+}(H) = \varphi (\Phi^{-1}(\gamma(A))).
\]
Here, and throughout this paper, $\varphi$ designs the density of $\gamma_1$ and $\Phi (x) = \int_{-\infty}^x \varphi(t)dt$ its cummulative distribution function. 

It is an immediate consequence that the isoperimetric profile $\mathcal{I}_{\gamma}$ of the Gaussian space is independant of $n$ and given by the function $ \varphi \circ \Phi^{-1}$. Around twenty years later, a proof of Bobkov of a functional version led to an extension of L\'evy--Gromov comparison theorem by Bakry--Ledoux, in a fairly abstract setting that rely on a \emph{curvature dimension} condition denoted by $CD(\kappa, \infty)$, $\kappa \in \R,$ and that will be described in Section $2$. In particular, for Riemannian manifolds, the result is the following analogous of L\'evy--Gromov comparison theorem. 

\begin{theorem} [Bakry--Ledoux]
Let $(M, g, e^{-\psi} d\mathrm{vol})$ be a weighted Riemannian manifold such that $\mathrm{Ric}_g + \nabla^2 \psi \geq \kappa g$ (unformly as symmetric tensors), with $\kappa >0$.  Then its isoperimetric profile satisfies 
\[
\mathcal{I}_{\mu} \geq \sqrt{\kappa} \mathcal{I}_{\gamma} = \mathcal{I}_{\gamma_{\kappa}}
\]
where $\gamma_{\kappa}$ is the Gaussian measure on $\R$ of variance $\kappa^{-1}$.
\end{theorem}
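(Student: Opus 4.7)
The plan is to reduce the theorem to the functional Bobkov--Bakry--Ledoux inequality: for every smooth $f: M \to [0,1]$,
\[
\mathcal{I}_\gamma\!\left( \int_M f\, d\mu \right) \;\leq\; \int_M \sqrt{\mathcal{I}_\gamma(f)^2 + \tfrac{1}{\kappa}|\nabla f|^2}\, d\mu, \qquad (\star)
\]
from which the isoperimetric comparison is recovered by the standard approximation-of-characteristic-functions argument. For a set $A \subset M$ with smooth boundary, let $f_\epsilon$ be the Lipschitz function equal to $1$ on $A$, vanishing outside $A_\epsilon$, and interpolating as $1 - d(\cdot,A)/\epsilon$ on $A_\epsilon \setminus A$. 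Since $\mathcal{I}_\gamma$ vanishes at both endpoints $\{0,1\}$ and $f_\epsilon$ takes values in $[0,1]$ with $\mathcal{I}_\gamma(f_\epsilon)$ concentrating on the thin transition layer, the right-hand side of $(\star)$ reduces to $\tfrac{1}{\sqrt{\kappa}}\int_M |\nabla f_\epsilon|\, d\mu$ in the limit; by the co-area formula this quantity converges to $\tfrac{1}{\sqrt{\kappa}}\,\mu^+(A)$, while the left-hand side tends to $\mathcal{I}_\gamma(\mu(A))$, yielding $\mathcal{I}_\mu(v) \geq \sqrt{\kappa}\,\mathcal{I}_\gamma(v) = \mathcal{I}_{\gamma_\kappa}(v)$.

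The functional inequality $(\star)$ is proved by a semigroup interpolation. Let $(P_t)_{t\geq 0}$ be the heat semigroup associated with the diffusion generator $L = \Delta_g - \nabla\psi\cdot\nabla$, reversible with respect to $\mu$. Since the curvature hypothesis $\mathrm{Ric}_g + \nabla^2\psi \geq \kappa g$ is precisely the abstract $\mathrm{CD}(\kappa,\infty)$ condition, the exponential commutation
\[
|\nabla P_t f|^2 \;\leq\; e^{-2\kappa t}\, P_t(|\nabla f|^2), \qquad t \geq 0,
\]
is at our disposal. Fix $T > 0$ and consider the pointwise interpolation
\[
\Theta(t) \;=\; P_t\!\left(\sqrt{\mathcal{I}_\gamma(P_{T-t} f)^2 + c(t)\,|\nabla P_{T-t} f|^2}\,\right), \qquad t \in [0,T],
\]
where the weight $c(t)$ is tailored (of order $\tfrac{1}{\kappa}(e^{-2\kappa(T-t)} - e^{-2\kappa T})$) so that the derivative identity below closes. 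The strategy is to prove the pointwise monotonicity of $\Theta$ in $t$; integrating the resulting comparison between $\Theta(0)$ and $\Theta(T)$ against the invariant measure $\mu$ and passing to the limit $T \to \infty$ yields $(\star)$ with the sharp constant $1/\kappa$.

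The main obstacle is the monotonicity of $\Theta$ itself. Differentiating and using the chain rule for the diffusion $L$ produces, after rearrangement, a combination of the first and second derivatives of $\mathcal{I}_\gamma$ evaluated at $P_s f$, coupled with $|\nabla P_s f|^2$ and with the Bochner form $\Gamma_2(P_s f)$. Two algebraic identities then drive the conclusion. First, the ODE $\mathcal{I}_\gamma\,\mathcal{I}_\gamma'' = -1$ satisfied by the Gaussian isoperimetric profile $\mathcal{I}_\gamma = \varphi\circ\Phi^{-1}$ cancels the cross terms in the derivative and is the structural reason why the Gaussian profile is extremal. Second, the $\mathrm{CD}(\kappa,\infty)$ inequality $\Gamma_2(g) \geq \kappa\,\Gamma(g)$, applied to $g = P_s f$, produces the sharp factor $\kappa$ multiplying $|\nabla f|^2$. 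Combining the two, the sign of the derivative of $\Theta$ reduces to the nonnegativity of a perfect square in $\nabla^2 P_s f$ and $\nabla P_s f$, whose vanishing locus characterizes the Gaussian extremals, completing the proof.
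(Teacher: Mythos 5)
Your proposal is correct and follows essentially the same route as the paper: the Bakry--Ledoux semigroup interpolation proof of the functional Bobkov inequality (driven by the ODE $\mathcal{I}_\gamma\,\mathcal{I}_\gamma''=-1$ and the condition $\Gamma_2\geq\kappa\Gamma$, the latter really being applied to the composed function $\Phi^{-1}\circ P_s f$ so as to absorb the cross term $\Gamma(P_sf,\Gamma(P_sf))$), followed by approximation of $\mathbf{1}_A$ by Lipschitz functions; the only difference is that you run the pointwise two-parameter interpolation $P_t(\cdots P_{T-t}f\cdots)$ with a time-dependent coefficient, whereas the paper integrates in space the one-parameter functional $\int_E\sqrt{\mathcal{I}_\gamma^2(P_sf)+c\,\Gamma(P_sf)}\,d\mu$ with a fixed coefficient. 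Note also that your constant $1/\kappa$ under the square root in $(\star)$ is the one consistent with the conclusion $\mathcal{I}_\mu\geq\sqrt{\kappa}\,\mathcal{I}_\gamma$ (the factor $1/\kappa^2$ in the paper's display \eqref{eq.BobBL} does not match the stated theorem).
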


In addition of the proof of Bakry--Ledoux, two others proofs of this theorem are known. One by Bobkov for log concave measures on $\R^n$ using localization techniques and one by Franck Morgan (\cite{Mo}) for manifold with densities using more geometric arguments. Notice also that the Bakry--Ledoux setting is the more general, as it allows for more general structure than Riemmanian manifolds.

Concerning functionnal inequalities, in the setting of the above theorem, sharp spectral gap and Sobolev inequalities has been established by Bakry and \'Emery (see \cite{B-E}). It holds 
\[
\forall f \in H^2(M), \, \mathrm{Var}_{\mu} (f) \leq \frac{1}{\kappa} \int_{M} | \nabla f |^2 d\mu,
\]
and 
\[
\forall f \in H^2(M), \, \mathrm{Ent}_{\mu} (f^2) \leq \frac{1}{\kappa} \int_{M} | \nabla f |^2 d\mu.
\]
Moreover, the constant $\kappa$ is sharp and attained for the Gaussian space $(\R, | \, \cdot \, |, \gamma_{\kappa})$.

Towards rigidity, for the isoperimetric comparison a statement follows from the proof of Franck Morgan. The result is the following : if there exists $v \in (0,1)$ such that $\mathcal{I}_{\mu} (v) = \mathcal{I}_{\gamma} (v)$, then the manifold splits as $(M',g',\mu') \times (\R, | \, \cdot \, |, \gamma_k)$ where $(M',g',\mu')$ is such that $\mathrm{Ric}_g' + \nabla^2 \psi' \geq \kappa g'$ if $\mu' = e^{-\psi'} d\mathrm{vol}'$. Such rigidity can be seen as an infinite dimensionnal analogous statement of the results of Bayle and Milman. It is a natural guess that the same splitting phenomenon holds for the functionnal inequalities, i.e. if the manifold $(M, g, e^{-\psi} d\mathrm{vol})$ reach the sharp $\kappa$ spectral or Sobolev constant.

\subsection{Statements of our results}

In this paper, we establish some rigidity statements in the infinite dimension setting (i.e. spaces satisfying appropriate curvature dimension conditions - see next section for a precise definition). In particular, following the result of Klartag \cite{Kl} about needles decomposition and the ideas of \cite{C-M2}, we establish an analogous statement of Obata's theorem for manifolds with densities. 

\begin{theorem} \label{infObata}
Let $(M, g, e^{-\psi} d\mathrm{vol})$ a Riemannian manifold with the following condition $\mathrm{Ric}_g + \nabla^2 \psi \geq \kappa g$.  Assume that $\kappa$ is either the spectral gap constant or the Sobolev logarithmic constant. Then there exists another manifold $(M', g', e^{-\psi '}d\mathrm{vol'})$ with the same condition  $\mathrm{Ric}_{g'} + \nabla^2 \psi' \geq \kappa$ such that $M$ splits as
\[
(M, g, e^{-\psi} d\mathrm{vol}) = (\R, | \cdot |, d\gamma_{\kappa}) \times (M', g',e^{-\psi '} d\mathrm{vol'}),
\]
where $d\gamma_{\kappa} = e^{- \kappa x^2 / 2 } dx$ is the Gaussian measure on the real line with variance $1/ \kappa$.
\end{theorem}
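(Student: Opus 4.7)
The plan is to exploit the standard semigroup proof of the Bakry--\'Emery Poincar\'e and log-Sobolev inequalities, and to identify precisely which step fails to be strict when the sharp constant $\kappa$ is attained. Recall that both inequalities are obtained from the $\Gamma_2$-criterion $\Gamma_2(h) \geq \kappa\,\Gamma(h)$, typically through the subcommutation $|\nabla P_t h|^2 \leq e^{-2\kappa t} P_t(|\nabla h|^2)$, itself derived by integrating $\partial_s \bigl(e^{-2\kappa s} P_s(|\nabla P_{t-s}h|^2)\bigr) = 2 e^{-2\kappa s} P_s\bigl(\Gamma_2(P_{t-s}h) - \kappa\,\Gamma(P_{t-s}h)\bigr) \geq 0$. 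If $f$ saturates the spectral gap inequality, then the inequality chain that produces $\mathrm{Var}_\mu(f) \leq \kappa^{-1}\int|\nabla f|^2 d\mu$ must be an equality for every $t>0$. Unwinding this equality and using smoothing properties of $P_t$ forces the pointwise identity $\Gamma_2(P_t f) = \kappa\,\Gamma(P_t f)$, and in particular, letting $t \to 0$ on the smooth eigenfunction $f$, $\Gamma_2(f) = \kappa\,\Gamma(f)$.

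The second step is to apply Bochner's formula in the weighted setting,
\[
\Gamma_2(f) = |\nabla^2 f|_{HS}^2 + (\mathrm{Ric}_g + \nabla^2\psi)(\nabla f, \nabla f),
\]
together with the hypothesis $\mathrm{Ric}_g + \nabla^2\psi \geq \kappa g$. Both terms are nonnegative correction terms to $\kappa\,\Gamma(f)$, so the equality $\Gamma_2(f) = \kappa\,\Gamma(f)$ yields simultaneously $\nabla^2 f \equiv 0$ and $(\mathrm{Ric}_g + \nabla^2\psi - \kappa g)(\nabla f, \nabla f) = 0$. Since $f$ is an eigenfunction, it is nonconstant, so $\nabla f$ is a nontrivial parallel vector field on $M$. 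Invoking the de Rham splitting theorem, $(M, g)$ splits isometrically as $(\mathbb{R}, |\cdot|) \times (M', g')$, with $\nabla f$ directed along the $\mathbb{R}$-factor and $|\nabla f|$ constant; normalizing so $f(x,y) = x$, the eigenvalue equation $-Lf = \kappa f$ reduces to $\partial_x \psi(x,y) = \kappa x$. Integrating gives $\psi(x,y) = \tfrac{\kappa x^2}{2} + \psi'(y)$, so the measure factorizes as $d\gamma_\kappa \otimes e^{-\psi'}d\mathrm{vol}'$, and the pointwise curvature identity transferred to $M'$ gives $\mathrm{Ric}_{g'} + \nabla^2\psi' \geq \kappa g'$.

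For the log-Sobolev case, I would follow the classical argument of Rothaus referenced in the introduction: the Poincar\'e constant is always bounded above by the log-Sobolev constant, and applied to small perturbations $1 + \varepsilon f$ of a first eigenfunction (with the entropy and energy expansions to order $\varepsilon^2$), saturation of the log-Sobolev inequality at $\kappa^{-1}$ implies saturation of the Poincar\'e inequality at the same $\kappa^{-1}$. This reduces the log-Sobolev rigidity to the spectral gap rigidity already obtained.

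The main technical obstacle will be the passage from the \emph{integrated} equality in the semigroup inequality back to the \emph{pointwise} identity $\Gamma_2(f) = \kappa\,\Gamma(f)$: one must argue that the $\mu$-integral of the nonnegative quantity $\Gamma_2(P_t f) - \kappa\,\Gamma(P_t f)$ vanishes for all $t > 0$, then use hypoellipticity of the underlying diffusion to upgrade to a pointwise identity, and finally let $t \to 0$ on the already-smooth eigenfunction $f$. The secondary subtlety is the final measure factorization step: the product structure on the Riemannian side is delivered by de Rham, but one must verify that the weighted volume indeed decouples, which follows cleanly from the affine identification $\psi(x,\cdot) - \tfrac{\kappa x^2}{2}$ being independent of $x$ once $\partial_x \psi = \kappa x$ is established.
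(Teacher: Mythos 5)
Your proposal is correct in its overall architecture and coincides with the paper's proof up to the key identity: saturation forces equality in the commutation $\Gamma(P_tf)\leq e^{-2\kappa t}P_t(\Gamma(f))$, hence the pointwise identity $\Gamma_2(f)=\kappa\,\Gamma(f)$, and then Bochner's weighted formula kills both the Hessian term and the curvature excess in the direction of $\nabla f$. Where you genuinely diverge is the splitting step. The paper does \emph{not} run the de Rham/parallel-field argument: it invokes Klartag's needle decomposition to produce the factor $(\R,|\cdot|,\gamma_\kappa)$, and in fact it explicitly records your observation ($\nabla^2 f\equiv 0$, so $\nabla f$ is parallel and $\mathrm{Ric}_g(\nabla f,\nabla f)=0$) only as a closing remark, wondering whether one can avoid needles this way and pointing to the Wei--Wylie splitting theorem. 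Your route -- a nontrivial parallel gradient field on a complete weighted manifold forces an isometric splitting $M\cong\R\times f^{-1}(0)$, after which $-Lf=\kappa f$ with $\Delta f=0$ gives $\partial_x\psi=\kappa x$ and the measure factorizes -- is the more elementary and self-contained argument for the smooth Riemannian case, and it delivers the description of the extremizers (linear in the Gaussian direction) for free; the needle-decomposition route is what one would need in a non-smooth metric-measure setting. Just be explicit about completeness of $(M,g)$ (needed for the global splitting) and about the existence of an eigenfunction when $\kappa$ is only known to equal the spectral gap constant -- the paper glosses over the latter as well.

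One small correction in your log-Sobolev reduction: the linearization of the log-Sobolev inequality around constants ($1+\varepsilon f$) only yields $\lambda\geq\rho$, i.e.\ $\rho=\kappa$ gives $\lambda\geq\kappa$, which you already know from Bakry--\'Emery. To conclude $\lambda=\kappa$ you need the inequality in the \emph{other} direction, namely Rothaus' bound $\rho\geq\theta\kappa+(1-\theta)\lambda$ under $CD(\kappa,\infty)$, which forces $\lambda\leq\kappa$ when $\rho=\kappa$; this is the inequality the paper actually uses. Alternatively, if a non-constant log-Sobolev extremizer $f$ exists, you can bypass the reduction entirely and run your argument directly on $\log f$, since the saturation then gives $\Gamma_2(\log f)=\kappa\,\Gamma(\log f)$.
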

Moreover the eigenfunctions associated to the constant $\kappa$ are linear functions in the Gaussian direction. 
The same conclusion holds for the optimal Sobolev logarithmic constant and in the same manner, any non constant function achieving equality in the Sobolev logarithmic inequality is of the same form.

In particular, the last assertion is an extension of Carlen results about characterization of the equality cases in Sobolev logarithmic inequality. 

Concerning the isoperimetric problem, Carlen and Kerce \cite{C-K} have used the Bakry and Ledoux semigroup proof to characterize the isoperimetric minimizers of the Gaussian space without condition of smoothness. Extending the ideas of \cite{C-K}, we re derive the rigidity result of Franck Morgan.  

\begin{theorem} \label{frmor}
Let $(M, d, \mu)$ a weighted Riemmannian manifold of $CD(\kappa, \infty)$ class. If there exists a $v \in (0,1)$ such that $\mathcal{I}_{(M, \mu)} (v) = \mathcal{I}_{(\R, \gamma_{\kappa})} (v)$ then $(M, g, \mu) = (\R, | \, \cdot \, |, \gamma_{\kappa}) \times (M', g',\mu')$. Moreover the isoperimetric minimizers are of the form $( - \infty, \Phi_{\kappa}^{-1}(v)) \times M'$.
\end{theorem}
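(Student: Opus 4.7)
The plan is to revisit the Bobkov--Bakry--Ledoux semigroup proof of the isoperimetric comparison theorem and track when the underlying monotonicity collapses to an equality. The inequality $\mathcal{I}_\mu\geq\mathcal{I}_{\gamma_\kappa}$ is equivalent to Bobkov's functional inequality: for every smooth $f:M\to[0,1]$,
$$
\mathcal{I}_\gamma\!\left(\int f\,d\mu\right)\leq\int\sqrt{\mathcal{I}_\gamma(f)^2+\kappa^{-1}|\nabla f|^2}\,d\mu,
$$
and under $CD(\kappa,\infty)$ it is obtained by proving that
$$
F_f(t)=\int\sqrt{\mathcal{I}_\gamma(P_tf)^2+c(t)\,|\nabla P_tf|^2}\,d\mu,\qquad c(t)=\kappa^{-1}(1-e^{-2\kappa t}),
$$
is nonincreasing in $t$, interpolating between the right-hand side (at $t=0$) and the left-hand side (at $t=\infty$).

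Following Carlen--Kerce, I would replace an isoperimetric set $A$ with $\mu(A)=v$ (or a sequence of near-minimizers if no minimizer exists) by its smoothing $P_\epsilon\mathbf{1}_A$ and apply Bobkov's inequality; as $\epsilon\to 0$, the Minkowski term on the right converges to $\mu^+(A)$. The saturation hypothesis $\mathcal{I}_\mu(v)=\mathcal{I}_{\gamma_\kappa}(v)$ then forces $F_{\mathbf{1}_A}$ to be constant on $(0,\infty)$, so $F'_{\mathbf{1}_A}(t)\equiv 0$. A direct differentiation combined with integration by parts and the Bochner identity rewrites $-F'(t)$ as an integral of two nonnegative quantities: a Kato-type defect $|\mathrm{Hess}\,P_tf|^2-|\nabla|\nabla P_tf||^2\geq 0$, and the curvature defect $\Gamma_2(P_tf)-\kappa|\nabla P_tf|^2\geq 0$ coming from $CD(\kappa,\infty)$.

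The rigidity then runs as follows. Vanishing of the Kato defect forces $\mathrm{Hess}\,P_tf$ to be, pointwise, a rank-one tensor proportional to $\nabla P_tf\otimes\nabla P_tf$, so the unit vector field $e=\nabla P_tf/|\nabla P_tf|$ is parallel, producing a totally geodesic one-dimensional foliation and, through a de Rham-type argument, an isometric splitting $(M,g)=(\R,|\cdot|)\times(M',g')$ adapted to $e$. In these coordinates $P_tf$ depends only on $s\in\R$, and vanishing of the curvature defect pins the Bakry--Emery Ricci tensor in the $\partial_s$ direction to equal $\kappa$; writing $\mu=e^{-\psi}d\mathrm{vol}$, this translates into $\partial_s^2\psi=\kappa$ along the $\R$-factor, so $\psi(s,y)=\kappa s^2/2+\psi'(y)$ up to an affine shift. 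This identifies the $\R$-factor with $(\R,\gamma_\kappa)$ and leaves $(M',\mu')$ of class $CD(\kappa,\infty)$. Under this decomposition the saturation of Bobkov's inequality for $\mathbf{1}_A$ forces $A$ to depend only on $s$, and the one-dimensional Gaussian isoperimetry then gives $A=(-\infty,\Phi_\kappa^{-1}(v))\times M'$.

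The main technical obstacle, already present in Carlen--Kerce, is the low regularity of $\mathbf{1}_A$: one has to justify the interchange of the $\epsilon\to 0$ limit with the differentiation of $F_f(t)$ and, in the absence of an honest minimizer, run a compactness/stability argument on near-minimizers in order to extract the extremal set. Once these regularity issues are handled, the $\Gamma_2$-rigidity above delivers both the splitting and the shape of the minimizer simultaneously.
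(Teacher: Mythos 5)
Your proposal is correct in substance and follows the same overall strategy as the paper (saturation of the Bobkov functional forces the nonnegative integrand in $-F'(t)$ to vanish for all $t>0$), but the way you extract rigidity from that vanishing is genuinely different. The paper performs the Carlen--Kerce change of variables $h_t=\Phi^{-1}\circ P_t\mathbf{1}_A$, so that the defect becomes exactly $(\Gamma_2-\kappa\Gamma)(h_t)$; its ``Stein lemma'' (Cauchy--Schwarz against the spectral gap) upgrades $\Gamma_2(h_t)=\kappa\Gamma(h_t)$ to $Lh_t=-\kappa h_t$, and the splitting is then imported from Theorem~\ref{ThmObatab}, which rests on Klartag's needle decomposition; the half-space identification is done via the explicit Mehler formula ($\Phi^{-1}\circ Q_t\mathbf{1}_H$ linear, plus injectivity and smoothing of $Q_t$). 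You instead work directly with $P_tf$, split the defect into a Kato term and a curvature term, deduce that $e=\nabla P_tf/|\nabla P_tf|$ is parallel, and invoke de Rham splitting plus the ODE $\partial_s^2\psi=\kappa$; you then finish with one-dimensional Gaussian rigidity. This is a legitimate and arguably more self-contained Riemannian route (it avoids the needle decomposition entirely), at the price of being tied to the smooth manifold setting, and it needs two small repairs: your unit field $e$ is only defined off the critical set of $P_tf$ (working with $h_t$, whose gradient is genuinely parallel hence of constant norm, removes this issue), and the decomposition of $-F'$ into exactly two nonnegative pieces is not quite the full story --- there is also a Cauchy--Schwarz defect and a perfect square $(\mathcal{I}_\gamma(f_t)|\nabla|\nabla f_t||-|\mathcal{I}'_\gamma(f_t)|\,\Gamma(f_t))^2$, though the two pieces you use are indeed separately nonnegative and hence forced to vanish. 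The one place where you defer real work that the paper actually carries out is the existence and regularity of a minimizer: the paper establishes (via Ritor\'e--Rosales and the Almgren--Morgan regularity theory) that an extremal set exists, that its Minkowski content coincides with the weighted De Giorgi perimeter, and that the latter equals $\lim_{t\to0}\int_M|\nabla P_t\mathbf{1}_A|\,d\mu$; your ``compactness/stability on near-minimizers'' would need to be fleshed out to the same effect before the semigroup argument can be launched on an honest set $A$.
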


The main property of the standard (i.e. with $\kappa =1$) Gaussian isoperimetric profile is its independence with respect to the dimension. Above this rigidity theorem, a more difficult question is to address the cases of near equality while keeping dimension free bounds. In order to investigate case of near equality in isoperimetric inequality, the semigroup proof of Bakry and Ledoux seems well-suited as it describe a monotone evolution along the heat-flow. This idea has been put in shape by Mossel and Neeman \cite{M-N1}, who where the first to derive dimension free bounds for the Gaussian isoperimetry deficit $\delta$, with a dependance of order $\log^{-1/2} ( \delta^{-1})$. Notice that the optimal dependance in $\delta$ for the Gaussian space is $\sqrt{\delta}$ and has been established using different techniques since then in \cite{B-B-J} (see also \cite{Eld}). Within the scheme of proof of \cite{M-N1}, it does not seem to be possible to even reduce the dependance to a power of $\delta$. However there is - we believe - two main advantages of \cite{M-N1} with respect to \cite{Eld} and \cite{B-B-J}. First, the proof is somewhat simpler and more importantly, most of the arguments of \cite{M-N1} are valid in \textit{general} $CD(1, \infty)$ spaces. In the last section, we extend the proof of Mossen and Neeamn to obtain a quantitative and dimension free estimates for the Bobkov's inequality in the case of high dimensional Euclidean spheres of radius $\sqrt{n-1}$. Our main result can be stated as follows.

\begin{theorem} \label{thmdefsph}
Let $(\mathbb{S}_1^n,\mu)$ be the Euclidean sphere of radius $\sqrt{n-1}$ endowed with the uniform probability measure $\mu$. Let $A$ be a (Borel measurable) subset of $\mathbb{S}_1^n$ such that
\[
\mu^+(A) \leq \mathcal{I}_{\gamma} (\mu(A)) + \delta.
\]
Then, denoting $\delta_n = \max(\delta, 1/n)$, there exists a spherical cap $H$  and a positive constant $c \in (0.49, 1/2)$ such that  
\[
\mu(A \Delta H) \leq O( |\log \delta_n|^{-c}).
\]
\end{theorem}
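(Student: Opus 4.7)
The approach I would take is to transplant the semigroup scheme of Mossel and Neeman \cite{M-N1} from Gaussian space to the sphere $\mathbb{S}_1^n$, exploiting the fact that $\mathbb{S}_1^n$ satisfies $CD(1,\infty)$ (as its Ricci curvature equals $1$) and that this is essentially the only abstract input used by the Bakry--Ledoux semigroup proof. The first step is to recast the set-theoretic hypothesis in functional form via Bobkov's inequality
\[
\mathcal{I}_{\gamma}\!\left(\int f\, d\mu\right) \leq \int \sqrt{\mathcal{I}_{\gamma}(f)^{2} + |\nabla f|^{2}}\, d\mu,
\]
valid on any $CD(1,\infty)$ space. Approximating $\mathbf{1}_A$ by a smooth family $f_{\varepsilon}:\mathbb{S}_1^n \to [0,1]$ converts the hypothesis $\mu^{+}(A) \leq \mathcal{I}_{\gamma}(\mu(A)) + \delta$ into a Bobkov deficit bound
\[
D(f_{\varepsilon}) := \int \sqrt{\mathcal{I}_{\gamma}(f_{\varepsilon})^{2} + |\nabla f_{\varepsilon}|^{2}}\, d\mu \, - \, \mathcal{I}_{\gamma}\!\left(\int f_{\varepsilon}\, d\mu\right) \leq \delta + o_{\varepsilon}(1).
\]

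The second step is to propagate the deficit along the heat semigroup $(P_t)_{t\geq 0}$ of $\mathbb{S}_1^n$. The Bakry--Ledoux interpolation shows that $t \mapsto D(P_t f_{\varepsilon})$ is non-increasing, and the derivative produces an explicit non-negative remainder which, as in \cite{M-N1}, quantifies the failure of $\nabla P_t f_{\varepsilon}$ to be a \emph{spatially constant} vector multiple of $\mathcal{I}_{\gamma}(P_t f_{\varepsilon})$. Integrating in time yields an $L^{2}$-type control of the form
\[
\int_{0}^{T}\!\! \int_{\mathbb{S}_1^n} \bigg|\frac{\nabla P_s f_{\varepsilon}}{\mathcal{I}_{\gamma}(P_s f_{\varepsilon})} - \mathbf{v}(s)\bigg|^{2} \mathcal{I}_{\gamma}(P_s f_{\varepsilon})^{3}\, d\mu\, ds \; \lesssim \; \delta,
\]
for some time-dependent direction $\mathbf{v}(s)$. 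At a carefully chosen time $t^{*} \asymp \log\log\delta^{-1}$ this forces $P_{t^{*}} f_{\varepsilon}$ to be $L^{2}$-close to a function of the form $x \mapsto \Phi\bigl(\langle x, v\rangle + c\bigr)$, which on $\mathbb{S}_1^n$ is a smoothed indicator of a spherical cap $H$. Transferring this approximation back to $t=0$ by $L^{2}$ contractivity of $P_t$, and optimising $t^{*}$ against the two competing losses, delivers the rate $|\log\delta|^{-c}$ with $c$ just below $1/2$.

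The principal obstacle, and the source of the $1/n$ correction in $\delta_n = \max(\delta, 1/n)$, is that Mossel--Neeman exploit the exact commutation $\nabla P_t = e^{-t} P_t \nabla$ of the Ornstein--Uhlenbeck semigroup, which rests on the fact that linear functions are eigenfunctions of the generator with eigenvalue exactly $1$. On $\mathbb{S}_1^n$ the analogous first non-trivial eigenvalue of the Laplace--Beltrami operator equals $n/(n-1) = 1 + O(1/n)$, so the intertwining holds only up to an error of order $1/n$ per unit of time. This $O(1/n)$ commutator defect is the minimal deficit that the semigroup proof can detect on the sphere, below which the one-dimensional Gaussian profile becomes indistinguishable from a spherical cap at the level of the monotone functional $D$, which is precisely why one has to work with $\delta_n$ rather than $\delta$. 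The technical heart of the proof would therefore be a quantitative and dimension-free version of the commutation and monotonicity identities on $\mathbb{S}_1^n$, tracking the $O(1/n)$ error terms uniformly in $n$ so that the constant implicit in the $O(|\log\delta_n|^{-c})$ estimate does not degenerate; once this uniform spherical version of the Bakry--Ledoux--Mossel--Neeman machinery is in place, the extraction of the cap $H$ and the bound on $\mu(A \Delta H)$ proceed essentially verbatim.
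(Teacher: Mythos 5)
Your overall strategy --- run the Bakry--Ledoux interpolation for Bobkov's inequality on $\mathbb{S}_1^n$, read off from the time-derivative of the functional that $\Phi^{-1}\circ P_tf$ must be nearly linear, and round to a spherical cap --- is indeed the route the paper takes. But two steps of your outline have genuine gaps. First, your time-reversal fails as stated: you work at a large time $t^{*}\asymp\log\log\delta^{-1}$ and then propose to ``transfer the approximation back to $t=0$ by $L^{2}$ contractivity of $P_t$''. Contractivity goes the wrong way: from $\|P_{t^{*}}f-P_{t^{*}}g\|$ small one cannot conclude $\|f-g\|$ small. Mossel and Neeman get around this with spectral estimates tailored to the Ornstein--Uhlenbeck semigroup; the paper deliberately avoids that step by working at a \emph{small} time $t=|\log\delta|^{-2c}$ and invoking the reverse Poincar\'e inequality $\|f-P_tf\|_{1}\leq\sqrt{2t}\,\|\sqrt{\Gamma(f)}\|_{1}$, which is precisely what makes the argument portable to the sphere. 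Second, you never address the degeneracy of the weight $\mathcal{I}_{\gamma}(P_tf)$ in the derivative of the Bobkov functional on the regions where $P_tf$ is near $0$ or $1$. This is the main technical obstruction, and the paper resolves it by a second-order reverse estimate of the form $(\Gamma_2-\Gamma)(\Phi^{-1}(P_tf))\leq C\,t^{-4}(\Phi^{-1}(P_tf))^{2}$ on $\{P_tf\leq\varepsilon\}$, proven in the appendix via dimension-free short-time bounds on the first and second derivatives of the spherical heat kernel; combined with Gaussian concentration for the $(2t)^{-1/2}$-Lipschitz function $\Phi^{-1}(P_tf)$, this controls the truncated regions after choosing $\varepsilon=\delta^{1/2}$.

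Two further points of divergence are worth flagging. The mechanism converting ``$\int(\Gamma_2-\Gamma)(h_t)\,d\mu$ small'' into ``$h_t$ close to linear'' is, in the paper, a second-order Poincar\'e inequality on the sphere coming from the spherical-harmonic spectrum ($(\lambda_k-1)^{2}\geq1$ for $k\geq2$); your formulation with the time-dependent direction $\mathbf{v}(s)$ gestures at this but does not supply the inequality. And your diagnosis of the $1/n$ as an intertwining defect between $\nabla$ and $P_t$ is not what the paper uses: no exact commutation is needed, only the $CD(1,\infty)$ sub-commutation $\Gamma(P_tf)\leq e^{-2t}P_t\Gamma(f)$, which holds exactly on $\mathbb{S}_1^n$. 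The $1/n$ enters only through the profile comparison $\mathcal{I}_{(\mathbb{S}_1^n,g_1,\mu)}-\mathcal{I}_{\gamma}=O(n^{-1})$, and the uniformity in $n$ of the constants comes from the ball-volume estimate $\mu(B(\cdot,\sqrt{t}))\approx\gamma_1([0,\sqrt{t}])$ in the heat kernel bounds, not from eigenvalue asymptotics.
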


As we shall show in the last section, this theorem implies some results about the spherical deficit isoperimetry himself. Since the scheme of proof is of abstract flavor, we will also discuss the case of log-concave probability measures. 

\medskip

This paper is organized as follows. In the next section we briefly recall the general setting in which we will work. In Section~\ref{sec:infdim} we make some comments about the infinite dimensional setting. Then we recall the functional and geometric inequalities. In Section~\ref{sec:stein} we prove Theorem \ref{infObata}. In Section~\ref{sec:rigi} we show how to deduce a rigidity statement for the isoperimetric problem and Theorem \ref{frmor}. Then the last two sections are devoted to the deficit problem in the Bakry--Ledoux comparison Theorem and the proof of Theorem \ref{thmdefsph}.

\section{General Markov framework}

This section aims at presenting the abstract framework that is subsequent to the synthetic notion due to Bakry--\'Emery \cite{B-E} of Curvature--Dimension condition. For a comprehensive introduction to the topic, we refer the reader to \cite{B-G-L}, or \cite{Le1} for a shorter - but nonetheless complete - overview. Recall that curvature and dimension are linked in a Riemmanian setting by Lichnerowicz's minoration of the first eigenvalue of the Laplace operator. In a complete Riemannian manifold, the Bochner's formula expresses that
\[
\frac{1}{2} \Delta (|\nabla f|^2) - \nabla \Delta f \cdot \nabla f = \mathrm{Ric}_g (\nabla f, \nabla f) + \| \nabla^2(f) \|_{HS}^2.
\]
If the manifold is $n$ dimensional, Schwarz's inequality ensures that $ \|\nabla^2(f) \|_{HS}^2 \geq \frac{1}{n} (\Delta f)^2$. It implies by a standard integration by parts Lichnerowicz' minoration of the first eigenvalue of the Laplace Beltrami operator. The Bochner's formula is a key observation that led the authors of \cite{B-E} to define a generalized curvature-dimension criterion $CD(\kappa, n)$, $\kappa \geq 0, n \in [0, \infty]$ for a large class of spaces. This criterion has proven to be very useful to reach sharp geometric and functional inequalities. We briefly recall a few basic definitions. 

\vskip 2mm

Let $(E, \mu)$ be a probability space equipped with $L$ a diffusion operator acting on a domain $\mathcal{A}$, that is such that for each $\psi \, : \R^k \to \R$ with $\psi(0) =0$, and $f_1, \ldots, f_k, \in \mathcal{A},$
\[
L \psi(f_1, \ldots, f_k) = \sum_{i=1}^k \partial_i \psi (f_1, \ldots, f_k) L f_i +  \sum_{1 \leq i,j \leq k} \partial_{ij}^2 \psi (f_1, \ldots, f_k) \Gamma(f_i, f_j).   
\]

A diffusion $L$ is generating a semigroup - that is a family of operators $(P_t)_{t \geq 0}$ such that $P_0 = Id$ and $P_{t+s} = P_t \circ P_s$ - by taking $(P_t   := e^{tL}))_{t \geq 0}$. These semigroups $(P_t)_{t \geq 0}$ are Markov that is 
\[
\forall t \geq 0, \, P_t {\bf 1} = {\bf 1},
\]
where $\bf 1$ is the constant function defined on $E$ equal to $1$. We shall assume moreover that the measures $\mu$ are reversible with respect to $L$ or $(P_t)_{t \geq 0}$ that is
\[
\forall f, g \in L^2(\mu), \, \int_{E} f Lg \, d\mu = \int_{E} g Lf \, d\mu,
\]
and invariants with respect to $(P_t)_{t \geq 0}$ that is
\[
\forall f \in L^1(\mu), \, \int_{E} P_t f  \, d\mu = \int_{E} f \, d\mu.
\]
Another important property is the ergodicity of these semigroups that is 
\[
\lim_{s \to \infty} P_s f  = \int_E f d\mu,
\] 
for every function $f$ (say integrable with respect to $\mu$).

The Laplace Beltrami operator over a Riemmanian manifold is a relevant example of diffusion, which generates the heat semigroup $(e^{t \Delta})_{t \geq 0}$. Following Paul Andr\'e Meyer, Bakry and Emery define the ``carr\'e du champ'' operator $\Gamma$ ($\Gamma_1$ below) and its iterations $(\Gamma_n)_{n \geq 0}$ by setting $\Gamma_0(f,g) = fg$ and then by induction
\[
\forall n \geq 1, \quad \Gamma_n(f,g) = \frac{1}{2} \bigg( L(\Gamma_{n-1}(f,g)) - \Gamma_{n-1}(f, Lg) -  \Gamma_{n-1}(g, Lf) \bigg).
\] 
The diffusion property implies the following chain rule formulas
\[
\Gamma(u(f)) = u'(f)^2 \Gamma (f), \qquad \Gamma_2(u(f)) = u'(f)^2 \Gamma_2(f) + u'(f) u''(f) \Gamma(f, \Gamma(f)) + u''(f)^2 \Gamma(f)^2.
\]
When $(M, g,  \mathrm{vol})$ is a Riemannian manifold and the diffusion is the Laplace Beltrami operator $\Delta$, the carr\'e du champ $\Gamma(f)$ is equal to $| \nabla f |^2$, and Bochner's formula can then be written as 
\[
\Gamma_2 (f) =  \mathrm{Ric}_g (\nabla f, \nabla f) + \| \nabla^2 f \|_{HS}^2.
\]
As a consequence, if the manifold is $n$ dimensional with Ricci curvature bounded below by $\kappa$, the Bochner's formula implies 
\[
\Gamma_2 (f) \geq \kappa \Gamma (f) + \frac{1}{n} (L f)^2, 
\]
which is a good formulation for a general \emph{curvature-dimension} definition. 
\begin{definition}
Say that $(E, \mu, \Gamma)$ satisfy the curvature-dimension criterion $CD(\kappa, n)$ if 
\[
\forall f \in \mathcal{A}, \, \Gamma_2 (f) \geq \kappa \Gamma (f) + \frac{1}{n} (L f)^2. 
\]
\end{definition}

Obviously, it is clear that $CD(\kappa, n) \Rightarrow CD(\kappa', n)$ for $\kappa' \leq \kappa$ and $CD(\kappa, n) \Rightarrow CD(\kappa, m)$ for $n \leq m$. The curvature-dimension condition $CD(\kappa,n)$ is therefore a condition of curvature bounded from below  by $\kappa$ and dimension bounded from above by $n$.

If $n \in \N$ and $\kappa >0$, the model spaces for this condition are naturally given by the Euclidean Spheres $\mathbb{S}^n$ of radius $\sqrt{\frac{n-1}{\kappa}}$,  
and moreover Bochner's formula indicates that $\Gamma_2 (f) = \kappa \Gamma (f) + \| \nabla^2(f) \|_{HS}^2$. 

More generally one can take weighted Riemannian manifold, which is Riemannian manifold with density $d\mu = e^{-V} d\mathrm{vol}$ with the associated diffusion operator $L = \Delta - \nabla V \cdot \nabla$, called the Witten-Laplacian. Then the ``carr\'e du champ'' $\Gamma(f)$ is still given by $|\nabla f|^2$ and by the Bochner's formula, the $CD(\kappa, \infty)$ condition $ \Gamma_2 (f) \geq \kappa \Gamma (f)$ turns out to be true whenever $\nabla^2 V + \mathrm{Ric}_g \geq \kappa$ (uniformly as symmetric tensors). This tensor is called \textit{Bakry-Emery tensor}. 

The Gaussian space $(\R^n, \gamma_n)$ , $n\geq 1$, is an example of a space with infinite dimension and curvature $1$ (as immediately checked). Therefore, in this definition, the dimension do not necessary coincide with the topological dimension. A good explanation of this fact is given by the Poincar\'e observation. Recall indeed that the Gaussian space can be seen as limit of $n$-dimensional spheres of radius $\sqrt{n-1}$, that is of objects of class $CD(1,n)$. This is also a good explanation of the fact that the geometric and functional inequalities in Gaussian space are \textit{dimension free}.  

The underlying semigroup in Gaussian space $(Q_t)_{t \geq 0}$ is the Ornstein-Uhlenbeck semigroup. It acts on functions $f \, : \, \R^n \to \R$ of the Dirichlet domain, by
\[
x \in \R^n \mapsto  Q_t f(x) = \int_{\R^n} f(e^{-t} x + \sqrt{1 - e^{-2t}}y) d\gamma (y) =  \int_{\R^n} f(y) q_t(x,y) d\gamma (y),
\]
with  $q_t$ the Mehler kernel associated to the Ornstein-Uhlenbeck semigroup.

Since $\R^n$ is flat, $(\R^n, \mu(dx) =e^{-V(x)}dx)$ satisfies $CD(\kappa, \infty)$ whenever $\nabla^2 V \geq \kappa$, which corresponds to the class of strictly log-concave measures. Indeed, the iterated ``carr\'e du champ'' $\Gamma_2$ takes the following form : 
\[
\Gamma_2 (f) = \nabla^2 V (\nabla f, \nabla f) + \| \nabla^2 f \|_{HS}^2.
\]

Of course, this curvature dimension criterion in its abstract formulation covers a wider setting than Riemannian manifolds. One still need a diffusion operator, hence some smooth structure.

The principal strength of this abstract formulation is that it has been proven to be a very efficient criterion to established sharp geometric and functional inequalities. In this context, say that $(E, \mu, \Gamma)$ satisfies $SG (\lambda)$ (spectral gap, or Poincar\'e, inequality with constant $\lambda$) if $\lambda$ is the best (i.e. largest) positive constant such that for all $f \in H^2(M)$,
\[
\mathrm{Var}_{\mu} (f) \leq \frac{1}{\lambda} \int_{M} | \nabla f |^2 d\mu.
\]
In the same manner, say that $(E, \mu, \Gamma)$ satisfies $LS (\rho)$ (Sobolev logarithmic inequality with constant $\rho$) if $\rho$ is the best (i.e. largest) positive constant such that for all $f \in H^2(M)$,
\[
\mathrm{Ent}_{\mu} (f^2)   \leq \frac{2}{\rho} \int_{M} | \nabla f |^2 d\mu.
\]

In \cite{B-E}, the authors show that the $CD(\kappa, n)$-condition implies sharp Sobolev logarithmic inequalities, i.e with $\rho = \frac{n \kappa}{n-1}$ ($= \kappa$ if $n = \infty$), equivalent to the important property of hypercontractivity of the underlying semigroups. We confer to reader to \cite{Ba}, \cite{B-G-L} for the analogous dimensional Sobolev inequalities in the case $n < \infty$. 

In the case $n = \infty$, referred as infinite dimensional setting, a common scheme of proof to established these inequalities is to interpolate along the corresponding semigroups and to show monotony along these semigroups. We will illustrate the later in Section~\ref{sec:geofun}. It is therefore well suited to investigate cases of equality. Before doing so, in the next section, we discuss the analogous rigidity statements in the case of weighted Riemannian manifolds. 

\section{The infinite dimension setting} \label{sec:infdim}

\vspace{2mm}

We now focus on the infinite dimensional setting corresponding to the $CD(\kappa, \infty)$ condition. Recall that in this context, when $\kappa >0$, the model space is the Gaussian space $(\R^n, \gamma_{n,\kappa})$, where $\gamma_{n,\kappa}  = \gamma_{\kappa}^{\otimes n}$, $n \geq 1$ and that it satisfies $LS(\kappa)$ and $SG(\kappa)$ for all $n \geq 1$. Theses important inequalities has been established well before \cite{B-E}, in the early XXth century for the spectral gap inequality and in the 70's for Sobolev logarithmic inequalities.

An analogous of Obata's theorem is the following statement : if a weighted manifold $(M, g, \mu)$ have the optimal constant $\kappa$ for the spectral gap inequality, then the manifold splits as $(M, g, \mu) = (\R, | \, \, |, \gamma_{\kappa}) \times (M', g', \mu')$. As $\rho \geq \theta \kappa + (1 - \theta) \lambda$ for some $\theta \in (0, 1)$ the same conclusion would hold for the Sobolev logarithmic inequality. We will prove this conclusion whenever there is non constant extremal functions for this inequalities. Since equality for the spectral gap inequality is attain for eigenfunctions of diffusion operator $L = \Delta - \nabla \psi \cdot \nabla$ associated to the eignenvalue $\kappa$, this implies the statement.  
Furthermore, we can establish characterization of these extremal functions. We note that this conclusion overlaps with the work of Bentaleb \cite{Ben}, who uses very similar tools as ours (i.e. interpolations along heat flows). For the reader's convenience, we will still give the complete proofs in the next sections. 

\vspace{2mm}

Concerning the isoperimetric problem, a rigidity theorem for the Bakry--Ledoux's comparison theorem has been established by Franck Morgan for manifolds with densities. The proof relies on deep results from geometric measure theory which ensure that in such setting there is always a regular set that minimize the isoperimetric problem. Note that for the real line, Bobkov \cite{Bo1} has proved that minimizers exist and are half-lines in the weaker class of log concave measures (not strictly and without regularity assumptions on the potential). We will make the more general observation that if there is a non trivial minimizer in an abstract Markov triple, then some kind of rigidity occurs. In particular, we recover Morgan's theorem for isoperimetric problem with characterization of the optimal sets in the setting of a weighted Riemannian Manifold. 

The starting point is a functional form of the Gaussian isoperimetry. In a remarkable work \cite{Bo2}, Bobkov showed that the Gaussian isoperimetry is equivalent to the following functional inequality
\begin{equation} \label{eq.Bobkov} 
\mathcal{I}_{\gamma} \bigg(\int_{\R^n} f d\gamma_n \bigg) \leq 
\int_{\R^n} \sqrt{ \mathcal{I}_{\gamma}^2 (f) + |\nabla f |^2 } d\gamma_n
\end{equation}
for each function $f$ : $\R^n \to [0,1]$ locally Lipschitz. Using this inequality to a sequence of Lipschitz functions $(f_{\varepsilon})_{\varepsilon \geq 0}$ approaching $\mathbf{1}_A$ as $\varepsilon$ goes to $0$ implies back $I_{\gamma} (\gamma(A)) \leq \gamma^{+}(A)$.
Bobkov's original proof is build in three steps. Firstly, one proves the inequality on the two point space $\{0,1\}$, the using tensorization property of the inequality one establishes it on the discrete cubes $\{0,1\}^n$, $n \geq 1$ before proving it on the Gaussian space by means of a central limit argument.

Noticing that by ergodicity, $\int_{\R^n} f d\gamma_n  = \lim_{t \to \infty} Q_t f$, and that $f = Q_0 f$, Bakry and Ledoux's idea \cite{B-L1} is to consider the function 
\[
\psi \, :  s \in [0, \infty ) \mapsto \int_{\R^n} \sqrt{ \mathcal{I}_{\gamma}^2 (Q_s f) + |\nabla Q_s f |^2 } d\gamma_n
\]
and showing that it is non increasing, recovering therefore Bobkov inequality. 

A main advantage of this proof and its tools is that it can be easily adapted on a Markov triple $(E, \mu, \Gamma)$ with the condition $CD(\kappa, \infty)$ described in the previous section. The result is the following 
\begin{equation} \label{eq.BobBL} 
\mathcal{I}_{\gamma} \bigg(\int_E f d\mu \bigg) \leq 
\int_E \sqrt{\mathcal{I}_{\gamma}^2 (f) + \frac{1}{\kappa^2} \Gamma (f)} d\mu
\end{equation}
for each function $f$ : $E \to [0,1]$ locally Lipschitz (in an appropriate sense). Since for weighted Riemannian manifolds, $\Gamma(f) = | \nabla f |^2$ and $\mu^+(A) = \lim_{k \to \infty} \|\nabla f_k \|_{L^1(\mu)}$ for a sequence of functions $(f_k)_{k \geq 0}$ approaching $\bf{1}_A$, it yields the comparison theorem announced in the introduction.

To close the picture over a Riemmanian manifold $(M, g, \mu = e^{-V} d\mathrm{vol})$, Barthe--Maurey \cite{B-M} have shown the equivalence between a comparison theorem over isoperimetric profiles 
\[
\forall A \subset M, \, \mu^{+}(A) \geq \kappa \mathcal{I}_{\gamma} (\mu(A)) 
\]
and the Bobkov inequality \eqref{eq.BobBL} for all Lipschitz functions mapping to $[0,1]$. For the further purposes, let us discuss the case of the Euclidean Spheres $\mathbb{S}^n$. Standard arguments (see e.g. \cite{B-M}) imply that 
\[
\inf_{a \in (0,1)} \frac{\mathcal{I}_{(\mathbb{S}^n,g,\mu) }(a)}{\mathcal{I}_{\gamma}( a)} = \frac{\mathcal{I}_{(\mathbb{S}^n,g, \mu)} (1/2)}{\mathcal{I}_{\gamma}(1/2)} =  \sqrt{2\pi} \frac{\mathrm{vol}_{n-1}(\mathbb{S}_{n-1})}{\mathrm{vol}_n(\mathbb{S}_n)} = \sqrt{2} \frac{\Gamma(\frac{n+1}{2})}{\Gamma(\frac{n}{2})} = c_n,
\]
and the latter is always larger than $\sqrt{n-1}$, and therefore always less than $\sqrt{n}$. This can be seen as a consequence of log-convexity of the Gamma function.  \footnote{Indeed by log-convexity of the Gamma function, $\frac{n-1}{2}\Gamma(\frac{n}{2})^2 \leq \frac{n-1}{2}\Gamma(\frac{n-1}{2}) \Gamma(\frac{n+1}{2}) =\Gamma(\frac{n+1}{2})^2$, which is equivalent to the claim.}
The optimal Bobkov inequality can therefore be written as
\[
\mathcal{I}_{\gamma} \bigg(\int_{\mathbb{S}^n} f d\mu \bigg) \leq 
\int_{\mathbb{S}^n} \sqrt{ \mathcal{I}_{\gamma}^2 (f) + \frac{1}{c_n^2}|\nabla f |^2 } d\mu.
\]

As we will see below, the isoperimetric problem admits a minimizer for every manifold with density. Towards rigidity, whenever $\mathcal{I}_{(M,g, \mu)} = \mathcal{I}_{\gamma_{\kappa}}$, this will imply that there is always a non constant extremal function for the Bobkov's inequality. This in turn implies the splitting theorem of Franck Morgan.

\section{Geometric and functional inequalities} \label{sec:geofun}

In this section, we recall how to reach sharp geometric and functional inequalities in the abstract framework of $CD(\kappa, \infty)$ spaces described in Section $2$. The starting point is that the curvature dimension criterion of Bakry--Emery implies a commutation between $\Gamma$ and the semigroup $(P_t)_{t \geq 0}$ in the following sense.  

\begin{lemma}
Under the $CD(\kappa, \infty)$ condition, $\kappa \in \R,$ we have for all $t \geq 0$,
\begin{equation}  \label{eq.comm}
\Gamma(P_t f) \leq e^{-2\kappa t} P_t (\Gamma(f)).
\end{equation}
\end{lemma}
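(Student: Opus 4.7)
The plan is to prove the commutation inequality by the classical semigroup interpolation argument of Bakry--Émery. For a fixed $t > 0$ and a function $f$ in the underlying algebra $\mathcal{A}$, I introduce the interpolation
\[
\phi(s) = P_s \bigl( \Gamma(P_{t-s} f) \bigr), \qquad s \in [0,t].
\]
The boundary values immediately give what we want to compare: $\phi(0) = \Gamma(P_t f)$, while by invariance of the semigroup on the other end $\phi(t) = P_t(\Gamma(f))$. So the inequality will follow from a differential inequality satisfied by $\phi$ on $(0,t)$.

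Next I would differentiate $\phi$ in $s$. Writing $g_s = P_{t-s} f$ so that $\partial_s g_s = -L g_s$, the product-type formula and the linearity of $P_s$ give
\[
\phi'(s) = P_s \bigl( L \Gamma(g_s) \bigr) + P_s \bigl( 2 \Gamma(g_s, \partial_s g_s) \bigr) = P_s \bigl( L\Gamma(g_s) - 2\Gamma(g_s, L g_s) \bigr).
\]
Recognising the bracket as exactly $2\Gamma_2(g_s)$ by the very definition of the iterated carré du champ in Section~2, one obtains the clean identity
\[
\phi'(s) = 2\, P_s \bigl( \Gamma_2(g_s) \bigr).
\]
This step is where the positivity of the semigroup is essential, since it lets one transfer a pointwise $\Gamma_2$-bound into a bound on $\phi'$.

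Applying now the $CD(\kappa,\infty)$ hypothesis pointwise, $\Gamma_2(g_s) \geq \kappa\, \Gamma(g_s)$, and using once more that $P_s$ preserves inequalities, I get the Grönwall-type differential inequality
\[
\phi'(s) \geq 2\kappa\, P_s(\Gamma(g_s)) = 2\kappa\, \phi(s).
\]
Integrating between $0$ and $t$ yields $\phi(t) \geq e^{2\kappa t} \phi(0)$, which is exactly the desired commutation $\Gamma(P_t f) \leq e^{-2\kappa t} P_t(\Gamma(f))$.

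The only genuine obstacle is the analytic one of justifying the manipulations (differentiation under $P_s$, exchanging $L$ with integrals, smoothness of $s \mapsto P_s h$) on a sufficiently rich class of functions. In the abstract Markov triple framework recalled in Section~2 one works on the standard algebra $\mathcal{A}$ on which the heat semigroup acts nicely, and then extends the resulting inequality to a larger class (e.g. Lipschitz functions) by density and by approximation of $\Gamma$ through its quadratic form, as is detailed in \cite{B-G-L}. I would simply invoke this, rather than rewriting the technical regularisation step.
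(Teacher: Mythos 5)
Your proposal is correct and is essentially the paper's own argument: the paper interpolates with $\psi_t(s) = e^{-2\kappa s} P_s(\Gamma(P_{t-s}f))$ and shows it is non-decreasing, which is exactly your computation with the exponential factor absorbed into the interpolant rather than introduced at the end via Gr\"onwall. The identification of the derivative with $2P_s(\Gamma_2(P_{t-s}f))$ and the appeal to $CD(\kappa,\infty)$ plus positivity of $P_s$ are the same in both.
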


The proof of this lemma is very simple. 
\begin{proof}
Define $\psi_t \, : \, s \mapsto e^{-2\kappa s} P_s (\Gamma(P_{t-s} f))$. Then using the semigroup property and the fact that $\Gamma$ is bilinear, 
\[
\psi'_t(s) = e^{-2 \kappa s} [P_s (2\Gamma_2 (P_{t-s} f) - 2 \kappa P_s (\Gamma(P_{t-s} f))] = 2e^{-2 \kappa s} [P_s (\Gamma_2 (P_{t-s} f) - \kappa \Gamma(P_{t-s} f))] \geq 0,
\] 
by the $CD(\kappa, \infty)$ condition. But the statement of the lemma reads as $\psi_t(0) \leq \psi_t(t)$.
\end{proof}
For the further purposes, the proof implies that there is equality in \eqref{eq.comm} if and only if for all $s \in (0,t)$, $\Gamma_2 (P_{t-s} f) = \kappa \Gamma (P_{t-s} f)$ so that taking the limit $s \to t$, $\Gamma_2 (f) = \kappa \Gamma (f)$.

It is relevant to point out that in the case of the heat semigroup $(P_t)_{t \geq 0}$ on the Euclidean spaces and the Ornstein--Uhlenbeck semigroup $(Q_t)_{t \geq 0}$ on the Gaussian space we have an exact commutation, which follows immediately from the explicit formulas of such semigroups. That is, for smooth functions $f$ of the respective Dirichlet domains, $\nabla P_t f= P_t \nabla f$ and $\nabla Q_t f = e^{-t} Q_t \nabla f$.

\subsection{Spectral gap and Sobolev logarithmic inequalities : direct and reverse forms.}

Using the above lemma, we reach the following propositions (see e.g. \cite{Le1}, \cite{B-G-L} - actually for the second one, the \textit{a priori} stronger, but in fact equivalent, property $\sqrt{\Gamma(P_t f)} \leq e^{-\kappa t} P_t (\sqrt{\Gamma(f)})$ is needed).  

\begin{proposition}
Let $(E, \mu, \Gamma)$ be a probability space satisfying $CD(\kappa, \infty)$.  Then,
\[
\forall f \in \mathcal{A}, \quad C(\kappa, t) \Gamma(P_t f)   \leq P_t (f^2) - (P_t f)^2  \leq  D(\kappa, t) P_t (\Gamma(f)), 
\]  
where $C(\kappa, t) = 2\int_0^t e^{2\kappa s} ds$ and $D(\kappa, t) = 2\int_0^t e^{-2\kappa u} du$.  In particular, if $\kappa >0$, $(E, \mu, \Gamma)$ satisfies $SG(\kappa)$.
\end{proposition}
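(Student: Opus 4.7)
The plan is to run the standard ``interpolation along the semigroup'' argument, using the commutation lemma just proved as the only non-trivial input. The key object is the function
\[
\phi(s) = P_s\bigl((P_{t-s}f)^2\bigr), \qquad s \in [0,t],
\]
which interpolates between $\phi(0) = (P_t f)^2$ and $\phi(t) = P_t(f^2)$, so that the quantity $P_t(f^2) - (P_t f)^2$ appearing in the inequality becomes $\int_0^t \phi'(s)\,ds$.

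First I would compute $\phi'$. Using the semigroup identities $\partial_s P_s = L P_s = P_s L$ and the chain rule for $s \mapsto (P_{t-s}f)^2$, the two pieces combine via the very definition $2\Gamma(g) = L(g^2) - 2 g\,Lg$ to give the clean formula
\[
\phi'(s) = 2\, P_s\bigl(\Gamma(P_{t-s}f)\bigr).
\]
Integration then yields the ``variance along the flow'' identity
\[
P_t(f^2) - (P_t f)^2 \;=\; 2\int_0^t P_s\bigl(\Gamma(P_{t-s}f)\bigr)\, ds.
\]

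Next I would plug in the commutation \eqref{eq.comm} in its two directions. For the upper bound, applying $\Gamma(P_{t-s}f) \leq e^{-2\kappa(t-s)} P_{t-s}(\Gamma(f))$ and using the semigroup property $P_s \circ P_{t-s} = P_t$ gives
\[
P_t(f^2) - (P_t f)^2 \;\leq\; \Bigl(2\int_0^t e^{-2\kappa(t-s)}\,ds\Bigr) P_t(\Gamma(f)) \;=\; D(\kappa,t)\, P_t(\Gamma(f)),
\]
after the change of variables $u = t-s$. For the lower bound, the lemma rewritten as $P_s(\Gamma(g)) \geq e^{2\kappa s}\Gamma(P_s g)$ applied to $g = P_{t-s}f$ yields $P_s(\Gamma(P_{t-s}f)) \geq e^{2\kappa s} \Gamma(P_t f)$, and integrating produces $C(\kappa,t)\,\Gamma(P_t f)$ as the lower bound.

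Finally, for the $SG(\kappa)$ claim when $\kappa>0$, I would let $t \to \infty$ in the right-hand inequality. By ergodicity $P_t(f^2) \to \int f^2\,d\mu$ and $P_t f \to \int f\,d\mu$, so the left-hand side tends to $\mathrm{Var}_\mu(f)$; on the right, $D(\kappa,t) = \kappa^{-1}(1-e^{-2\kappa t}) \to \kappa^{-1}$ and $P_t(\Gamma(f)) \to \int \Gamma(f)\,d\mu$, giving the Poincaré inequality with constant $\kappa$. The only real subtlety I expect is purely technical: justifying that $f$ and $\Gamma(f)$ are regular enough (in the algebra $\mathcal{A}$, or in a suitable $L^2$ sense) for $\phi$ to be $C^1$ on $[0,t]$ and for the ergodic limits to pass under the semigroup, but within the Markov triple framework of Section~2 these are standard density arguments rather than genuine obstacles.
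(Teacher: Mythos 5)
Your proof is correct and is precisely the standard semigroup interpolation argument that the paper invokes by reference (to Ledoux and Bakry--Gentil--Ledoux) rather than writing out: the identity $\phi'(s)=2P_s(\Gamma(P_{t-s}f))$ combined with the commutation lemma used in both directions, followed by the ergodic limit $t\to\infty$ for the spectral gap. No gaps beyond the routine domain/regularity caveats you already flag.
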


\begin{proposition}
Let $(E, \mu, \Gamma)$ be a probability space satisfying $CD(\kappa, \infty)$. Then, 
\[
\forall f \in \mathcal{A}, \quad C(\kappa, t) \frac{\Gamma(P_t f)}{P_t f}   \leq P_t (f \log f) - P_t f \log P_t f  \leq  D(\kappa, t)P_t \bigg( \frac{\Gamma(f)}{f} \bigg), 
\]  
where again $C(\kappa, t) = 2\int_0^t e^{2\kappa s} ds $ and $D(\kappa, t) = 2\int_0^t e^{-2\kappa s} ds.$  In particular, if $\kappa >0$, $(E, \mu, \Gamma)$ satisfies $LS(\kappa)$.
\end{proposition}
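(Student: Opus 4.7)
The plan is to prove this local log-Sobolev inequality by the same interpolation scheme that was used for the local Poincaré bound, but working with the entropy functional $x \mapsto x\log x$ instead of $x \mapsto x^2$. Fix $t > 0$ and $f \in \mathcal{A}$ (positive, bounded away from $0$ to avoid integrability issues) and set
\[
\Lambda(s) \;=\; P_s\bigl((P_{t-s}f)\log(P_{t-s}f)\bigr), \qquad s \in [0,t],
\]
so that $\Lambda(0) = (P_tf)\log(P_tf)$ and $\Lambda(t) = P_t(f\log f)$, and the quantity we want to control is $\int_0^t \Lambda'(s)\,ds$. Writing $u_s = P_{t-s}f$ and using $\partial_s u_s = -Lu_s$ together with the diffusion identity $L(u\log u) = (1+\log u)\,Lu + \Gamma(u)/u$, the $Lu_s$--terms cancel and one finds
\[
\Lambda'(s) \;=\; P_s\!\left(\frac{\Gamma(P_{t-s}f)}{P_{t-s}f}\right).
\]
This is the common starting point for both bounds.

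For the upper bound I would combine the strengthened commutation $\sqrt{\Gamma(P_r g)} \le e^{-\kappa r}P_r(\sqrt{\Gamma(g)})$ (noted in the excerpt as a consequence of $CD(\kappa,\infty)$) with the Cauchy--Schwarz inequality for $P_r$ applied to $\sqrt{\Gamma(g)/g}\cdot\sqrt{g}$, namely
\[
\bigl(P_r\sqrt{\Gamma(g)}\bigr)^2 \;\le\; P_r\!\left(\frac{\Gamma(g)}{g}\right) P_r(g).
\]
Taking $r=t-s$, $g=f$, these two estimates combine into
\[
\frac{\Gamma(P_{t-s}f)}{P_{t-s}f} \;\le\; e^{-2\kappa(t-s)}\,P_{t-s}\!\left(\frac{\Gamma(f)}{f}\right),
\]
so that applying $P_s$, using the semigroup property $P_s P_{t-s} = P_t$, and integrating over $s \in [0,t]$ yields the claimed upper bound (with the stated exponential weight).

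For the lower bound I would apply the strengthened commutation in the opposite direction: $\sqrt{\Gamma(P_tf)} = \sqrt{\Gamma(P_s(P_{t-s}f))} \le e^{-\kappa s} P_s(\sqrt{\Gamma(P_{t-s}f)})$. Combined with Cauchy--Schwarz for $P_s$ against $\sqrt{P_{t-s}f}$, this gives
\[
e^{2\kappa s}\,\frac{\Gamma(P_tf)}{P_tf} \;\le\; P_s\!\left(\frac{\Gamma(P_{t-s}f)}{P_{t-s}f}\right) \;=\; \Lambda'(s),
\]
and integration over $s \in [0,t]$ produces the lower bound. Finally, for the last assertion, since $\kappa > 0$ the ergodicity of $(P_t)_{t\ge 0}$ lets $t \to \infty$ in the upper estimate (with $D(\kappa,\infty) = 1/\kappa$), and one recovers the global entropy--energy inequality $\mathrm{Ent}_\mu(f) \le \kappa^{-1}\int \Gamma(f)/f\,d\mu$, which is equivalent to $LS(\kappa)$ after the substitution $f \leftarrow f^2$ (using $\Gamma(f^2)/f^2 = 4\Gamma(f)$).

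The main technical obstacle is really the strengthened commutation $\sqrt{\Gamma(P_rf)} \le e^{-\kappa r}P_r(\sqrt{\Gamma(f)})$: the bare commutation \eqref{eq.comm} from the lemma above is not sufficient for either bound because one cannot pull a square root inside $P_r$ without loss. To obtain the strengthened version I would run an auxiliary interpolation $\psi(s) = e^{-\kappa s}P_s(\sqrt{\Gamma(P_{t-s}f)+\varepsilon})$, differentiate, invoke $CD(\kappa,\infty)$ through a pointwise inequality of the type $2\sqrt{\Gamma+\varepsilon}\,(L\sqrt{\Gamma+\varepsilon}) \ge \Gamma_2/\sqrt{\Gamma+\varepsilon}\cdot(\text{correction})$, conclude monotonicity in $s$, and then pass $\varepsilon \to 0$. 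Once that ingredient is in hand, the rest of the argument is routine calculus along the interpolation.
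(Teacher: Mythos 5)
Your proposal is correct and is essentially the paper's own (referenced) route: interpolate $\Lambda(s)=P_s\bigl((P_{t-s}f)\log(P_{t-s}f)\bigr)$, compute $\Lambda'(s)=P_s(\Gamma(P_{t-s}f)/P_{t-s}f)$, and control this via the strong gradient commutation $\sqrt{\Gamma(P_r g)}\le e^{-\kappa r}P_r(\sqrt{\Gamma(g)})$ combined with Cauchy--Schwarz under $P_r$, which is exactly the ingredient the paper flags as necessary before deferring to \cite{Le1}, \cite{B-G-L}. One remark: your computation yields the constants $\int_0^t e^{2\kappa s}\,ds$ and $\int_0^t e^{-2\kappa s}\,ds$ \emph{without} the factor $2$ appearing in the proposition as stated; that factor is a slip in the paper (carried over from the Poincar\'e version), since with it the lower bound would be false and the upper bound would only give $LS(\kappa/2)$ after the substitution $f\mapsto f^2$, whereas your constants are the sharp ones and do yield $LS(\kappa)$.
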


\medskip

Let us discuss the reverse form of these inequalities. The reverse Poincar\'e inequality implies that each bounded (by one) function is $1/\sqrt{C(\kappa,t)}$-Lipschitz in the sense that $\Gamma(P_t f) \leq 1/C(\kappa, t)$. It is also known that it implies the following lemma (see \cite{Le1}).

\begin{lemma} Let $(E, \mu)$ satisfying the $CD(0, \infty)$ condition. Then
\begin{equation} \label{reversesg}
\forall f \in L^1(E), \, \| f - P_t f \|_1 \leq  \sqrt{2t} \| \sqrt{\Gamma(f)}\|_1.
\end{equation}
\end{lemma}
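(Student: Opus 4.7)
The plan is to reduce everything to the reverse Poincar\'e inequality in the Proposition preceding the lemma (applied at $\kappa = 0$), combined with $L^1$--$L^\infty$ duality and the self--adjointness of $(P_s)_{s \geq 0}$.

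First I would exploit duality: $\| f - P_t f\|_1 = \sup_{\|g\|_\infty \leq 1} \int_E (f - P_t f) g \, d\mu$, where the supremum runs over a dense subclass of nice test functions (smooth, bounded by $1$ with $\Gamma(g)$ integrable) and the extension to the full $L^\infty$ ball is obtained at the end by standard density arguments. Using the reversibility of $\mu$ with respect to $(P_s)_{s \geq 0}$, I would rewrite
\[
\int_E (f - P_t f) g \, d\mu = \int_E f (g - P_t g) \, d\mu,
\]
which shifts the semigroup action onto the bounded function $g$ where the reverse Poincar\'e inequality has bite.

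Next, I would differentiate along the semigroup and integrate by parts in space. Writing $g - P_t g = -\int_0^t L P_s g \, ds$ and using $\int_E h L k \, d\mu = -\int_E \Gamma(h, k)\, d\mu$ on the diffusion domain yields
\[
\int_E f (g - P_t g) \, d\mu = \int_0^t \int_E \Gamma(f, P_s g)\, d\mu\, ds.
\]
Applying the Cauchy--Schwarz inequality for $\Gamma$ pointwise then gives $|\Gamma(f, P_s g)| \leq \sqrt{\Gamma(f)}\,\sqrt{\Gamma(P_s g)}$.

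The crucial input is the reverse Poincar\'e inequality under $CD(0,\infty)$, i.e.\ $C(0, s)\Gamma(P_s g) \leq P_s(g^2) - (P_s g)^2$ with $C(0,s) = 2s$. Since $\|g\|_\infty \leq 1$, this gives the pointwise bound $\sqrt{\Gamma(P_s g)} \leq 1/\sqrt{2s}$. Inserting this yields
\[
\left| \int_E (f - P_t f) g \, d\mu \right| \leq \int_0^t \frac{1}{\sqrt{2s}}\, ds \cdot \int_E \sqrt{\Gamma(f)}\, d\mu = \sqrt{2t}\, \|\sqrt{\Gamma(f)}\|_1,
\]
and taking the supremum over $g$ finishes the proof. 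The main (minor) obstacle is the regularization: one must first establish the inequality for $f$ in an algebra of smooth, sufficiently integrable functions with $\sqrt{\Gamma(f)} \in L^1(\mu)$, then extend by density to $L^1(\mu)$ --- if $\|\sqrt{\Gamma(f)}\|_1 = \infty$ the bound is vacuous, and otherwise standard approximation in the Markov triple framework allows one to exchange $\int_0^t$ and $\int_E$ and to pass to the limit on both $f$ and the test function $g$.
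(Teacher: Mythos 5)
Your argument is correct and is precisely the standard proof that the paper omits, deferring instead to \cite{Le1}: duality $\|f-P_tf\|_1=\sup_{\|g\|_\infty\le 1}\int f(g-P_tg)\,d\mu$, the identity $\int f(g-P_tg)\,d\mu=\int_0^t\int\Gamma(f,P_sg)\,d\mu\,ds$, pointwise Cauchy--Schwarz for $\Gamma$, and the reverse Poincar\'e bound $\Gamma(P_sg)\le 1/(2s)$, with $\int_0^t(2s)^{-1/2}ds=\sqrt{2t}$. No gaps beyond the routine regularization issues you already flag.
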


Turning to the reverse Sobolev Logarithmic inequality, it implies that whenever $0 \leq f \leq 1$, 
\[
C(\kappa,t) \Gamma(\log P_t f) \leq -\log (P_t f)
\]
so that $\phi = (-\log P_t f)^{1/2}$ is $\frac{1}{2\sqrt{C(\kappa,t)}}$-Lipschitz in the sense that $\Gamma(\phi) \leq \frac{1}{4C(\kappa,t)}$. 

\subsection{Bakry--Ledoux comparison theorem and reverse isoperimetric inequality}

As discussed in Section~\ref{sec:infdim}, given a Markov triple $(E, \mu, \Gamma)$ with the condition $CD(\kappa, \infty)$, $\kappa >0$, Bakry and Ledoux gave a semigroup proof of Bobkov's inequality. Using this proof, Carlen and Kerce \cite{C-K} has characterized cases of equality. While some of the computations arising are somewhat tedious, the following proposition is a consequence of \cite{B-L1}, which implies the Bobkov inequality. 

\begin{proposition}
Let $(E, \mu, \Gamma)$ satisfying the $CD(\kappa, \infty)$ condition and $f$ : $E \mapsto [0,1]$. Set
\[
\psi \, :  s \in (0, \infty) \mapsto  \int_{E}  \sqrt{ \mathcal{I}_{\gamma}^2 (P_s f) + \kappa^{-2} \Gamma (P_s f)} \,  d\mu.
\] 
Then
\[
- \psi'(s) \geq  \int_{E} \frac{\mathcal{I}_{\gamma}(P_s f) (\Gamma_2 - \kappa \Gamma) (\Phi^{-1} \circ P_s f)}{(1 + \Gamma (\Phi^{-1} \circ P_s f))^{3/2}} d\mu.
\]
\end{proposition}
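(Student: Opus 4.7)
The plan is to reduce to a cleaner expression via the substitution $g_s = \Phi^{-1}(P_s f)$, then differentiate under the integral and use integration by parts together with the $CD(\kappa,\infty)$ bound $\Gamma_2\geq\kappa\Gamma$. Writing $u_s = P_s f$, the identities $\mathcal{I}_{\gamma}(u) = \varphi(\Phi^{-1}(u))$ and, via the diffusion chain rule, $\Gamma(u_s) = \varphi(g_s)^2\,\Gamma(g_s)$, allow us to factor the integrand and obtain
\[
\psi(s) \;=\; \int_E \varphi(g_s)\sqrt{1+\kappa^{-2}\Gamma(g_s)}\, d\mu.
\]
This is the standard Bakry--Ledoux reformulation: it trades the profile function for the explicit Gaussian density $\varphi$, at the price of having to handle the evolution of $g_s$ rather than that of $u_s$.

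Next, I would compute $\partial_s g_s$. Since $u_s = \Phi(g_s)$ and $\partial_s u_s = Lu_s$, the diffusion chain rule gives $Lu_s = \varphi(g_s)L g_s + \varphi'(g_s)\Gamma(g_s) = \varphi(g_s)\bigl(Lg_s - g_s\Gamma(g_s)\bigr)$, so
\[
\partial_s g_s \;=\; Lg_s - g_s\,\Gamma(g_s).
\]
From this and $\partial_s \Gamma(g_s) = 2\Gamma(g_s,\partial_s g_s)$, differentiating under the integral and using $\varphi'(g) = -g\varphi(g)$ yields
\[
\psi'(s) \;=\; \int_E \varphi(g_s)\!\left[-g_s\sqrt{1+\kappa^{-2}\Gamma(g_s)}\,\bigl(Lg_s - g_s\Gamma(g_s)\bigr) + \frac{\kappa^{-2}\Gamma(g_s,Lg_s - g_s\Gamma(g_s))}{\sqrt{1+\kappa^{-2}\Gamma(g_s)}}\right]\!d\mu.
\]
Here $\varphi(g_s)\, d\mu$ plays the role of a reference weight that will interact with the reversibility of $\mu$; in particular, the reversibility identity $\int h\,Lg\,d\mu = -\int \Gamma(h,g)\,d\mu$, applied to the $Lg_s$ terms (with $h$ of the form $\varphi(g_s)\cdot(\cdots)$), converts first-order derivatives of $g_s$ into $\Gamma$-pairings and produces $\Gamma_2(g_s)$ via the definition $2\Gamma_2(g) = L\Gamma(g) - 2\Gamma(g,Lg)$.

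The heart of the computation is then an algebraic reorganization of the resulting terms: the cross-terms involving $g_s\Gamma(g_s)$ and $\Gamma(g_s,g_s\Gamma(g_s)) = g_s\Gamma(g_s,\Gamma(g_s)) + \Gamma(g_s)^2$ combine, after the integration by parts, to a perfect expression reminiscent of the diffusion chain rule for $\Gamma_2$ applied to $u \mapsto \Phi^{-1}(u)$. Using the identity $\Gamma_2(\Phi^{-1}\circ u) = \Gamma_2(g)$ and collecting everything over the common denominator $(1+\kappa^{-2}\Gamma(g_s))^{3/2}$, one isolates the non-negative contribution $\Gamma_2(g_s)-\kappa\Gamma(g_s)$, which is $\geq 0$ by the curvature-dimension hypothesis, and all remaining contributions either cancel identically or are non-positive. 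Reintroducing $\mathcal{I}_{\gamma}(u_s) = \varphi(g_s)$ recovers the claimed inequality.

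The main obstacle is purely computational: the algebraic bookkeeping in the integration by parts is delicate because $\partial_s g_s$ contains the non-linear correction $-g_s\Gamma(g_s)$, which interacts with $\Gamma$ through the chain rule $\Gamma(g,g\Gamma(g)) = g\Gamma(g,\Gamma(g)) + \Gamma(g)^2$, producing several terms that must cancel to leave only the $\Gamma_2-\kappa\Gamma$ contribution. The Carlen--Kerce approach (cited in the text) suggests performing this reduction at the level of the pointwise integrand $\varphi(g_s)\sqrt{1+\kappa^{-2}\Gamma(g_s)}$ by viewing $\varphi(g)\sqrt{1+\kappa^{-2}\Gamma(g)}$ as a function of $(g,\Gamma(g))$ and invoking the generalized Bochner inequality $\Gamma_2(F(g,\Gamma(g))) \geq \kappa\,\Gamma(F(g,\Gamma(g)))$ directly; this is probably the cleanest way to organize the cancellations and is the route I would follow, checking at the end that equality holds pointwise iff $\Gamma_2(g_s) = \kappa\Gamma(g_s)$, consistent with the rigidity statements of the earlier sections.
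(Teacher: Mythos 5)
Your preliminary reductions are correct: the identities $\mathcal{I}_{\gamma}(P_sf)=\varphi(g_s)$ and $\Gamma(P_sf)=\varphi(g_s)^2\,\Gamma(g_s)$, the resulting form of $\psi$, and the evolution equation $\partial_s g_s = Lg_s - g_s\Gamma(g_s)$ all check out, and differentiating under the integral gives the expression you display. Note, however, that the paper does not re-derive the derivative bound at all: it quotes from Bakry--Ledoux the already-established inequality
\[
-\psi'(s)\;\geq\;\int_E\frac{\mathcal{I}_{\gamma}^2(f_s)(\Gamma_2-\kappa\Gamma)(f_s)-\mathcal{I}_{\gamma}(f_s)\mathcal{I}'_{\gamma}(f_s)\Gamma(f_s,\Gamma(f_s))+\mathcal{I}'_{\gamma}(f_s)^2\Gamma(f_s)^2}{\bigl(\mathcal{I}_{\gamma}^2(f_s)+\Gamma(f_s)\bigr)^{3/2}}\,d\mu ,
\]
and its only new content is the Carlen--Kerce change of variables: by the chain rules for $\Gamma$ and $\Gamma_2$ with $u=\Phi^{-1}$, the numerator equals $\mathcal{I}_{\gamma}^4(f_s)(\Gamma_2-\kappa\Gamma)(\Phi^{-1}\circ f_s)$ and the denominator equals $\mathcal{I}_{\gamma}^3(f_s)(1+\Gamma(\Phi^{-1}\circ f_s))^{3/2}$. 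You instead set out to reprove the Bakry--Ledoux bound from scratch, which is legitimate in principle, but that is exactly where your argument stops being a proof.

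The gap is the assertion that, after integration by parts, ``all remaining contributions either cancel identically or are non-positive.'' This is the entire content of the Bakry--Ledoux computation and it is not a bookkeeping matter. After integrating by parts, $-\psi'(s)$ contains, besides the good term, cross terms in $\Gamma(g_s,\Gamma(g_s))$ and a term in $\Gamma(\Gamma(g_s))$ whose signs are \emph{not} controlled by $\Gamma_2\geq\kappa\Gamma$ alone; one needs the reinforced inequality $\Gamma(\Gamma(g))\leq 4\Gamma(g)\bigl[\Gamma_2(g)-\kappa\Gamma(g)\bigr]$ (itself a nontrivial consequence of $CD(\kappa,\infty)$, obtained by applying the condition to perturbations $g+\varepsilon h$ and examining the discriminant), combined with a completion of squares, to absorb them. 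Your proposed shortcut --- invoking $\Gamma_2(F(g,\Gamma(g)))\geq\kappa\,\Gamma(F(g,\Gamma(g)))$ for the integrand viewed as $F(g,\Gamma(g))$ --- does not substitute for this: $\psi'(s)$ is a time derivative and is not the integral of $\Gamma_2$ (or of $L$) of the integrand, so the curvature condition applied to the composite function yields nothing about $\psi'$. To complete the argument you must either carry out the quadratic-form analysis of Bakry--Ledoux explicitly or, as the paper does, cite it and keep only the chain-rule conversion to $\Phi^{-1}\circ f_s$. (A minor normalization point: carrying your $\kappa^{-2}$ through consistently puts $(1+\kappa^{-2}\Gamma(g_s))^{3/2}$ in the denominator, whereas the statement, following the paper's own inconsistency, drops this factor.)
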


\begin{proof}
The proof relies on the work of Bakry--Ledoux. Denoting  $P_s f = f_s$, it is shown in \cite{B-L1} that
\[
- \psi'(s) \geq \int_{E} \frac{\mathcal{I}^2_{\gamma}(f_s) (\Gamma_2 - \kappa \Gamma )(f_s)   -\mathcal{I}_{\gamma}(f_s) \mathcal{I}'_{\gamma}(f_s) \Gamma (f_s,\Gamma (f_s)) + \mathcal{I}'_{\gamma}(f_s)^2 \Gamma(f_s)^2}{(\mathcal{I}_{\gamma}^2 (f_s) + \Gamma (f_s)^{3/2})} d\mu.
\] 
Following the idea of Carlen and Kerce, we express the derivative $\psi'$ in terms of $\Phi^{-1} \circ f_s$. There is actually some geometric intuition behind this change of variable taking roots in the Bobkov inequality himself. Using the chain rule formulas 
\[
\Gamma(u(f)) = u'(f)^2 \Gamma (f), \qquad \Gamma_2(u(f)) = u'(f)^2 \Gamma_2(f) + u'(f) u''(f) \Gamma(f, \Gamma(f)) + u''(f)^2 \Gamma(f)^2,
\]
with $u = \Phi^{-1}$, since $(\Phi^{-1})' = \frac{1}{\mathcal{I}_{\gamma}}$ and $(\Phi^{-1})'' = - \frac{\mathcal{I}'_{\gamma}}{\mathcal{I}^2_{\gamma}} $ we have
\[
(\mathcal{I}_{\gamma}^2 (f_s) + \Gamma (f_s))^{3/2} = \mathcal{I}_{\gamma}^3 (f_s)(1 + \Gamma (\Phi^{-1} \circ f_s))^{3/2}
\]
and
\[
\mathcal{I}^2_{\gamma}(f_s) (\Gamma_2 - \kappa \Gamma) (f_s) - \mathcal{I}_{\gamma}(f_s) \mathcal{I}'_{\gamma}(f_s) \Gamma (f_s,\Gamma (f_s)) + \mathcal{I}'_{\gamma}(f_s)^2 \Gamma(f_s)^2 
= \mathcal{I}^4_{\gamma}(f_s) (\Gamma_2  - \kappa \Gamma) (\Phi^{-1} \circ f_s), 
\]
which concludes the proposition.  
\end{proof}

Of course, since its derivative is non positive, $\psi$ is decreasing which implies by ergodicity of the semigroup $(P_s)_{s\geq0}$ the Bobkov's inequality, and thus the isoperimetric conclusion.

For the further purposes, it has been established in \cite{B-L1} by the same scheme of proof a reverse isoperimetric inequality in the following form. 
\begin{proposition} \label{reverseiso}
Let $(E, \mu, \Gamma)$ be a probability space satisfying $CD(\kappa, \infty)$. Then, for all functions $f \, : \,  E \to [0,1]$,  
\[
C(\kappa, t) \Gamma(P_t f)  \leq (\mathcal{I}_{\gamma} (P_t f))^2 - (P_t (\mathcal{I}_{\gamma} (f)))^2.
\]  
\end{proposition}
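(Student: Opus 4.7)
The plan is to follow exactly the same semigroup interpolation philosophy that produced the Bobkov inequality and the reverse logarithmic Sobolev inequality, but with a simpler single-term functional tailored to the right-hand side. For fixed $t > 0$ and $f : E \to [0,1]$, I would introduce
\[
\phi(s) = \bigl( P_s [ \mathcal{I}_\gamma(P_{t-s} f)] \bigr)^2, \qquad s \in [0,t],
\]
so that $\phi(0) = \mathcal{I}_\gamma(P_t f)^2$ and $\phi(t) = (P_t \mathcal{I}_\gamma(f))^2$. The target inequality is then exactly $\phi(0) - \phi(t) \geq C(\kappa,t)\, \Gamma(P_t f)$, which I would establish by proving the pointwise bound $-\phi'(s) \geq 2 e^{2\kappa s} \Gamma(P_t f)$ and integrating over $[0,t]$.

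To differentiate $\phi$ I would use the diffusion chain rule $L(F(g)) = F'(g) Lg + F''(g) \Gamma(g)$ together with the fundamental ODE $\mathcal{I}_\gamma \mathcal{I}_\gamma'' = -1$ satisfied by the Gaussian isoperimetric profile. Writing $g_s = P_{t-s}f$ and $h(s) = P_s[\mathcal{I}_\gamma(g_s)]$, a direct computation yields
\[
h'(s) = P_s \bigl[ L \mathcal{I}_\gamma(g_s) - \mathcal{I}_\gamma'(g_s)\, L g_s \bigr] = P_s \bigl[ \mathcal{I}_\gamma''(g_s) \Gamma(g_s) \bigr] = - P_s \bigl[ \Gamma(g_s)/\mathcal{I}_\gamma(g_s) \bigr],
\]
so that $\phi'(s) = -2 h(s)\, P_s[\Gamma(g_s)/\mathcal{I}_\gamma(g_s)]$. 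Applying the Cauchy--Schwarz inequality under the Markov kernel $P_s$ to the factorization $\sqrt{\Gamma(g_s)} = \sqrt{\mathcal{I}_\gamma(g_s)} \cdot \sqrt{\Gamma(g_s)/\mathcal{I}_\gamma(g_s)}$ gives
\[
-\phi'(s) \geq 2 \bigl( P_s \sqrt{\Gamma(g_s)} \bigr)^2.
\]

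To finish, I would invoke the strengthened commutation $\sqrt{\Gamma(P_s g)} \leq e^{-\kappa s} P_s(\sqrt{\Gamma(g)})$, the square-root form of \eqref{eq.comm} already used in the derivation of the reverse logarithmic Sobolev inequality, applied with $g = P_{t-s}f$ so that $P_s g = P_t f$. This gives $P_s \sqrt{\Gamma(P_{t-s}f)} \geq e^{\kappa s} \sqrt{\Gamma(P_t f)}$, hence $-\phi'(s) \geq 2 e^{2\kappa s} \Gamma(P_t f)$, and integration on $[0,t]$ produces the constant $C(\kappa,t) = 2 \int_0^t e^{2\kappa s}\, ds$ on the nose.

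The only real technical obstacle is the behavior of the integrand near the set where $\mathcal{I}_\gamma(g_s) = 0$, i.e.\ where $g_s \in \{0,1\}$, on which the ratio $\Gamma(g_s)/\mathcal{I}_\gamma(g_s)$ can blow up and make the Cauchy--Schwarz step delicate. The standard remedy, also used in \cite{B-L1} and \cite{C-K}, is to run the argument first with $f$ replaced by $(1-\varepsilon)f + \varepsilon/2$, which keeps $g_s$ uniformly inside a compact subinterval of $(0,1)$ where $\mathcal{I}_\gamma$ is bounded away from zero and all the formal manipulations are fully justified, and then to let $\varepsilon \to 0$ by dominated convergence.
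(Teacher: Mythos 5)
Your argument is correct and is precisely the semigroup interpolation the paper has in mind: the paper omits the proof and refers to \cite{B-L1}, where the reverse inequality is obtained exactly by differentiating $s \mapsto (P_s[\mathcal{I}_\gamma(P_{t-s}f)])^2$, using the relation $\mathcal{I}_\gamma \mathcal{I}_\gamma'' = -1$, Cauchy--Schwarz for the kernel $P_s$, and the strong gradient bound $\sqrt{\Gamma(P_s g)} \leq e^{-\kappa s}P_s(\sqrt{\Gamma(g)})$ that the paper itself invokes for the reverse Poincar\'e and log-Sobolev inequalities. The regularization $f \mapsto (1-\varepsilon)f + \varepsilon/2$ correctly handles the degeneracy at $\{0,1\}$, so nothing is missing.
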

By the chain-rule formula, this inequality expresses that $\Phi^{-1} (P_t f)$ is Lipschitz in the sense that $\Gamma (\Phi^{-1} (P_t f)) \leq \frac{1}{C(\kappa,t)}$.

One can prove that it implies in the Gaussian space a reverse Bobkov inequality (cf \cite{B-L1})). Indeed (say with $\kappa =1)$, using the exact commutation $e^t \nabla Q_t f = Q_t \nabla f$ and letting $t$ goes to infinity, it holds by ergodicity
\[
\bigg| \int_{\R^n} \nabla f d\gamma_n \bigg|^2 \leq \mathcal{I}_{\gamma}^2 \bigg( \int_{\R^n} f d\gamma_n \bigg) - \bigg(\int_{\R^n} \mathcal{I}_{\gamma} (f) d\gamma_n \bigg)^2,
\]
or equivalently, 
\begin{equation} \label{reverseBobkov}
\sqrt{\bigg| \int_{\R^n} \nabla f d\gamma_n \bigg|^2 +  \bigg( \int_{\R^n} \mathcal{I}_{\gamma} (f) d\gamma_n  \bigg)^2} \leq \mathcal{I}_{\gamma} \bigg( \int_{\R^n} f d\gamma_n \bigg).
\end{equation}

\section{Stein's lemma and rigidity theorems} \label{sec:stein}

The aim of this section is to put forward a rigidity phenomenon that occurs when there exists a non constant function achieving equality in the above geometric and functional inequalities. A common factor that appears in the proofs is the non negative quantity $\Gamma_2 - \kappa \Gamma$. In order to have equality, is is necessary that this quantity is equal to $0$.

Without loss of generality, we can assume that in any of the above inequalities, $\int_{E} f d\mu =0$. Let us take a (smooth) function achieving inequality in spectral gap inequality, Sobolev logarithmic or Bobkov inequality. Then, there is equality in the commutation \eqref{eq.comm}, which implies the equality $\Gamma_2 = \kappa \Gamma$. More precisely, $\Gamma_2(f) = \kappa \Gamma(f)$ for the spectral gap inequality, $\Gamma_2(\log f) = \kappa \Gamma (\log f)$ for the Sobolev Logarithmic inequality and $\Gamma_2(\Phi^{-1} \circ f) = \kappa \Gamma (\Phi^{-1} \circ f)$ for the Bobkov's inequality. Besides, we have the following Stein's lemma.

\begin{lemma}
Assume that $(E, \Gamma, \mu)$ satisfies $CD(\kappa, \infty)$ and that there exists $f$ non constant such that $\Gamma_2 (f) = \kappa \Gamma (f)$. Then $L f = - \kappa f$. In particular the law of $f$ is Gaussian. 
\end{lemma}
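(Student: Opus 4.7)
The plan is to proceed in three stages: reduce to centered $f$, establish the eigenvalue identity via an $L^2$ computation combined with the Bakry--Émery spectral gap, and finally identify the distribution via its Laplace transform.

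First, replacing $f$ by $f - \int f\,d\mu$ affects neither $\Gamma(f)$, nor $Lf$, nor $\Gamma_2(f)$, so I assume without loss of generality that $f$ is centered (the identity $Lf=-\kappa f$ is understood modulo this normalization). The claim then amounts to showing $\|Lf + \kappa f\|_{L^2(\mu)}^2 = 0$. Using reversibility and invariance of $\mu$, and integrating the identity $\Gamma_2(f) = \tfrac12 L\Gamma(f) - \Gamma(f,Lf)$ against $\mu$, the term $\int L\Gamma(f)\,d\mu$ vanishes, so
$$\int (Lf)^2\,d\mu = -\int \Gamma(f,Lf)\,d\mu = \int \Gamma_2(f)\,d\mu.$$
The hypothesis $\Gamma_2(f) = \kappa\Gamma(f)$ combined with $\int \Gamma(f)\,d\mu = -\int f\,Lf\,d\mu$ then gives $\int(Lf)^2\,d\mu = -\kappa \int f\, Lf\,d\mu$, whence
$$\int(Lf+\kappa f)^2\,d\mu = \kappa \int f\, Lf\,d\mu + \kappa^2 \int f^2\,d\mu.$$
Now I invoke the spectral gap inequality $SG(\kappa)$, established in the previous section as a consequence of $CD(\kappa,\infty)$: since $f$ is centered, $\kappa \int f^2\,d\mu \le \int \Gamma(f)\,d\mu = -\int f\,Lf\,d\mu$, which makes the right-hand side above nonpositive. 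The square must therefore vanish, yielding $Lf = -\kappa f$ $\mu$-a.e.

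To identify the law of $f$, I first show that $\Gamma(f)$ is constant. Substituting $Lf=-\kappa f$ into the Bochner identity gives $\Gamma_2(f) = \tfrac12 L\Gamma(f) + \kappa\Gamma(f)$, and comparing with the hypothesis $\Gamma_2(f) = \kappa\Gamma(f)$ forces $L\Gamma(f) = 0$. Ergodicity of $(P_t)_{t\ge 0}$ applied to $P_t \Gamma(f) = \Gamma(f)$ then pins $\Gamma(f)$ to its mean $c \ge 0$, with $c > 0$ since $f$ is non constant. Finally, setting $F(\lambda) = \int e^{\lambda f}\,d\mu$ and integrating by parts using $Lf = -\kappa f$ and $\Gamma(f) = c$,
$$F'(\lambda) = \int f\, e^{\lambda f}\,d\mu = -\frac{1}{\kappa}\int (Lf)\, e^{\lambda f}\,d\mu = \frac{\lambda}{\kappa}\int \Gamma(f)\, e^{\lambda f}\,d\mu = \frac{c\lambda}{\kappa}\, F(\lambda),$$
so $F(\lambda) = \exp(c\lambda^2/2\kappa)$ and $f$ is centered Gaussian of variance $c/\kappa$.

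The argument itself is short; what requires care is purely technical, namely justifying the integrations by parts and the applicability of the spectral gap in the abstract Markov-triple setting — one must check that $f$, $Lf$, $\Gamma(f)$ and the exponentials $e^{\lambda f}$ sit in the appropriate domains, and that $\Gamma(f)$ has finite $\mu$-moments. In the weighted-manifold case covered by the paper, these regularity matters are handled by standard truncation and limiting arguments on a suitable algebra $\mathcal{A}$.
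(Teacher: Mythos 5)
Your proof is correct, and its core is the same as the paper's: both arguments combine the integrated Bochner identity $\int(Lf)^2\,d\mu=\int\Gamma_2(f)\,d\mu$, the hypothesis $\Gamma_2(f)=\kappa\Gamma(f)$, and the spectral gap $SG(\kappa)$. The only difference in the first step is cosmetic but arguably an improvement: the paper chains Cauchy--Schwarz with the Poincar\'e inequality and then extracts $Lf=kf$ from the equality case of Cauchy--Schwarz (which requires a small extra discussion of proportionality and of the sign of $k$), whereas you complete the square and show directly that $\|Lf+\kappa f\|_{L^2(\mu)}^2\le \kappa\int fLf\,d\mu+\kappa^2\int f^2\,d\mu\le 0$, which is cleaner. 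The more substantive divergence is in the second assertion: the paper does not prove that the law of $f$ is Gaussian and simply refers to Ledoux's paper \cite{Le2}, while you supply the full argument --- deducing $L\Gamma(f)=0$ from the diffusion identity $\Gamma_2(f)=\tfrac12 L\Gamma(f)-\Gamma(f,Lf)$ with $Lf=-\kappa f$, using ergodicity to conclude $\Gamma(f)\equiv c>0$, and then closing the ODE $F'(\lambda)=\tfrac{c\lambda}{\kappa}F(\lambda)$ for the Laplace transform. This is exactly the standard "chaos" argument the citation points to, so your write-up is self-contained where the paper is not; your closing caveats about domains and integrability are appropriate and consistent with the level of rigor the paper itself adopts.
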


\begin{proof} [Proof]
Without loss of generality, we can make the assumption that  $\int_E f d\mu =0$. Then by Cauchy-Schwarz' inequality and by the spectral gap inequality, 
\[
\kappa \int_{E} \Gamma (f)  d\mu = \kappa \int_E  f (-Lf)  d\mu  \leq \kappa \bigg( \int_{\Omega}  f^2  d\mu \int_E  (Lf)^2  d\mu   \bigg)^{1/2} \leq \bigg( \kappa \int_E  \Gamma(f)  d\mu \int_E  (Lf)^2  d\mu   \bigg)^{1/2},
\]
so that 
\[
\kappa \int_E \Gamma (f)  d\mu  \leq \int_E  (Lf)^2  d\mu = \int_E  \Gamma_2(f)  d\mu.
\]
Since by hypothesis $\Gamma_2 (f) = \kappa \Gamma (f)$, there is equality everywhere. Then, there exists a negative real number $k$ such that $Lf = kf$ and necessary $k = -\kappa$. 

For the next statement we confer to \cite{Le2}.

\end{proof}

Let $(E, \mu, \Gamma)$ be a $CD(\kappa, n)$ space, with $n < \infty$. It is important to point out that since the first eigenvalue of $L$ is given by $\frac{n \kappa}{n-1}$ there is no function achieving $\Gamma_2 (f) = \kappa \Gamma(f)$. In fact this property is true only for ``real'' $CD( \kappa, \infty)$ spaces, that is not satisfying $CD(\kappa, n)$ for any finite $n$. 

When $(M, g, e^{-\psi} d\mathrm{vol})$ is a weighted Riemannian manifold, as a consequence of the Bochner's formula,
\[
\Gamma_2(f) = \Gamma(f) \Longleftrightarrow \mathrm{Ric}_{g} (\nabla f, \nabla f) + (\nabla^2 \psi - \kappa Id) (\nabla f, \nabla f) + \| \nabla^2 f \|_{\mathrm{HS}}^2 = 0,
\]  
and the conclusion of Stein's lemma reads
\[
\Delta_g f - \nabla \psi \cdot \nabla f = - \kappa f.
\]
When the manifold is $\R^n$, it easily implies that there is a Gaussian marginal and $f$ is an eigenvector of $L$ in the Gaussian direction. In general, we can make use of the needle decompostions of \cite{Kl}, so that it implies the following theorem. 

\begin{theorem} \label{ThmObatab}
Let $ (M, g, e^{-\psi} d\mathrm{vol})$ be a $CD(\kappa, \infty)$ manifold. If there exists a non constant extremizer $f$ in one of the inequality of Section~\ref{sec:geofun}, then
\[
(M, g, e^{-\psi} d\mathrm{vol}) = (\R, | \cdot |, \gamma_{\kappa}) \times (M', g',e^{-\psi '} d\mathrm{vol}),
\]
where $(M', g',e^{-\psi '} d\mathrm{vol})$ is a $CD(\kappa, \infty)$ manifold and $f$, $\log f$ or $\Phi^{-1} (f)$ is an eigenfunction of the Ornstein-Uhlenbeck semigroup associated to the eigenvalue $\kappa$, that is a linear function. 
\end{theorem}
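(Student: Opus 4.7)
The plan is to combine an infinitesimal rigidity extracted from the commutation lemma of Section~\ref{sec:geofun} with a global splitting obtained from the needle decomposition of \cite{Kl}. The first step is to unify the three scenarios by producing, from the given non-constant extremizer $f$, an auxiliary function $h$ satisfying both the pointwise identity $\Gamma_2(h) = \kappa \Gamma(h)$ and the eigenfunction equation $Lh = -\kappa h$. Concretely, $h$ will be $f$ itself in the $SG(\kappa)$ case, $\log f$ in the $LS(\kappa)$ case, and $\Phi^{-1} \circ f$ in the Bobkov case---the choice being dictated by how each inequality is derived from the commutation \eqref{eq.comm}. Equality in any of them forces equality in the relevant instance of \eqref{eq.comm}, and the observation following that lemma then gives $\Gamma_2(h) = \kappa \Gamma(h)$, after which Stein's lemma delivers $Lh = -\kappa h$.

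The second step converts this infinitesimal data into parallelism of $\nabla h$. Writing $\Gamma_2(h) - \kappa \Gamma(h) = 0$ via the Bochner formula for the weighted manifold yields
\[
\bigl(\mathrm{Ric}_g + \nabla^2 \psi - \kappa g\bigr)(\nabla h, \nabla h) + \| \nabla^2 h \|_{HS}^2 = 0.
\]
Both summands are non-negative by the $CD(\kappa, \infty)$ hypothesis, hence each vanishes identically. In particular $\nabla^2 h \equiv 0$, so $\nabla h$ is a parallel vector field; since $|\nabla h|^2$ is therefore constant and $h$ is non-constant, $\nabla h$ has nowhere-vanishing constant norm $c > 0$. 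The other vanishing will later be used to transfer the curvature condition onto the complementary factor.

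The third step globalises this to an isometric product decomposition. Following \cite{Kl, C-M2}, we perform a one-dimensional needle decomposition guided by the $c$-Lipschitz function $h$: the manifold $M$ is partitioned (up to a negligible set) into transport rays directed by $\nabla h / c$, and on each ray the conditional measure is a $CD(\kappa, \infty)$ probability on $\R$. Along any such ray, parametrised by arc length $t$, the restriction of $h$ is affine with slope $c$, and satisfies $h''(t) - V'(t) h'(t) = -\kappa h(t)$ for the conditional potential $V$; since $h''=0$, this forces $V(t) = \tfrac{\kappa}{2} t^2 + \text{const}$, so the conditional is exactly $\gamma_\kappa$. Because the rays are parallel geodesics (their tangent field is a parallel unit vector field on $M$), they fit together into a genuine product: the distribution $\{\nabla h\}^\perp$ is integrable with totally geodesic leaves, and flowing any such leaf $M'$ by $\nabla h / c$ gives the isometry
\[
(M, g, e^{-\psi}\, d\mathrm{vol}) = (\R, |\cdot|, \gamma_\kappa) \times (M', g', e^{-\psi'}\, d\mathrm{vol}').
\]
The vanishing of $(\mathrm{Ric}_g + \nabla^2 \psi - \kappa g)(\nabla h, \nabla h)$ combined with the $CD(\kappa, \infty)$ tensor inequality on $M$ passes the $CD(\kappa, \infty)$ condition on to $(M', g', e^{-\psi'}\, d\mathrm{vol}')$, and the affine form of $h$ (respectively $\log f$ or $\Phi^{-1} \circ f$) in the $\R$-coordinate provides the claimed Ornstein--Uhlenbeck eigenfunction.

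The main obstacle is Step~3: the pointwise identity $\nabla^2 h \equiv 0$ is a purely local statement, and the passage to a \emph{global} isometric product on a possibly non-compact weighted manifold requires controlling how the rays of $\nabla h$ fit together. The needle decomposition handles the measure-theoretic reduction cleanly, but matching the 1D Gaussian factors across needles and producing a smooth transverse factor $M'$ uses that $\nabla h / c$ is a parallel unit vector field, which trivialises the holonomy of the orthogonal distribution; this is the point at which the smooth Riemannian setting (rather than the purely synthetic $\mathrm{CD}$ setting) makes the splitting direct rather than relying on the heavier machinery of \cite{C-M2}.
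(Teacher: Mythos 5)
Your proposal is correct and its first two steps coincide with the paper's argument: equality in the functional inequality forces equality in the commutation \eqref{eq.comm}, hence $\Gamma_2(h)=\kappa\Gamma(h)$ for $h=f$, $\log f$ or $\Phi^{-1}\circ f$; Stein's lemma then gives $Lh=-\kappa h$; and Bochner's formula splits the vanishing quantity into two non-negative terms, yielding $\nabla^2 h\equiv 0$ and $(\mathrm{Ric}_g+\nabla^2\psi-\kappa g)(\nabla h,\nabla h)=0$. Where you diverge is the globalisation. The paper disposes of the splitting in one line by invoking Klartag's needle decomposition, whereas you combine the needles with the observation that $\nabla h$ is a parallel field of constant norm, so that the orthogonal distribution integrates to totally geodesic leaves and the manifold splits à la de Rham; this is precisely the ``more direct way (i.e.\ without relying on needle decomposition)'' that the paper itself raises as a question at the end of Section~\ref{sec:stein}, where it records that $\nabla f$ is constant with $\mathrm{Ric}_g(\nabla f,\nabla f)=0$ and points to the splitting theorem of \cite{W-W}. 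Your route buys a self-contained smooth argument (modulo completeness of $M$, which you should state); the paper's buys brevity at the cost of importing \cite{Kl}. One small imprecision in your Step~3: the ODE $h''-V'h'=-\kappa h$ along a ray should not be attributed to the ``conditional potential'' of the needle decomposition, which is not a priori the restriction of $\psi$; the clean way is to note that $\nabla^2 h=0$ gives $\Delta h=0$, hence $\nabla\psi\cdot\nabla h=\kappa h$, and since $\nabla h=c\,\partial_t$ is parallel and $h=ct+h_0$ globally, $\partial_t\psi=\kappa t+\kappa h_0/c$ is independent of the transverse variable, so $\psi$ itself splits as $\tfrac{\kappa}{2}t^2$ plus a function of $M'$. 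This is a presentational fix, not a gap.
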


In particular if the spectral gap constant is equal to $\kappa$, an eigenfunction associated to $\kappa$ satisfies the condition of the above theorem. Recall that, if $\rho$ designs the Sobolev logarithmic constant and $\lambda$ the spectral gap constant, then $\rho \geq \theta \kappa + (1 - \theta) \lambda$ with $\theta \in  (0,1)$. Thus, it implies Theorem \ref{infObata} announced in the introduction, which can be viewed as an infinite dimensional analogous of the Obata's theorem. 

In particular, the Gaussian space is the only weighted manifold for which there is non trivial extremizers. The linear functions are the only non constant functions achieving equality in the spectral gap inequality and the exponential functions are the only non constant functions achieving equality in Sobolev logarithmic inequality. The same conclusion holds for the reverse spectral gap and Sobolev logarithmic inequalities. 

For the isoperimetric inequality, we are interested in finding extremal sets. Actually, the fact that half spaces are the only isoperimetric minimizers is a consequence of the last assertion of the proposition. This characterization is due to Carlen and Kerce and the purpose of the next section is the generalize this fact to general $CD(\kappa, \infty)$ weighted Riemannian manifolds. Notice that for the reverse Bobkov inequality \eqref{reverseBobkov} there is equality if and only if $\Phi^{-1} (f)$ is linear. This conclusion has already been obtained in \cite{B-C-F}.

To conclude this section, the condition that there exists a non constant function $f$ such that $\Gamma_2(f) = \kappa\Gamma(f)$  implies $\Delta_g f = 0$, so that $\nabla (\nabla \psi \cdot \nabla f) =  (\nabla^2 \psi) (\nabla f) =  \kappa \nabla f$. Thus, there exists $a \in T_x M$ such that $\nabla f = a$ and moreover $\mathrm{Ric}_{g} (a, a) =0$. One wonder if one can prove that it implies in a more direct way (i.e. wihout relying on needles decomposition) that the manifold is a Cartesian product with a one dimensional Gaussian space. Notice that if $M$ contains a line, a splitting Theorem has been established in \cite{W-W}.

\section{An isoperimetric rigidity statement} \label{sec:rigi}

We now turn to the isoperimetric rigidity statement. Intuitively, $\lim_{t \to 0} = \int_{M} |\nabla P_t \mathbf{1}_A| d\mu$ is a good candidate for the boundary measure of $A$ as  $P_t \mathbf{1}_A$ is a smooth function whenever $t >0$ and $P_t \mathbf{1}_A$ converges to $\mathbf{1}_A$ as $t$ goes to $0$. However we have defined the isoperimetric problem relatively to the Minkownski content $\mu^{+}(A) = \liminf_{\varepsilon \to 0} \frac{\mu(A^{\varepsilon}) - \mu(A)} {\varepsilon}$.  In \cite{C-K}, the isoperimetric problem is given with respect to another definition of boundary, the weighted relative perimeter, that will be recalled below. Still, it is known for a while that in Gaussian space the isoperimetric problem can be taken with respect to any kind of definition of perimeters. The remark in \cite{C-K} is that our naive definition of boundary using the semigroup agrees with the weighted relative perimeter. Actually, as we shall show, this fact is more general.

Let $(M,g,d\mathrm{vol})$ be a Riemmanian manifold. We denote by $|Du|$ the variation, in the sense of De Giorgi \cite{DG}, of $u \in L^1(M)$ defined by  
\[
|Du| (M) =  \sup \bigg \{ \int_{M} u \mathrm{div} \varphi d\mathrm{vol}, \, \varphi \in C_c^1(M, \R)\,  \& \, \| \varphi \|_{\infty} \leq 1. \bigg \}
\]
When $u$ is the indicator function of a set $A$, $|Du|$ is called perimeter of $A$. It has been established in \cite{Car-M} that the variation of $u$ is linked with the standard heat kernel $(P_t)_{t \geq 0} = (e^{t \Delta})_{t \geq 0}$ in the following sense. 
\begin{proposition} \label{boundary} 
For all $u \in L^1(M)$, 
\[
|D u| = \lim_{t \to 0} \int_{M} |\nabla P_t u| d\mathrm{vol}.
\]
\end{proposition}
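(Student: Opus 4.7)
The plan is to prove the two inequalities $|Du|(M) \le \liminf_{t \to 0}\int_M |\nabla P_t u|\, d\mathrm{vol}$ and $\limsup_{t \to 0}\int_M |\nabla P_t u|\, d\mathrm{vol} \le |Du|(M)$ separately; the upper bound is only needed when $|Du|(M)<\infty$, since the first inequality forces the limit to be $+\infty$ otherwise.

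For the lower bound I would argue directly from the variational definition. Fix a test vector field $\varphi \in C_c^1(M, TM)$ with $\|\varphi\|_\infty \le 1$. Strong $L^1$-continuity of $(P_t)$ yields $P_t u \to u$ in $L^1$ as $t\to 0$, so, since $\mathrm{div}\,\varphi$ is bounded and compactly supported,
\[
\int_M u\, \mathrm{div}\,\varphi\, d\mathrm{vol} = \lim_{t \to 0} \int_M P_t u\, \mathrm{div}\,\varphi\, d\mathrm{vol} = -\lim_{t \to 0} \int_M \langle \nabla P_t u, \varphi\rangle\, d\mathrm{vol},
\]
where the integration by parts is legitimate by smoothness of $P_t u$ for $t>0$. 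The pointwise estimate $|\langle \nabla P_t u, \varphi\rangle|\le|\nabla P_t u|$, followed by taking the supremum over admissible $\varphi$, produces the lower bound.

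For the upper bound I would approximate $u\in BV(M)$ by smooth functions $u_n\in C^\infty(M)\cap L^1(M)$ with $u_n\to u$ in $L^1$ and $\int_M|\nabla u_n|\,d\mathrm{vol}\to|Du|(M)$ (Meyers--Serrin / Anzellotti--Giaquinta density on manifolds). The stronger form of the commutation property mentioned just after Proposition on spectral gap in Section~\ref{sec:geofun}, namely $|\nabla P_t u_n|\le e^{-\kappa t}P_t(|\nabla u_n|)$ under $CD(\kappa,\infty)$, combined with the $P_t$-invariance of $d\mathrm{vol}$ (self-adjointness of $\Delta_g$), gives
\[
\int_M |\nabla P_t u_n|\,d\mathrm{vol}\le e^{-\kappa t}\int_M |\nabla u_n|\,d\mathrm{vol}.
\]
Fixing $t>0$ and letting $n\to\infty$, the $L^1$-lower semicontinuity of the variation applied to the $L^1$-convergence $P_tu_n\to P_tu$, together with smoothness of $P_tu$, yields $\int_M|\nabla P_tu|\,d\mathrm{vol}\le e^{-\kappa t}|Du|(M)$, and the conclusion follows by sending $t\to 0$.

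The main difficulty lies in the upper bound: the pointwise gradient commutation is a non-trivial (and in fact equivalent) reformulation of the $CD(\kappa,\infty)$ condition, and without such a curvature assumption the argument above breaks down. An alternative route, still valid on a general Riemannian manifold, would rely on the duality identity $\int_M P_tu\,\mathrm{div}\,\varphi\,d\mathrm{vol}=\int_M u\,P_t(\mathrm{div}\,\varphi)\,d\mathrm{vol}$ and a representation of $P_t\circ\mathrm{div}$ as the divergence of a vector field with sup-norm at most $1$ — this is precisely the kind of estimate that a curvature bound supplies via a Weitzenböck-type commutator. A minor technical point is the smooth $BV$-approximation on possibly non-compact manifolds, classical but requiring some localization.
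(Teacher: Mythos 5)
Your lower bound is exactly the paper's: test against a compactly supported $C^1$ vector field $\varphi$ with $\|\varphi\|_\infty\le 1$, use $L^1$-continuity of $(P_t)$ to pass $\int_M u\,\mathrm{div}\,\varphi\,d\mathrm{vol}=\lim_t\int_M P_tu\,\mathrm{div}\,\varphi\,d\mathrm{vol}$, integrate by parts and take the supremum. For the upper bound, however, you and the paper diverge. The paper (following Carbonaro--Mauceri) stays entirely inside the duality formulation: writing $\int_M\langle\nabla P_tu,\varphi\rangle\,d\mathrm{vol}=\int_M u\,P_t(\mathrm{div}\,\varphi)\,d\mathrm{vol}$ by self-adjointness and identifying $P_t(\mathrm{div}\,\varphi)$ with $\mathrm{div}(P_t\varphi)$ for the semigroup acting on vector fields, with $\|P_t\varphi\|_\infty\le 1$, it concludes that $P_t\varphi$ is an admissible competitor, hence $\int_M|\nabla P_tu|\,d\mathrm{vol}\le|Du|(M)$ for \emph{every} $t>0$, with no approximation of $u$ needed. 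You instead invoke the pointwise commutation $|\nabla P_tv|\le e^{-\kappa t}P_t(|\nabla v|)$ under $CD(\kappa,\infty)$, a Meyers--Serrin smooth approximation of $u$ in $BV$, and $L^1$-lower semicontinuity of the variation. Your route is correct in the paper's setting (and you correctly flag that it needs the curvature hypothesis and the density theorem on non-compact manifolds), but note that the two approaches are really dual to one another: the sup-norm contraction of the semigroup on vector fields that the paper's argument needs is the adjoint statement of the very gradient commutation you use, so neither is more general than the other in substance. What the paper's version buys is economy — no $e^{-\kappa t}$ factor, no approximation sequence, and a bound valid for each fixed $t$ — which is why it adapts immediately to the weighted perimeter in the next proposition; what yours buys is that it makes the role of the Bakry--\'Emery commutation, the engine of the whole paper, completely explicit.
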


When $(M,g,\mu)$ is a weighted Riemannian manifold, one naturally extends the definition of De Giorgi and define the weighted-perimeter as follows. 
\begin{definition}
Let $(M, g, \mu = e^{-\psi} \mathrm{vol})$ a weighted Riemannian manifold with $(P_t)_{t \geq 0}$ its relative semigroup. Then we define the relative perimeter of $A \subset M$ by
\[
\mathrm{Per}_{\psi}(A) =  \sup \bigg \{ \int_{A}  (\mathrm{div} \varphi - \nabla \psi \cdot \varphi) d\mu, \, \varphi \in C_c^1(M, \R)\, \& \, \| \varphi \|_{\infty} \leq 1 \bigg \}.
\]
\end{definition}
The proof of \cite{Car-M} can be easily adapted as below to reach the following conclusion. Actually, in the Gaussian space, a similar proof appears in \cite{C-K}. 
\begin{proposition}  
For all $A \subset M$, 
\begin{equation} \label{bord}
\mathrm{Per}_{\psi}(A) = \lim_{t \to 0} \int_{M} |\nabla P_t \mathbf{1}_A| d\mu.
\end{equation}
\end{proposition}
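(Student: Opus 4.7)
The plan is to establish both inequalities $\mathrm{Per}_\psi(A) \le \lim_{t \to 0^+} \int_M |\nabla P_t \mathbf{1}_A| d\mu$ and its reverse, along the lines of \cite{Car-M} adapted to the weighted setting. Before proving the two bounds, I would first observe that the limit exists: the (equivalent) reinforced Bakry--Emery commutation $\sqrt{\Gamma(P_t f)} \le e^{-\kappa t} P_t \sqrt{\Gamma(f)}$, integrated against $\mu$ using its $P_t$-invariance, gives
\[
\int_M |\nabla P_{t_2}\mathbf{1}_A| d\mu \le e^{-\kappa(t_2 - t_1)}\int_M |\nabla P_{t_1} \mathbf{1}_A| d\mu \qquad (t_2 \ge t_1 > 0),
\]
so the function $t \mapsto \int_M |\nabla P_t \mathbf{1}_A| d\mu$ is non-increasing on $(0,\infty)$ and its limit at $0^+$ exists in $[0,\infty]$.

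For the lower bound, fix $\varphi \in C_c^1(M)$ with $\|\varphi\|_\infty \le 1$. Integration by parts against $d\mu = e^{-\psi} d\mathrm{vol}$ (i.e.\ $\int \nabla u \cdot \varphi \, d\mu = -\int u (\mathrm{div}\varphi - \nabla \psi \cdot \varphi) d\mu$) applied to $u = P_t \mathbf{1}_A$ yields
\[
\int_M P_t\mathbf{1}_A (\mathrm{div}\varphi - \nabla\psi\cdot\varphi) d\mu = -\int_M \nabla P_t\mathbf{1}_A \cdot \varphi \, d\mu.
\]
As $t \to 0^+$, the $L^1(\mu)$-strong continuity $P_t\mathbf{1}_A \to \mathbf{1}_A$ combined with the boundedness of the test function $\mathrm{div}\varphi - \nabla\psi\cdot\varphi$ (which has compact support) forces the left-hand side to converge to $\int_A (\mathrm{div}\varphi - \nabla\psi\cdot\varphi) d\mu$, while the right-hand side is bounded in absolute value by $\int_M |\nabla P_t \mathbf{1}_A| d\mu$. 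Supremizing over $\pm\varphi$ gives $\mathrm{Per}_\psi(A) \le \lim_{t\to 0^+} \int_M |\nabla P_t \mathbf{1}_A| d\mu$.

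For the upper bound, we may assume $\mathrm{Per}_\psi(A) < \infty$. The key ingredient is a weighted Meyers--Serrin type approximation: there exists a sequence $(u_k) \subset C_c^1(M)$ with $u_k \to \mathbf{1}_A$ in $L^1(\mu)$ and $\int_M |\nabla u_k| d\mu \to \mathrm{Per}_\psi(A)$. Applying $\sqrt{\Gamma(P_t f)} \le e^{-\kappa t} P_t \sqrt{\Gamma(f)}$ with $f = u_k$ and integrating gives $\int_M |\nabla P_t u_k| d\mu \le e^{-\kappa t}\int_M |\nabla u_k| d\mu \le \int_M |\nabla u_k| d\mu$. Since $P_t$ is $L^1(\mu)$-contractive, $P_t u_k \to P_t \mathbf{1}_A$ in $L^1(\mu)$; and the functional $u \mapsto \sup_\varphi \int_M u(\mathrm{div}\varphi - \nabla\psi\cdot\varphi) d\mu$ is $L^1(\mu)$-lower semicontinuous and coincides with $\int |\nabla u| d\mu$ on smooth $u$. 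Applied to the smooth function $P_t \mathbf{1}_A$, this yields
\[
\int_M |\nabla P_t \mathbf{1}_A| d\mu \le \liminf_k \int_M |\nabla P_t u_k| d\mu \le \mathrm{Per}_\psi(A),
\]
and letting $t \to 0^+$ completes the proof.

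The main obstacle is the smooth approximation step in the upper bound: producing smooth compactly supported functions approximating $\mathbf{1}_A$ in $L^1(\mu)$ whose weighted total variations recover $\mathrm{Per}_\psi(A)$. This is classical in Euclidean space but must be handled carefully on a weighted Riemannian manifold; a practical route is to localize by a partition of unity and mollify in coordinate charts, using smoothness of $\psi$ to control the weight. An alternative attempt to directly commute $P_t$ with the weighted divergence $\mathrm{div}_\mu \varphi = \mathrm{div}\varphi - \nabla \psi \cdot \varphi$ does not succeed in general since no such exact intertwining is available outside the Euclidean and Ornstein--Uhlenbeck cases.
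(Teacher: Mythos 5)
Your proof is correct, and the lower bound (testing against a fixed $\varphi$, integrating by parts, and using $L^1(\mu)$-continuity of $P_t\mathbf{1}_A$ at $t=0$) is essentially identical to the paper's. For the upper bound, however, you take a genuinely different route: you approximate $\mathbf{1}_A$ by smooth functions \`a la Meyers--Serrin (Anzellotti--Giaquinta on manifolds), transfer the bound $\int_M|\nabla P_t u_k|\,d\mu\le e^{-\kappa t}\int_M|\nabla u_k|\,d\mu$ from the reinforced gradient commutation, and conclude by $L^1$-lower semicontinuity of the weighted total variation. The paper instead stays entirely on the dual side: using self-adjointness of $P_t$ it writes $\int_M\langle\nabla P_t\mathbf{1}_A,\varphi\rangle\,d\mu=\int_A P_t(\mathrm{div}\,\varphi-\nabla\psi\cdot\varphi)\,d\mu=\int_A(\mathrm{div}\,P_t\varphi-\nabla\psi\cdot P_t\varphi)\,d\mu\le\mathrm{Per}_\psi(A)$, where $P_t\varphi$ denotes the semigroup acting on vector fields and the last step uses $\|P_t\varphi\|_\infty\le1$. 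Note that your closing remark slightly misjudges this route: what is needed is not an exact commutation of $\nabla$ with $P_t$ on functions (which indeed fails outside the flat and Ornstein--Uhlenbeck cases), but the intertwining of $P_t$ with the weighted divergence through the vector-valued (Weitzenb\"ock) semigroup, which holds by construction, together with the sup-norm contraction of that semigroup supplied by the curvature condition. The trade-off is clear: the paper's argument is shorter and avoids the geometric-measure-theoretic approximation machinery, which is the one genuinely delicate ingredient your proof must import (and which you correctly identify as the main obstacle), while your argument avoids introducing the semigroup on vector fields and only ever differentiates smooth functions.
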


\begin{proof} [Proof]
First, as shown in \cite{Car-M}, one have 
\[
\mathrm{Per}_{\psi}(A) =  \sup \bigg \{ \int_{A}  (\mathrm{div} \varphi - \nabla \psi \cdot \varphi) d\mu, \, \varphi \in C^1(M, \R)\, \, \& \, \| \varphi \|_{\infty} \leq 1 \bigg \}.
\] 
Then one have, for all $\varphi \in C^1(M, \R)$ with $\| \varphi \|_{\infty} \leq 1$, by integration by parts, 
\begin{eqnarray*}
\int_A (\mathrm{div} \varphi - \nabla \psi \cdot \varphi ) d\mu & = & \lim_{t \to 0} \int_M P_t \mathbf{1}_A (\mathrm{div} \varphi - \nabla \psi \cdot \varphi ) d\mu \\
& = &  \lim_{t \to 0} \int_M \langle \nabla P_t \mathbf{1}_A,  \varphi \rangle d\mu  \leq  \lim_{t \to 0} \int_M |\nabla P_t \mathbf{1}_A | \ d\mu.
\end{eqnarray*}
Conversely, for all $\varphi \in C^1(M, \R)$ with $\| \varphi \|_{\infty} \leq 1$, by integrations by parts, and using the fact that $(P_t)_{t \geq 0}$ is self adjoint, 
\begin{eqnarray*}
\int_M \langle \nabla P_t \mathbf{1}_A,  \varphi \rangle d\mu  & = & \int_A P_t (\mathrm{div} \varphi - \nabla \psi \cdot \varphi ) d\mu  \\
& = & \int_A  (\mathrm{div} P_t (\varphi) - \nabla \psi \cdot P_t(\varphi) ) d\mu  \leq  \mathrm{Per}_{\psi}(A),
\end{eqnarray*}
the last inequality following from the fact that $\| P_t \varphi\|_{\infty} \leq 1$. Since 
\[
\int_M |\nabla P_t \mathbf{1}_A | \ d\mu = \mathrm{sup}_{\varphi, \, \| \varphi\|_{L^{\infty}} \leq 1} \int_M \langle \nabla P_t \mathbf{1}_A,  \varphi \rangle d\mu = \mathrm{sup}_{\varphi \in C^1(M, \R), \, \| \varphi\|_{L^{\infty}} \leq 1} \int_M \langle \nabla P_t \mathbf{1}_A,  \varphi \rangle d\mu, 
\]
it concludes the proof.

\end{proof}

As an immediate consequence, this definition of boundary is the limit in Bobkov semigroup functional, i.e.  
\[
\frac{1}{\sqrt{\kappa}} \mathrm{Per}_{\psi}(A) = \lim_{t \to 0} \int_{M} \sqrt{ \mathcal{I}_{\gamma}^2 (P_t \mathbf{1}_A) + \frac{1}{\kappa} |\nabla P_t \mathbf{1}_A |^2 } d\mu.
\]
Now define the isoperimetric problem with respect to this definition of boundary, i.e. define the following isoperimtric profile 
\[
\tilde{\mathcal{I}}_{(M,g, \mu)}(v) = \mathrm{inf}\{ \mathrm{Per}_{\psi}(A), \, A \subset M, \, \mu(A) = v \}.
\]
It is known that in general, for any Borel set $A \subset M$, $\mu^{+}(A) \geq \mathrm{Per}_{\psi}(A)$ so that this isoperimetric problem is \textit{a priori} weaker than the one we have formulated. However, with respect to this new notion of boundary, for any weighted manifold endowed with probability (finite) measure the infimum is a minimum. This fact is not hard to prove, see \cite{R-R}. Since for each $v$ there exists an extremal set of measure $v$, say $A$, by the above fact, $t \mapsto P_t \mathbf{1}_A$ has to be constant along Bobkov functional. This in turns implies rigidity by the preceding section.

It is well known that when the set $A$ is ``regular enough'' (say with $C^2$ smooth boundary $\partial A$), its relative perimeter agrees with its Minkowski content and both are equal to $ \int_{\partial A} \psi(x) d\mathcal{H}^{n-1} (x)$. In \cite{C-K}, the authors define indeed the Gaussian isoperimetric problem with the function $\tilde{\mathcal{I}}_{(\R^n, \gamma_n)}(v)$. In this space, the isoperimetric problems with respect to any kind of boundary agree. This property has been used by Carlen and Kerce, although somewhat hidden in \cite{C-K}. This enables them to establish full characterisation of isoperimetric sets, without any smoothness assumptions.  

Actually the conclusion remains true in the more general setting of weighted Riemannian manifold. That is, in weighted Riemannian manifold there is a minimizer such that $\mathrm{Per}_{\psi}(A)$ agrees with the Minkowski content $\mu^{+}(A)$ and moreover both are equal to $ \int_{\partial A} \psi(x) d\mathcal{H}^{n-1} (x)$. In fact, there exists a minimizer $A$ whose boundary $\partial A$ is the union of a regular part $\partial_r A$ and a closed set of singularity $\partial_s A$, empty is the dimension $n$ is less or equal than $7$, and of Hausdorff dimension at most $n-8$ if $n \geq 8$. This follows from the works on geometric measure theory initiated by Almgren, Morgan and others (see \cite{Mi1}, section $2.2$, for a comprehensive discussion on this topic). We refer to \cite{Mi1} for a complete bibliography. As a result, in this setting we have that $\tilde{\mathcal{I}}_{(M,g, \mu)} = \mathcal{I}_{(M,g, \mu)}$.

The above discussion implies the following fact. Let $(M, g, e^{-\psi} d\mathrm{vol})$ a weighted Riemmannian manifold of $CD(\kappa, \infty)$ class, that is $\mathrm{Ric}_g + \nabla^2 \psi \geq \kappa$. Assume that there is equality in one point $v \in (0,1)$ in Bakry--Ledoux comparison theorem. Then there is a set $A \subset M$ of measure $v$ achieving equality in Bobkov inequality. The previous sections imply therefore a splitting theorem. Moreover, in the Gaussian direction, $\Phi^{-1} \circ {Q_t \mathbf{1}_A}$ is a linear function. Following \cite{C-K}, this implies that $A$ must be an half space. In order to be self contained, let us recall their proof that also appears in \cite{M-N1}. 

\begin{lemma}
Let $A$ be a subset of $\R$ such that $\Phi^{-1} \circ {Q_t \mathbf{1}_A}$ is a linear function. Then $A$ is an half-space.
\end{lemma}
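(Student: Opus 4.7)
The plan is to exploit the ansatz $Q_t\mathbf{1}_A(x) = \Phi(a(t)x + b(t))$ that the hypothesis delivers for every $t > 0$ (the linear structure being preserved along the Ornstein--Uhlenbeck flow, as the Mehler formula shows) and to read off ODEs for $a(t), b(t)$ from the heat equation $\partial_t u = u'' - xu'$ satisfied by $u = Q_t\mathbf{1}_A$. A direct substitution, using $\varphi'(z) = -z\varphi(z)$ and matching the coefficients of $x$ on both sides, yields the scalar system $a' = -a(a^2 + 1)$ and $b' = -a^2 b$.

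Next I would solve this system explicitly. The first equation is separable (via $y = 1/a^2$) and integrates to $a(t)^2 = 1/(Ce^{2t} - 1)$ for some constant $C > 0$, while combining the two equations shows $(\log|b/a|)' = 1$, so $b(t)/a(t) = K_0 e^t$ for another constant $K_0$. The free constants must then be pinned down by the boundary behavior as $t \to 0^+$: since $Q_t \mathbf{1}_A \to \mathbf{1}_A$ in $L^1(\gamma)$ (hence a.e.\ along a subsequence), and $\mathbf{1}_A$ is $\{0,1\}$-valued while every $\Phi(a(t)x+b(t))$ takes values strictly in $(0,1)$, one is forced to $|a(t)| \to \infty$ as $t \to 0^+$. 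This selects $C = 1$, giving $a(t) = \pm e^{-t}/\sqrt{1-e^{-2t}}$ and correspondingly $b(t) = \pm \beta/\sqrt{1-e^{-2t}}$ for some $\beta \in \R$, so that in the first sign case
\[
Q_t \mathbf{1}_A(x) \;=\; \Phi\!\left(\frac{\beta - e^{-t}x}{\sqrt{1-e^{-2t}}}\right).
\]

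This is exactly the closed-form expression for $Q_t \mathbf{1}_{(-\infty,\beta)}(x)$, so $Q_t \mathbf{1}_A = Q_t \mathbf{1}_{(-\infty,\beta)}$. I would finish by invoking the injectivity of $Q_t$ on $L^2(\gamma)$, immediate from its spectral decomposition in Hermite polynomials (eigenvalues $e^{-nt}$, $n \geq 0$, all non-zero). This gives $\mathbf{1}_A = \mathbf{1}_{(-\infty,\beta)}$ in $L^2(\gamma)$, and hence $A$ coincides with a half-line up to a $\gamma$-null set; the opposite sign choice for $a(t)$ yields the complementary half-line $(\beta, \infty)$.

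The main obstacle is the boundary analysis at $t = 0^+$. The ODE system admits a one-parameter family of solutions, and only the purely qualitative input that the limit must be $\{0,1\}$-valued (forcing $|a(t)| \to \infty$) selects the correct integration constant; without this step, $a(t)$ could remain bounded at $t = 0^+$ and the conclusion would fail. For completeness, if one only assumes the hypothesis at a single time $t > 0$, the same conclusion can be reached via a Fourier-analytic argument applied to the convolution identity $(\mathbf{1}_A * \varphi_\sigma)(u) = \Phi(\alpha u + b)$ obtained after the rescaling $u = e^{-t}x$: the formula for $\widehat{\mathbf{1}}_A$ must simultaneously define a tempered distribution (ruling out $\alpha^2 < 1/\sigma^2$) and correspond to a $\{0,1\}$-valued function (ruling out $\alpha^2 > 1/\sigma^2$, which would make $\mathbf{1}_A$ analytic), leaving only $\alpha^2 = 1/\sigma^2$ and hence a half-line. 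The ODE route is, however, more in line with the semigroup spirit of the rest of the paper.
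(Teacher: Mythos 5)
Your argument is correct, and it takes a genuinely different route from the paper. The paper's proof is a comparison argument: it takes a half-space $H$ with $\gamma(H)=\gamma(A)$, computes $Q_t\mathbf{1}_H=\Phi(\langle k_t a, x\rangle+k_t b)$ with $k_t=(1-e^{-2t})^{-1/2}$, and rules out $k_t\neq |a_t|$ by a time-shift: if the slopes disagreed one could write $Q_t\mathbf{1}_A=Q_{t+s}\mathbf{1}_{H_1}$ for some half-space $H_1$ and $s>0$, whence $\mathbf{1}_A=Q_s\mathbf{1}_{H_1}$ by injectivity of $Q_t$, contradicting smoothness of $Q_s$; it then concludes $A=H$ again by injectivity. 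You instead substitute the ansatz $\Phi(a(t)x+b(t))$ into $\partial_t u=Lu$, solve the resulting system $a'=-a(a^2+1)$, $b'=-a^2b$ (your computation checks out: $y=1/a^2$ satisfies $y'=2y+2$, giving $a^2=1/(Ce^{2t}-1)$, and $(\log|b/a|)'=1$), and pin down $C=1$ from the requirement that the a.e.\ limit at $t\to 0^+$ be $\{0,1\}$-valued; the endgame (identification with $Q_t\mathbf{1}_{(-\infty,\beta)}$ plus injectivity of $Q_t$ via the Hermite expansion) coincides with the paper's. Your approach is more explicit --- it exhibits the whole trajectory $(a(t),b(t))$ and makes transparent that the half-space solutions are exactly the trajectories that blow up at $t=0$ --- at the cost of requiring the linearity hypothesis on an interval of times rather than at a single $t$ (which is what the paper's application actually provides, and which your Fourier-analytic remark covers in the single-time case); the paper's version is shorter and is phrased directly in $\R^n$. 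One cosmetic point: your opening justification that ``the linear structure is preserved along the flow'' is not needed as a separate step if the hypothesis is read, as in the paper, as holding for every $t>0$.
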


\begin{proof} [Proof]
Let $f_t$ to be $Q_t \mathbf{1}_A$. Then by hypothesis $
f_t = \Phi (\langle a_t, x + b_t \rangle).$ Now let $H =\{x \in \R^n, \langle a, x \rangle + b \geq 0 \}$ with $a \in \mathbb{S}^{n-1}, \, b \in \R$ such that $\gamma(H) = \gamma(A)$. Define $k_t = \frac{1}{\sqrt{1-e^{-2t}}}$. Then a fairly easy calculation shows that $Q_t \mathbf{1}_H = \Phi(\langle k_t a, x + b\rangle)$. If $k_t > |a_t|$, we can find a $s >0$ such that $k_{t+s} = |a_t|$ and then there is an half space $H_1$ such that $Q_{t+s} \mathbf{1}_{H_1} = Q_t \mathbf{1}_A$ so that $Q_s \mathbf{1}_{H_1} = \mathbf{1}_A$. This is impossible since $Q_t f$ is smooth for every $t >0$. Therefore $k_t \leq |a_t|$ and for the same smoothness assumptions necessary $k_t = |a_t|$. Then $A = H$ since the semigroup $(Q_t)_{t \geq 0}$ is one-to-one.

\end{proof}

Summarizing, we have the following theorem.

\begin{theorem}
Let $(M, g, \mu)$ a weighted Riemmannian manifold of $CD(\kappa, \infty)$ class. If there exists a $v \in (0,1)$ such that $\mathcal{I}_{(M, \mu)} (v) = \mathcal{I}_{(\R, \gamma_{\kappa})} (v)$ then $(M, g, \mu) = (\R, | \cdot |, \gamma_{\kappa}) \times (M', g',\mu')$. Moreover the isoperimetric minimizers are of the form $( - \infty, \Phi_{\kappa}^{-1}(v)) \times M'$.
\end{theorem}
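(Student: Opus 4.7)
The plan is to combine the existence of an isoperimetric minimizer (discussed just above the theorem statement, via the geometric-measure-theoretic work of Almgren, Morgan and others) with the semigroup monotonicity that underlies the Bakry--Ledoux comparison theorem. Fix $v \in (0,1)$ with $\mathcal{I}_{(M,\mu)}(v) = \sqrt{\kappa}\,\mathcal{I}_{\gamma}(v)$ and pick a Borel set $A$ with $\mu(A) = v$ realizing the infimum. By the equivalence $\mu^+(A) = \mathrm{Per}_\psi(A) = \lim_{t \to 0}\int_M |\nabla P_t \mathbf{1}_A|\,d\mu$ established in equation \eqref{bord}, one has
\[
\lim_{s\to 0}\int_M \sqrt{\mathcal{I}_\gamma^2(P_s \mathbf{1}_A) + \kappa^{-1}\Gamma(P_s \mathbf{1}_A)}\,d\mu = \kappa^{-1/2}\mathrm{Per}_\psi(A) = \mathcal{I}_\gamma(v),
\]
while ergodicity yields $\lim_{s\to\infty}\psi(s) = \mathcal{I}_\gamma(v)$. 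Since $\psi$ is non-increasing by the proposition giving $-\psi'(s) \geq \int \mathcal{I}_\gamma(P_s \mathbf{1}_A)(\Gamma_2-\kappa\Gamma)(\Phi^{-1}\circ P_s \mathbf{1}_A)/(1+\Gamma(\Phi^{-1}\circ P_s \mathbf{1}_A))^{3/2}\,d\mu$, it must be constant on $(0,\infty)$.

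Because the integrand above is pointwise non-negative by the $CD(\kappa,\infty)$ condition, constancy of $\psi$ forces
\[
\Gamma_2(\Phi^{-1}\circ P_s \mathbf{1}_A) = \kappa\,\Gamma(\Phi^{-1}\circ P_s \mathbf{1}_A)
\]
almost everywhere, for every $s > 0$. For each such $s$, the function $P_s \mathbf{1}_A$ is smooth and valued in $(0,1)$ (since $v \in (0,1)$), so $\Phi^{-1}\circ P_s \mathbf{1}_A$ is a smooth non-constant extremizer satisfying the Stein equation. Theorem \ref{ThmObatab} then applies and gives the splitting
\[
(M,g,e^{-\psi}d\mathrm{vol}) = (\R,|\cdot|,\gamma_\kappa)\times(M',g',e^{-\psi'}d\mathrm{vol}'),
\]
with $\Phi^{-1}\circ P_s \mathbf{1}_A$ an eigenfunction of the full operator $L = L_\R + L_{M'}$ of eigenvalue $-\kappa$. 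Because the Sobolev/spectral gap constant of $M'$ is again $\kappa$, any such eigenfunction must depend only on the Gaussian coordinate and be affine there, so $\Phi^{-1}\circ P_s \mathbf{1}_A(x,y) = a(s)x + b(s)$.

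From this one deduces that $P_s \mathbf{1}_A$ itself is a function of $x$ alone for every $s > 0$; letting $s \to 0$ in $L^1(\mu)$, the limit $\mathbf{1}_A$ is also a function of $x$ alone, so $A = B\times M'$ for a Borel set $B \subset \R$ with $\gamma_\kappa(B) = v$. Applying the last lemma of Section \ref{sec:rigi} to $B$ in the one-dimensional Gaussian factor (where $\Phi^{-1}\circ Q_s^{\gamma_\kappa}\mathbf{1}_B$ is linear by the previous step) forces $B$ to be a half-line, and the measure constraint pins it down as $B = (-\infty,\Phi_\kappa^{-1}(v))$, yielding the announced form of the minimizer.

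The main obstacle is the transition in the third step from ``$\Phi^{-1}\circ P_s \mathbf{1}_A$ is affine in the Gaussian coordinate (with coefficients possibly depending on the $M'$ variable)'' to ``$P_s \mathbf{1}_A$ depends only on the Gaussian coordinate''. This requires using that, by Theorem \ref{ThmObatab} and Stein's lemma, the Gaussian splitting already exhausts the eigenspace of $L$ for the eigenvalue $-\kappa$: any $M'$-dependence of $a(s)$ or $b(s)$ would produce an eigenfunction of $L_{M'}$ with eigenvalue at most $-\kappa$, contradicting the fact that $(M',g',\mu')$ inherits $CD(\kappa,\infty)$ and hence a spectral gap at least $\kappa$, with the value $\kappa$ only attained via a further Gaussian splitting that could be factored out. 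A careful justification of this point, together with the approximation $P_s \mathbf{1}_A \to \mathbf{1}_A$ in $L^1$, is what closes the argument.
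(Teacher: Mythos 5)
Your proposal follows essentially the same route as the paper: existence of a perimeter minimizer via the geometric measure theory discussion, identification of the weighted perimeter with the $t\to 0$ limit of the Bobkov semigroup functional, constancy of that functional forcing $(\Gamma_2-\kappa\Gamma)(\Phi^{-1}\circ P_s\mathbf{1}_A)=0$, then Stein's lemma and Theorem \ref{ThmObatab} for the splitting, and finally the Carlen--Kerce lemma to identify the half-space. The only difference is that you explicitly flag and argue the passage from ``$\Phi^{-1}\circ P_s\mathbf{1}_A$ is an eigenfunction'' to ``$A=B\times M'$ with $B$ a half-line,'' a step the paper leaves implicit; your justification of it is consistent with the paper's framework.
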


This theorem is not new : indeed, it has been already proved by Franck Morgan \cite{Mo}) using more geometrical arguments. This proof only use semigroup arguments. 

Notice also that in the Gaussian space, the above lemma proves in the same manner that halfspaces are the only minimizers of the reverse Bobkov inequality \eqref{reverseBobkov}. However, by integration by parts, \eqref{reverseBobkov} expresses that halfspaces maximize the Euclidean norm of barycenters of sets of given measure. This fact is far more general and rather easy to prove (see \cite{B-C-F}). 

\section{Second order Poincar\'e type inequalities}

One can notice than in each of the proofs of Section~\ref{sec:geofun}, arises the common non negative factor $\Gamma_2 - \kappa \Gamma$. Thus, in view of studying robustness of geometric or functional inequalities, it would be worth to give a lower bound of the quantity $\int_{E} (\Gamma_2 - \kappa \Gamma)(f) d\mu$ in terms of a distance to extremal functions.

In this section, we derive a second order Poincar\'e type inequality for the spheres and a subclasses of log-concave measures. For sake of simplicity, we consider $CD(1, \infty)$ spaces although it works more generally for $CD(\kappa, \infty)$, $\kappa >0$. 

We have seen in Section~\ref{sec:stein} that the equality $\Gamma_2 (f) = \Gamma (f)$ imply $Lf + f =0$. With little additional effort, we can give a lower bound on $\int_E (\Gamma_2  - \Gamma) (f) d\mu$ in term of the $L^2$ distance of the operator $L + Id$. Indeed, let $(E, \mu, \Gamma)$ be a $CD(1, \infty)$ space. Fix a centered function $f$ on the Dirichlet domain. By the condition $\Gamma_2 \geq \Gamma$ and the spectral gap inequality, we get
\[
\int_E (Lf)^2 d\mu = \int_E \Gamma_2 (f) d\mu \geq \int_{E} \Gamma (f) d\mu \geq \int_{E} f^2 d\mu.
\] 
Thus
\begin{eqnarray*}
\int_E (\Gamma_2 - \Gamma) (f) d\mu & = & 
\int_E (Lf)^2 d\mu - \int_E f (-Lf) d\mu \\
& \geq &  \frac{1}{2} \bigg( \int_E  (Lf)^2 d\mu - 2 \int_E f (-Lf) + \int_E f^2 d\mu  \bigg)  \\
&= &  \frac{1}{2} \| f + Lf \|_{L^2(\mu)}^2. 
\end{eqnarray*}

As a standard example, let $(E, \mu, \Gamma)$ be the Gaussian space. Then, the eigenvectors of the operator $-L$ are given by the Hermite polynomials $h_{\alpha}$ associated to the eigenvalues $|\alpha | = \alpha_1 + \cdots + \alpha_n$. If $f_k = \sum_{\alpha, \, |\alpha| = k} \langle f, h_{\alpha} \rangle h_{\alpha}$ designs the chaos of order $k$, $k \in \N$, we have that  
\[
\| f + Lf \|_{L^2(\mu)}^2 = \sum_{k \geq 2} (k-1)^2  f_k ^2 \geq \sum_{k \geq 2}  f_k^2 = \| f - \Pi_1 f \|_2^2,
\] 
with $\Pi_1$ being the projection on linear functions. Since in the Gaussian space, $\Gamma_2 - \Gamma$ is simply the Hilbert-Schmidt norm of the Hessian, this implies in particular a second order Poincar\'e inequality. This inequality appears in a sharp form in \cite{M-N1} (i.e. without the factor $2$) and is used by the authors to prove robust dimension free Gaussian isoperimetry.

Recent results (\cite{CE}, \cite{Mi2}) allow us to extend this second order Poincar\'e inequality for more general log-concave measures with Hessian bounded from below and above i.e. $1 \leq \nabla^2 V \leq K$. Indeed following E. Milman's result, the spectrum $(\lambda_k)_{k \geq 0}$ of the operator $L = \Delta - \nabla V \cdot \nabla $ satisfies $\lambda_k \geq k$ for all $k \in \N$. As a result, $(\lambda_k-1)^2 \geq \frac{1}{2} \lambda_k$ for all $k \geq 2$ and so we have
\[
\| f + Lf \|_2^2  = \sum_{k \geq 2} (\lambda_k-1)^2 f_k^2 \geq \frac{1}{2} \sum_{k \geq 2} \lambda_k f_k^2  = \frac{1}{2} \left( \int_{\R^n} |\nabla f |^2 d\mu - \mathrm{Var}_{\mu} (f) \right).
\]
The right hand-side is simply the spectral gap deficit, say $\delta_{SG}$. As a consequence of a more general result concerning variance Brascamp--Lieb inequality of Cordero-Erausquin \cite{CE}, there exists $C = C(K) >0$ $v_0 \in \R^n$ and $t_0 \in \R$ such that $\delta_{SG} \geq C \|f - \nabla V \cdot v_0 + t_0 \|_2^2$ and so 
\[
\int_{\R^n} (\| \nabla^2 f \|_{HS}^2 + (\nabla^2 V - Id)(\nabla f, \nabla f) ) e^{- V(x)}dx = \int_{\R^n} (\Gamma_2 - \Gamma) (f) d\mu \geq C \|f - \nabla V \cdot v_0 + t_0 \|_2^2.
\] 
Here $t_0 = - \int_{\R^n} f d\mu = - \mathbb{E}_{\mu} (f)$ and $v_0 = \int_{\R^n} x (f(x) - \mathbb{E}_{\mu} (f) ) d\mu(x)$. This can be seen as a generalized second order Poincar\'e inequality.

Other interesting instances are given by the Euclidean spheres $(\mathbb{S}_1^n, g_1)$ satisfying the $CD(1,n)$ condition (that is of radius $\sqrt{n-1}$ with this normalization) with the associated Laplace--Beltrami operator $\Delta$. The eigenvalues of $- \Delta$ are given by $\lambda_k = k(n+k-1)/(n-1)$ and it is clear that whenever $k \geq 2$, $(\lambda_k-1)^2 \geq 1$. Thus
\[
\int_{\mathbb{S}_1^n} (\Gamma_2 - \Gamma) (f) d\mu  = \int_{\mathbb{S}_1^n} \| \nabla^2 f\|_{HS}^2 d\mu  \geq \frac{1}{2} \| f + \Delta f \|_2^2 \geq  \frac{1}{2}  \sum_{k \geq 2} f_k^2.
\]
Besides, it is well known that the corresponding eigenvectors $(Z_k)_{|k| \geq 1}$ are given by the spherical harmonics (see e.g. \cite{M-W}). The eigenvectors associated to $\lambda_1$ are simply given by the coordinates functions. As a consequence, similarly as in the Gaussian space, the right hand side corresponds to $\|f - \Pi_1 f\|_2^2$ where $\Pi_1 f$ is the projection of $f$ on linear functions. 

This extention of the Gaussian case to more general log-concave measures and the Euclidean spheres is important toward the generalization of the main result of Mossel and Neeman \cite{M-N1} about quantitative dimension free isoperimetry for the Gaussian measure. The next section is devoted to this problem.
 
\section{Quantitative estimates in the Bakry--Ledoux comparison theorem.} \label{sec:def}

Mossel and Neeman's scheme of proof is directly inspired by the heat-flow proof of Bobkov's inequality of Bakry and Ledoux. In Section~\ref{sec:geofun}, we have build the first step in expressing the derivative of the Bobkov functionnal for general spaces satisfying the $CD(1, \infty)$ condition in the same manner as in the Gaussian space. One can therefore ask if is it possible to push the arguments of \cite{M-N1} to more general $CD(1, \infty)$-spaces. On those spaces, cases of near equality have to imply that both near extremal sets are close to half spaces and that the subsequent space is close to the Gaussian space in an appropriate sense. There is some vagueness in this affirmation because it seems that there is no canonical way to define a good notion of closeness with respect to the Gaussian space.

In the case of log-concave probability measures, the isoperimetric deficit on the real line (where by the Bobkov's result extremals sets are half lines) appears in a sharp form in the work of de Castro \cite{DC} for more general measures than the ones satisfying the $CD(1, \infty)$ condition. On $\R^n$ ($n >1$) however, for other log-concave measures than the Gaussian, dimension free estimates are left open. As we saw in the preceding sections, when equality holds in the Bobkov inequality, we have that for every $t > 0$, $(\Gamma_2 - \Gamma) (\Phi^{-1} \circ P_t \mathbf{1}_A) = 0 $ so that it forces $A$ to be an half space. Besides, this implies that $\mu$ must have a one dimensional Gaussian marginal. It is a very natural guess that in the above setting, a subset $A \subset \R^n$ such that $\mu^{+}(A) = \mathcal{I}_{\gamma}(\mu(A)) + \delta$ must be ``close'' to an half space when $\delta$ is (very) small. 

An other interesting instance is given by the $n$ dimensional Euclidean spheres of radius $\sqrt{n-1}$ - denoted by $\mathbb{S}_1^n$ - equipped with the uniform measure for large values of $n$. Indeed, although there is no equality cases in the Bobkov inequality, the Poincar\'e lemma indicates that these spaces are approximately Gaussian.  Moreover, the isoperimetric sets are spherical caps and therefore of the same shape as the isoperimetric sets for the Gaussian space.

It is a consequence of \cite{Bar}, Proposition 11, that, point-wise, $\mathcal{I}_{\{\mathbb{S}_1^n, g_1, \mu\}}^{\frac{n}{n-1}} \leq \mathcal{I}_{\gamma}$. Therefore we get the estimates valid for all $v \in (0,1)$,
\[
\mathcal{I}_{\{\mathbb{S}_1^n, g_1, \mu\}}^{\frac{n}{n-1}}(v) \leq \mathcal{I}_{\gamma}(v) \leq \mathcal{I}_{\{\mathbb{S}_1^n, g_1, \mu\}} (v),
\]
and since for all $t \in (0, 1)$, $t -t^{\frac{n}{n-1}} = O(n^{-1})$, it implies that for all $v \in (0,1)$, 
\[
\mathcal{I}_{\{\mathbb{S}_1^n, g_1, \mu\}}(v) - \mathcal{I}_{\gamma}(v) = O(n^{-1}).
\] 
Toward robust isoperimetry, this is a good insight that one can consider it on the spheres $\mathbb{S}_1^n$ for some large $n$. 

It is well known that for small sets, the spherical isoperimetric profile is equivalent to the Euclidean one, i.e. to the map $x \mapsto x^{(n-1)/n}$ (up to some constant $a_n$), whereas the Gaussian isoperimetric profile behaves as the map $x \mapsto x \sqrt{-2 \log x}$. This indicates that the Bobkov inequality is a pretty bad approximation for the spherical isoperimetry for small (or large) sets. The optimal Bobkov inequality over $\mathbb{S}_1^n$, as recalled in Section~\ref{sec:infdim} (and after rescaling), is
\[
\mathcal{I}_{\gamma} \bigg(\int_{\mathbb{S}_1^n} f d\mu \bigg) \leq 
\int_{\mathbb{S}_1^n} \sqrt{ \mathcal{I}_{\gamma}^2 (f) + \frac{n-1}{c_n^2}|\nabla f |^2 } d\mu,
\]
where $c_n = \sqrt{2}\frac{\Gamma(\frac{n+1}{2})}{\Gamma (\frac{n}{2})}$. Using Stirling's formula, it is easily seen that $\frac{n-1}{c_n^2} = 1 - (2n)^{-1} + o (n^{-1})$, or equivalently $  \mathcal{I}_{\gamma}(1/2) = (1 - (2n)^{-1} + o (n^{-1})) \mathcal{I}_{\{\mathbb{S}_1^n, g_1, \mu\}} (1/2)$. Therefore, the preceding estimate $\mathcal{I}_{\{\mathbb{S}_1^n, g_1, \mu\}}(v) - \mathcal{I}_{\gamma}(v) = O(n^{-1})$ is tight.

In what follow, let $(E, \mu, \Gamma)$ be a $CD(1, \infty)$ space, and let $(P_t)_{t \geq 0}$ be its underlying semigroup.

\begin{hyp} \label{hypo}
Assume that there exists some positive constants $C, \eta, t_0$ and $\varepsilon_0 \in (0,1/2)$ such that, for all $t \in (0,t_0), \, \varepsilon \in (0, \varepsilon_0)$, the following upper-bound holds :  
\[
\int_{\{P_t f \leq \varepsilon \} } (\Gamma_2 - \Gamma) (\Phi^{-1} (P_t f)) d\mu \leq \frac{C}{t^\eta} \int_{\{P_t f \leq \varepsilon \}  }  (\Phi^{-1} (P_t f))^2 d\mu.
\]
\end{hyp}

This inequality may appear weird, but this is a kind of reverse isoperimetric inequality of second order. On the Gaussian space the point-wise estimate 
\[
(\Gamma_2 - \Gamma) (\Phi^{-1} (P_t f)) \leq  \frac{C(\Phi^{-1} (P_t f))^2}{t^\eta} 
\]
is satisfied with exponent $\eta = 2$ on the set $\{ P_t f \leq 1/2 \}$. Two different proofs has been given by Mossel and Neeman. Either directly by using the expression of the Ornstein-Uhlenbeck kernel (see \cite{M-N1}), either by pushing to the second order the arguments of the proof that amounts to the reverse Sobolev logarithmic inequality (see \cite{Nee}). However in the first case computations are specifics to the Ornstein-Uhlenbeck semigroup. In the second case, one need to use an exact commutation between $P_t$ and $\nabla$, which is no longer true for any other $CD(1, \infty)$ space than the Gaussian space. Concerning the log-concave setting $(\R^n, \mu)$, $d\mu(x) = e^{-V(x)}dx$, $1 \leq \nabla^2 V \leq K$, even under additional restrictions (such as $\Gamma_3  \geq 0$), it seems that we can reach a similar bound but crucially $C$ depends on the dimension $n$.

We will make use to the fact that $t$ goes to $0$, proving a similar point-wise upper bound - and therefore the validity of the hypothesis - for some $t_1$ and $\eta = 4$ in the case of the Euclidean sphere $\mathbb{S}_1^n$ endowed with uniform measure $\mu$. For that we will use short-times estimates on the spaces derivatives of the heat-kernel (see \cite{M-S}, \cite{Eng}). Such bounds are not yet proven in the non compact case. For sake of clarity we postpone the proof of this technical lemma to the appendix. 

\vspace{2mm}

We now state the main result of this section.

\begin{theorem} \label{thmrisosph}
Let $(\mathbb{S}_1^n,\mu)$ with $\mu$ the uniform probability measure. For each measurable subset $A$ of $\mathbb{S}_1^n$ such that
\[
\mu^+(A) = \mathcal{I}_{\gamma} (\mu(A)) + \delta,
\]
there exists a spherical cap $H$ and a positive constant $c \in (0.49, 1/2)$ such that  
\[
\mu(A \Delta H) \leq O( |\log \delta|^{-c}).
\]
\end{theorem}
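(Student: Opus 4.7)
The plan is to mirror the Mossel--Neeman heat-flow argument for Gaussian robust isoperimetry on the sphere $\mathbb{S}_1^n$, using three ingredients developed earlier in the paper: the derivative formula for the Bobkov functional (the proposition of Section~\ref{sec:geofun}), the second-order Poincar\'e inequality on $\mathbb{S}_1^n$ established in Section~7 (namely $\int(\Gamma_2-\Gamma)(g)\,d\mu\geq \tfrac12\|g-\Pi_1 g\|_{L^2}^2$, where $\Pi_1$ projects onto first-order spherical harmonics), and Hypothesis~\ref{hypo}, whose validity on the sphere with exponent $\eta=4$ is the appendix lemma announced in Section~\ref{sec:def}.

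First I would take $f=\mathbf{1}_A$ and set $\psi(s)=\int_{\mathbb{S}_1^n}\sqrt{\mathcal{I}_\gamma^2(P_s f)+\Gamma(P_s f)}\,d\mu$. Integrating the derivative formula from $0$ to $\infty$ and using ergodicity of $(P_s)$ together with the fact recalled in Section~\ref{sec:def} that $\mathcal{I}_{(\mathbb{S}_1^n,\mu)}-\mathcal{I}_\gamma=O(1/n)$,
\[
\int_0^\infty\int_{\mathbb{S}_1^n}\frac{\mathcal{I}_\gamma(P_s f)\,(\Gamma_2-\Gamma)(\Phi^{-1}\circ P_s f)}{(1+\Gamma(\Phi^{-1}\circ P_s f))^{3/2}}\,d\mu\,ds\;\leq\;\delta+O(1/n).
\]
For a small fixed time $t>0$, Hypothesis~\ref{hypo} provides the pointwise control $\Gamma(\Phi^{-1}\circ P_t f)\leq C t^{-4}(\Phi^{-1}\circ P_t f)^2$ on the tails $\{P_t f\notin[\varepsilon,1-\varepsilon]\}$, while on the bulk $\mathcal{I}_\gamma(P_t f)$ is bounded below by a universal constant. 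Splitting the double integral accordingly, bounding the denominator by a polynomial factor in $t^{-1}$ on the tails, and averaging in $s$ over a short window of width $t/2$, one extracts
\[
\int_{\mathbb{S}_1^n}(\Gamma_2-\Gamma)(\Phi^{-1}\circ P_t f)\,d\mu\;\leq\;C t^{-\alpha}\bigl(\delta+O(1/n)\bigr)
\]
for an explicit exponent $\alpha>0$ governed by $\eta=4$.

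Applying the second-order Poincar\'e inequality to $g=\Phi^{-1}\circ P_t f$ then yields $\|g-\Pi_1 g\|_{L^2}^2\leq C t^{-\alpha}(\delta+O(1/n))$, so $P_t f$ is close in $L^2$ to $\Phi(\langle a_t,\cdot\rangle+b_t)$ for some vector $a_t$ and scalar $b_t$. By the lemma of Section~\ref{sec:rigi}, the latter agrees with $P_{t'}\mathbf{1}_H$ for a spherical cap $H=\{\langle a,\cdot\rangle+b\geq 0\}$ and some $t'\geq 0$. The reverse Poincar\'e estimate \eqref{reversesg} yields $\|f-P_t f\|_1=O(\sqrt t\,\mu^+(A))$, and applying it also to $\mathbf{1}_H$ gives $\mu(A\Delta H)\leq O(\sqrt t+t^{-\alpha/2}\sqrt\delta)$; optimizing $t$ as an appropriate negative power of $|\log\delta|$ then produces the announced rate $\mu(A\Delta H)=O(|\log\delta|^{-c})$, the value $c\in(0.49,1/2)$ reflecting the $t^{-4}$ rather than $t^{-2}$ in Hypothesis~\ref{hypo} on the sphere (as opposed to the Gaussian setting where one recovers $c=1/2$). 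The main obstacle is precisely this final transfer: keeping all $t$-dependent losses tight enough to remain in the announced window for $c$ requires a careful \emph{localized} use of the second-order Poincar\'e inequality and of the reverse isoperimetric Proposition~\ref{reverseiso}, rather than the rough estimates just sketched, and it is also here that the $O(1/n)$ correction forces the $\delta_n=\max(\delta,1/n)$ substitution of Theorem~\ref{thmdefsph}.
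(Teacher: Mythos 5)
Your overall strategy is the paper's own (heat-flow derivative of the Bobkov functional, second-order Poincar\'e inequality on the sphere, Hypothesis~\ref{hypo} for the tails, time reversal via the reverse Poincar\'e inequality), but two steps as written do not go through. First, the central quantitative estimate is misstated. The deficit only controls the integral of $(\Gamma_2-\Gamma)(h_t)$ \emph{weighted} by $\mathcal{I}_\gamma(P_tf)$, and on the bulk $\{\varepsilon\le P_tf\le 1-\varepsilon\}$ this weight is bounded below by $\mathcal{I}_\gamma(\varepsilon)$, which is \emph{not} a universal constant: $\varepsilon$ must be sent to $0$ with $\delta$ (the paper takes $\varepsilon=\sqrt\delta$), precisely because the tail contribution you add back is not controlled by $\delta$ at all. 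Hypothesis~\ref{hypo} (which bounds $\Gamma_2-\Gamma$, not $\Gamma$ as you wrote) only converts the tail integral into $Ct^{-4}\int_{\{P_tf\le\varepsilon\}}h_t^2\,d\mu$; you still need the sub-Gaussian concentration of $h_t$ (it is $(2t)^{-1/2}$-Lipschitz by Proposition~\ref{reverseiso}) to bound this by $Ct^{-5}e^{-t(\Phi^{-1}(\varepsilon))^2}$. The resulting estimate is
\[
\int_E(\Gamma_2-\Gamma)(h_t)\,d\mu\;\le\;C\Bigl(\frac{\delta}{t^{5/2}\,\mathcal{I}_\gamma(\varepsilon)}+\frac{1}{t^5}\,e^{-t(\Phi^{-1}(\varepsilon))^2}\Bigr),
\]
not $Ct^{-\alpha}\delta$. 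This matters: your simplified bound would, upon optimizing $t$, yield a \emph{power} of $\delta$, which is not what this method gives; it is the additive exponential tail term that forces $t$ to be only logarithmically small in $\delta$ (one needs $t(\Phi^{-1}(\varepsilon))^2\sim t|\log\delta|\to\infty$) and hence produces the rate $|\log\delta|^{-c}$ with $c<1/2$. Dropping that term hides the entire mechanism behind the exponent $c$.

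Second, the return to sets is wrong as stated. On the sphere, $\Phi(\Pi_1h_t)$ is \emph{not} of the form $P_{t'}\mathbf{1}_H$ for a spherical cap $H$; the lemma of Section~\ref{sec:rigi} is specific to the Ornstein--Uhlenbeck semigroup and to exact equality, so you cannot invoke it here. The paper instead rounds: with $H=\{\Pi_1h_t\ge0\}$ one has $\mu(A\Delta H)\le\int|\mathbf{1}_A-\Phi(\Pi_1h_t)|\,d\mu$ (the generalization of Lemma~5.2 of Mossel--Neeman), combined with the triangle inequality, the $1$-Lipschitz property of $\Phi$, and the reverse Poincar\'e bound $\|f-P_tf\|_1\le\sqrt{2t}\,\|\sqrt{\Gamma(f)}\|_1$ applied only to $f$ (a smooth approximation $P_s\mathbf{1}_A$), not to $\mathbf{1}_H$. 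With these two repairs --- tracking the pair $(t,\varepsilon)$ and the exponential tail error, and replacing the identification of $\Phi(\Pi_1h_t)$ with an evolved cap by the rounding lemma --- your argument coincides with the paper's proof.
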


Recall that $\mathcal{I}_{\gamma}$ and $\mathcal{I}_{\{ \mathbb{S}_1^n, g_1, \mu\}}$ differs only by $O(n^{-1})$. Therefore Theorem \ref{thmrisosph} implies to following following corollary for the deficit on the isoperimetric problem over the Euclidean spheres.

\begin{corollary} 
Let $(\mathbb{S}_1^n,\mu)$ with $\mu$ the uniform probability measure. Then for all measurable subset $A$ of $\mathbb{S}_1^n$ such that
\[
\mu^+(A) - \mathcal{I}_{(\mathbb{S}_1^n, g_1, \mu)} (\mu(A))  =   \delta,
\]
there exists a spherical cap $H$ and a positive constant $c \in (0.49, 1/2)$ such that  
\[
\mu(A \Delta H) \leq O( |\log \delta_n|^{-c}),
\]
where $\delta_n = \max(\delta, n^{-1})$.
\end{corollary}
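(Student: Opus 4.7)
The plan is to deduce this corollary as an immediate consequence of Theorem \ref{thmrisosph}, by exploiting the fact that the spherical and Gaussian isoperimetric profiles on $\mathbb{S}_1^n$ differ only by $O(n^{-1})$. More precisely, I would first invoke the two-sided bracketing $\mathcal{I}_{(\mathbb{S}_1^n,g_1,\mu)}^{n/(n-1)}(v) \leq \mathcal{I}_{\gamma}(v) \leq \mathcal{I}_{(\mathbb{S}_1^n,g_1,\mu)}(v)$ recalled in Section~\ref{sec:def}, which, together with the elementary bound $0 \leq t - t^{n/(n-1)} = O(n^{-1})$ uniform in $t \in (0,1)$, yields
\[
0 \leq \mathcal{I}_{(\mathbb{S}_1^n,g_1,\mu)}(v) - \mathcal{I}_{\gamma}(v) \leq \frac{C}{n}
\]
for some absolute constant $C$ and all $v \in (0,1)$.

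Assuming now that $A \subset \mathbb{S}_1^n$ satisfies $\mu^+(A) - \mathcal{I}_{(\mathbb{S}_1^n,g_1,\mu)}(\mu(A)) = \delta$, I would add and subtract $\mathcal{I}_\gamma(\mu(A))$ to re-express the deficit with respect to the Gaussian profile:
\[
0 \leq \mu^+(A) - \mathcal{I}_\gamma(\mu(A)) \leq \delta + \frac{C}{n} \leq 2C \max(\delta, n^{-1}) = 2C\, \delta_n.
\]
The set $A$ therefore fits the hypothesis of Theorem \ref{thmrisosph} with Gaussian deficit at most a constant multiple of $\delta_n$. Applying that theorem directly produces a spherical cap $H$ with $\mu(A \Delta H) = O(|\log(2C\delta_n)|^{-c})$, and since $|\log(2C\delta_n)| \geq \tfrac{1}{2}|\log \delta_n|$ for $\delta_n$ below some threshold, this is controlled by $O(|\log \delta_n|^{-c})$; for $\delta_n$ bounded away from zero the conclusion is trivial after enlarging the implicit constant, since $\mu(A \Delta H) \leq 1$ for any cap $H$ with $\mu(H) = \mu(A)$.

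There is essentially no true obstacle here, as all the substantive analytic work \textemdash{} the quantitative rigidity of half-space minimizers in the Bobkov inequality via heat-semigroup interpolation \textemdash{} is already encapsulated in Theorem \ref{thmrisosph}. The only point requiring care is the bookkeeping that absorbs the $O(n^{-1})$ term coming from the profile comparison into the definition $\delta_n = \max(\delta, n^{-1})$, which is exactly the role this $\max$ was engineered to play.
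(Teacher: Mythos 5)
Your proposal is correct and follows exactly the paper's intended argument: the paper derives this corollary from Theorem \ref{thmrisosph} in one line, by noting that $\mathcal{I}_{(\mathbb{S}_1^n,g_1,\mu)}$ and $\mathcal{I}_{\gamma}$ differ by $O(n^{-1})$, so a spherical deficit $\delta$ becomes a Gaussian deficit $O(\delta_n)$ with $\delta_n=\max(\delta,n^{-1})$, which is precisely your bookkeeping. The only refinements you add — the explicit bracketing via $t-t^{n/(n-1)}=O(n^{-1})$ and the absorption of the constant inside the logarithm — are details the paper leaves implicit.
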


The proof of this theorem is directly inspired from the work of \cite{M-N1}. We will use the general notation $(E, \mu, \Gamma)$, keeping in mind that most of the arguments are valid for general spaces. Specific bounds to the Euclidean spheres will be explicited in the process of the proof. Throughout the proof, $c, C$ will denote numerical constants (\textit{a priori} explicits) with $c \in (0,1)$ and $C \geq 1$ which may change from a line to another.

\begin{proof} [Proof of Theorem \ref{thmrisosph}]

The starting point is the semigroup proof of \cite{B-L1} of the functional version of the Bobkov inequality. We recall that if $f$ is the characteristic function of a measurable set $A$, then $\delta = \Psi(0) - \lim_{t \to \infty} \Psi(t) = \int_0^{\infty} - \Psi'(s) ds $, where 
$ \Psi \, :  t \in [0, \infty ) \mapsto  \int_{E}  \sqrt{ \mathcal{I}_{\gamma}^2 (P_t f) + \Gamma (P_t f)} d\mu.$
Moreover, by the remark of Section~\ref{sec:geofun}, for all $t >0$,
\[
- \Psi'(t) \geq  \int_{E} \frac{\mathcal{I}_{\gamma}(P_t f) (\Gamma_2 - \Gamma) (\Phi^{-1} \circ P_t f)}{(1 + \Gamma (\Phi^{-1} \circ P_t f))^{3/2}} d\mu,
\]
so that 
\begin{equation} \label{eq.defiso}
\delta \geq \mu^+(A) - \mathcal{I}_{\gamma} (\mu(A)) \geq  \int_0^{\infty} \int_{E} \frac{\mathcal{I}_{\gamma}(P_t \mathbf{1}_A) (\Gamma_2 - \Gamma) (\Phi^{-1} \circ P_t \mathbf{1}_A)}{(1 + \Gamma (\Phi^{-1} \circ P_t \mathbf{1}_A))^{3/2}} d\mu \, dt.
\end{equation}
Recall that, since $(P_t)_{t \geq 0}$ is mass preserving, and since the associated kernel $p_t$ is a positive function, for all $t > 0$, $P_t \mathbf{1}_A$ is a smooth function that takes value in $(0,1)$. Let now a smooth function $f \, : \, E \to (0,1)$ (that will be $P_s \mathbf{1}_A$ for some positive $s$) and $\delta$ to be the deficit in the Bobkov inequality associated to $f$.

In the preceding section, we gave a lower bound of $\int_{E} (\Gamma_2 - \Gamma) (f) d\mu$ in terms of projection over linear functions in the case of Euclidean spheres $\mathbb{S}_1^n$ or function of the type $x \mapsto \langle  a, \nabla V (x)  + b \rangle $ for log-concave measures on $\R^n$. In order to make use of this lower bound, the first task is to remove the quotient in the integrand of \eqref{eq.defiso}. For that, we make use of the reverse isoperimetric inequality given by Proposition~\ref{reverseiso}. Recall that for all function $f \, : \, E \to [0,1]$ and for all $t >0$, 
\[
C(t) \Gamma (\Phi^{-1} \circ P_t f)) \leq 1,
\] 
where  $C(t) = 2\int_0^t e^{2s} ds = e^{2t} -1$. 
Therefore, this implies 
\[
{(1 + \Gamma (\Phi^{-1} \circ P_t f))^{-3/2}} \geq  (1 + 1/C(t))^{-3/2}
\] and since $(1 + 1/C(t))^{-3/2} \sim (2t)^{-3/2} $ as $t$ goes to $0$,
for all $u >0$ and for some numerical constant $c$,
\[
\delta \geq \frac{c}{u^{-3/2}} \int_{u}^{2u} \int_{E} \mathcal{I}_{\gamma}(P_t f) (\Gamma_2 - \Gamma) (\Phi^{-1} \circ P_t f) d\mu \, dt.
\]
As a consequence, by the mean value theorem, there exists a $t \in (u,2u)$ such that
\begin{equation} \label{eqloca}
\delta \geq c u^{5/2} \int_{E} \mathcal{I}_{\gamma}(P_t f) (\Gamma_2 - \Gamma) (\Phi^{-1} \circ P_t f) d\mu \geq 2^{-5/2} c t^{5/2} \int_{E} \mathcal{I}_{\gamma}(P_t f) (\Gamma_2 - \Gamma) (\Phi^{-1} \circ P_t f) d\mu.
\end{equation}
All the remaining work is devoted to take account of the extra term $\mathcal{I}_{\gamma}(P_t f)$ (which is small whenever $P_t f$ is close to $0$ and $1$).

To get rid of this extra term, Mossel and Neeman use the reverse H\"older's inequality. After some work, this implies that $\Phi^{-1} (P_t f)$ is close to a linear function for some $t \in (c, c+1)$ when $c >0$ is universal. In order to conclude, the authors need a ``time reversal'' argument. For that they use spectral estimates that seem to be rather specific to the Gaussian space - even though it seems that it should work on the spheres. Our approach avoids this use by choosing an appropriate small $t$. Then the ``time reversal'' argument follows easily. Indeed, since we are close to achieve equality, $\| \sqrt{\Gamma (f)} \|_1$ is finite and so the reverse Poincar\'e inequality ensures that $f$ and $P_t f$ are close in $L^1$ distance. Besides, $\Phi$ is a Lipschitz map, so that $\Phi^{-1} (P_t f)$ and $\Phi^{-1} (f)$ are as least that close. More precisely, our proof continues as follows. 

Since  $x \mapsto \mathcal{I}_{\gamma}(x)$ is increasing on $(0,1/2)$ and symmetric with respect to $1/2$ on $(0,1)$, for $\varepsilon < 1/2$ we get that
\begin{eqnarray*}
   \int_{E} \mathcal{I}_{\gamma}(P_t f) (\Gamma_2 - \Gamma) (\Phi^{-1} \circ P_t f) d\mu 
& \geq &  \int_{\{\varepsilon \leq  P_t f \leq 1- \varepsilon\}} \mathcal{I}_{\gamma} (P_t f) (\Gamma_2 - \Gamma) (\Phi^{-1} \circ P_t f) d\mu  \\
& \geq & \mathcal{I}_{\gamma} (\varepsilon) \int_{\{ \varepsilon \leq  P_t f \leq 1- \varepsilon\}}  (\Gamma_2 - \Gamma) (\Phi^{-1} \circ P_t f) d\mu.
\end{eqnarray*}
Recalling \eqref{eqloca}, we deduce that
\begin{equation} \label{eqloceps}
\delta \geq c t^{5/2} \mathcal{I}_{\gamma} (\varepsilon) \int_{\{\varepsilon \leq  P_t f \leq 1- \varepsilon \}}  (\Gamma_2 - \Gamma) (\Phi^{-1} \circ P_t f) d\mu. 
\end{equation}
Moreover, denoting $h_t = \Phi^{-1} \circ P_t f$,
\begin{equation} \label{eqcut}
\int_{\{ \varepsilon \leq  P_t f \leq 1- \varepsilon \}}  (\Gamma_2 - \Gamma) (h_t) d\mu   
= \int_E  (\Gamma_2 - \Gamma) (h_t) d\mu   - \int_{\{ P_t f \leq \varepsilon \}}  (\Gamma_2 - \Gamma) (h_t) d\mu -  \int_{\{ P_t f \geq 1- \varepsilon \}}  (\Gamma_2 - \Gamma) (h_t) d\mu.
\end{equation}

The first term is precisely what we are looking for, as we proved a lower bound of it in previous section. We therefore need to show that, for an appropriate choice of $\varepsilon$, we do not lose too much while subtracting the integral on the sets $\{ P_t f \leq \varepsilon\}$ and $\{ P_t f \geq 1- \varepsilon \}$. That is why we made the hypothesis $(\mathcal{H})$. Recall that in the case of Euclidean spheres, $(\mathcal{H})$ is satisfied and it holds, for some $t_0 \in (0,1)$,
\[
\forall \varepsilon \leq 1/7 , \forall t \leq t_0, \,   \int_{\{ P_t f \leq \varepsilon\}}  (\Gamma_2 - \Gamma) (h_t) d\mu \leq \frac{C}{t^4} \int_{\{ P_t f \leq \varepsilon\}}   h_t^2 d\mu,
\]
so that by symmetry
\[
\forall \varepsilon \leq 1/7, \, \forall t \leq t_0, \,  \int_{\{ P_t f \geq 1 - \varepsilon\}}  (\Gamma_2 - \Gamma) (h_t) d\mu \leq \frac{C}{t^4} \int_{\{ P_t f \geq 1-\varepsilon\}}   (\Phi^{-1}(1- P_t f))^2 d\mu. 
\]
Both integrals are bounded in the same manner. We deal with the first one.  Recall that as a consequence of the reverse isoperimetric inequality $h_t$ is $(2t)^{-1/2}$-Lipschitz and so by classical concentration for Lipschitz maps, $\mu ( \{ -h_t \geq u \}) \leq e^{-tu^2}$. Since 
\[
\int_{\{ P_t f \leq \varepsilon\}}   h_t^2 d\mu  = \int_{\{-h_t \geq -\Phi^{-1}(\varepsilon)\}} h_t^2 d\mu,
\]
by the layer-cake representation,
\[
\int_{\{-h_t \geq -\Phi^{-1}(\varepsilon)\}}  h_t^2 d\mu  = 2  \int_{-\Phi^{-1}(\varepsilon)}^{\infty} u \mu ( \{ -h_t \geq u \}) du \leq  2 \int_{-\Phi^{-1}(\varepsilon)}^{\infty} u e^{-tu^2} du = \frac{ e^{-  t (\Phi^{-1}(\varepsilon))^2 }}{t}.
\]
Thus 
\begin{equation} \label{eqconc}
\int_{\{ P_t f \leq \varepsilon\}}  (\Gamma_2 - \Gamma) (h_t) d\mu + \int_{\{ P_t f \geq 1 - \varepsilon\}}  (\Gamma_2 - \Gamma) (h_t) d\mu \leq \frac{C}{t^5} e^{- t (\Phi^{-1}(\varepsilon))^2 }.
\end{equation}
Therefore, using \eqref{eqloceps}, \eqref{eqcut}, and \eqref{eqconc}, we get 
\[
\delta \geq c t^{5/2} \mathcal{I}_{\gamma}(\varepsilon) \bigg( \int_{E}  (\Gamma_2 - \Gamma) (h_t) d\mu  - \frac{1}{t^5} e^{-  t(\Phi^{-1}(\varepsilon))^2 } \bigg),
\]
so that
\[
\int_{E}  (\Gamma_2 - \Gamma) (h_t) d\mu \leq C \bigg( \frac{\delta}{ t^{5/2} \mathcal{I}_{\gamma}(\varepsilon)} + \frac{1}{t^5} e^{-  t (\Phi^{-1}(\varepsilon))^2 } \bigg).
\]

\medskip

Besides, recall that the Poincar\'e inequality of second order for $(E, \mu, \Gamma) = (\mathbb{S}_1^n, \mu)$ implies that
\[
\|h_t - \Pi_1 h_t\|_2^2 \leq  \int_{E}  (\Gamma_2 - \Gamma) (h_t) d\mu,
\]
where $\Pi_1$ is the projection on linear functions. In this case, the two preceding bounds imply then 
\begin{equation} \label{eqPhi}
\|h_t - \Pi_1 h_t\|_2^2  \leq C \bigg( \frac{\delta}{ \mathcal{I}_{\gamma}(\varepsilon) t^{5/2} } + \frac{1}{t^5} e^{-  t (\Phi^{-1}(\varepsilon))^2 } \bigg).
\end{equation}

\vspace{2mm}

Now, we recall that as a consequence of the reverse Poincar\'e inequality, \eqref{reversesg} holds, i.e. 
\[
\| f - P_t f \|_1 \leq \sqrt{2t} \| \sqrt{\Gamma (f)} \|_1 .
\] 
Since we are close to a case of equality, we can make the assumption that 
\[
\| \sqrt{\Gamma(f)} \|_1 \leq \mathcal{I}_{\gamma} (\mathbb{E} f) + \delta \leq \frac{1}{2}
\]
otherwise $\delta$ is larger than $1/2 - (2\pi)^{-1/2}$ and the announced claim is still true. This implies that $\|f - P_t f\|_1 \leq \sqrt{2t}/2$. Using the triangular inequality, we thus get 
\begin{equation} \label{eqtri}
\|f - \Phi(\Pi_1 h_t) \|_{L^1(\mu)} \leq   \frac{\sqrt{2}}{2} \sqrt{t} + \|P_t f - \Phi(\Pi_1 h_t)\|_1.
\end{equation}
Besides, since $\Phi$ is $1$-Lipschitz, for every norm $\| \, \cdot \, \|$, it holds 
\[
\|P_t f - \Phi(\Pi_1 h_t)\| = \|\Phi (h_t)  - \Phi(\Pi_1 h_t)\| \leq  \|h_t - \Pi_1 h_t\|,
\] 
and therefore
\begin{equation} \label{eqcontr}
\|P_t f - \Phi(\Pi_1 h_t)\|_1 \leq  \|P_t f - \Phi(\Pi_1 h_t)\|_2 \leq \|h_t - \Pi_1 h_t \|_2.
\end{equation}
Using \eqref{eqPhi}, \eqref{eqtri} and \eqref{eqcontr} together, we obtain 
\[
\|f - \Phi(\Pi_1 h_t)\|_1 
\leq \frac{\sqrt{2}}{2}\sqrt{t} + \ C \bigg( \frac{\delta}{ t^{5/2}  \mathcal{I}_{\gamma}(\varepsilon)} + \frac{1}{t^5} e^{-  t  (\Phi^{-1}(\varepsilon))^2} \bigg)^{1/2}.
\]

Recall that $\varepsilon \in (0, 1/7)$ and $t \in (0,t_0)$ are arbitrary. It remains to optimize over $t$ and $\varepsilon$. We use the well known asymptotic estimates $\mathcal{I}_{\gamma} (x) \sim x \sqrt{-2 \log x}$ and $\mathcal{I}'_{\gamma} (x) = \Phi^{-1}(x) \sim -\sqrt{-2 \log x}$, when $x$ goes to $0$.

Therefore, we can take $t = |\log (\delta)|^{-2c},$ for some $c \in (0.49,1/2)$, and $\varepsilon = \delta^{1/2}$. The second term between the brackets decays at faster rate in $\delta$ than the first one so that it yields the existence of a positive constant $\delta_0$ such that
\begin{equation} \label{nearsets}
\forall \delta \leq \delta_0, \, \|f - \Phi(\Pi_1 h_t)\|_{L^1(\mu)} \leq  |\log \delta|^{-c}.
\end{equation}

To recover the statement of Theorem \ref{thmrisolg}, we now need to go back to sets. The following arguments are directly taken from \cite{M-N1}.

First, we apply this result for some smooth approximation $f$ of $\mathbf{1}_A$. Using strong continuity of $(P_t)_{t \geq 0}$, we can find a small $s >0$ such that for every $g \in L^1(E),$
\[
\|\mathbf{1}_A - g \|_1 \leq \| P_s \mathbf{1}_A -  g\|_1 + \delta.
\]
Since $P_s \mathbf{1}_A$ is smooth for every $s \geq 0$, we can apply \eqref{nearsets} for $f = P_s \mathbf{1}_A$ so that 
\[
\forall \delta \leq \delta_0, \, \|\mathbf{1}_A -  \Phi(\Pi_1 h_t) \|_1 \leq \|f -  \Phi(\Pi_1 h_t) \|_1 + \delta \leq 2 |\log \delta|^{-c}.
\] 
In order to conclude, we use the following lemma which is a straightforward generalization of Lemma 5.2 from \cite{M-N1} - it consists of rounding $\Phi(\Pi_1 h_t)$ to $\{0,1\}$.

\begin{lemma}
Let $A$ be measurable set and $g = \Phi(\Pi_1 h_t)$. Let $H$ to be the spherical cap $\{\Pi_1 h_t \geq 0 \}$. Then 
\[
\mu(A \Delta H) =  \int_{E} |\mathbf{1}_A - \mathbf{1}_H| d\mu \leq \int_{E} |\mathbf{1}_A - g| d\mu.
\]
\end{lemma}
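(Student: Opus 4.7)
The key identity to exploit is the pointwise ``layer-cake'' representation of $|\mathbf{1}_A - g|$ when $g$ takes values in $[0,1]$. For any $x \in E$, if $\mathbf{1}_A(x) = 1$ then
\[
\int_0^1 \bigl|\mathbf{1}_A(x) - \mathbf{1}_{\{g \geq s\}}(x)\bigr| \, ds = \int_0^1 \mathbf{1}_{\{s > g(x)\}} \, ds = 1 - g(x) = |\mathbf{1}_A(x) - g(x)|,
\]
and if $\mathbf{1}_A(x) = 0$ the same computation yields $g(x) = |\mathbf{1}_A(x) - g(x)|$. So the pointwise identity $|\mathbf{1}_A(x) - g(x)| = \int_0^1 |\mathbf{1}_A(x) - \mathbf{1}_{\{g \geq s\}}(x)| \, ds$ holds for every $x$.

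Integrating against $\mu$ and applying Fubini gives
\[
\int_E |\mathbf{1}_A - g| \, d\mu = \int_0^1 \mu\bigl(A \Delta \{g \geq s\}\bigr) \, ds.
\]
The right-hand side is the average of $\mu(A \Delta \{g \geq s\})$ over $s \in [0,1]$, so by the mean value theorem there exists a threshold $s^* \in [0,1]$ such that $\mu(A \Delta \{g \geq s^*\}) \leq \int_E |\mathbf{1}_A - g| \, d\mu$.

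To finish, I would observe that since $g = \Phi(\Pi_1 h_t)$ and $\Phi$ is strictly increasing, the super-level set $\{g \geq s^*\}$ coincides with $\{\Pi_1 h_t \geq \Phi^{-1}(s^*)\}$, which is a spherical cap (a super-level set of a linear function restricted to $\mathbb{S}_1^n$). Thus the cap $H$ claimed in the lemma should be understood as this optimally thresholded level set, not literally the $\{\Pi_1 h_t \geq 0\}$ set. This is precisely the refinement needed to avoid the factor of $2$: rather than fixing the threshold at $0$ (equivalently, at $g \geq 1/2$), we let it float and choose it to satisfy the bound guaranteed by the averaging above.

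The only subtlety worth noting is that the argument requires $0 \leq g \leq 1$ for the pointwise identity, which holds here since $g = \Phi(\Pi_1 h_t) \in (0,1)$; no further smoothness or structure of $g$ is used, so the same rounding lemma applies in any measure space. There is no real obstacle: the mean value / Fubini step is essentially the entire content, and the explicit form of $H$ follows from the monotonicity of $\Phi$.
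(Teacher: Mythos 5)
Your layer-cake/averaging argument is correct, but note that it proves a variant of the lemma rather than the statement as written: the lemma fixes $H=\{\Pi_1 h_t\geq 0\}$ (equivalently the level set $\{g\geq 1/2\}$), whereas you produce a cap $\{\Pi_1 h_t\geq \Phi^{-1}(s^*)\}$ with a floating threshold. This is in fact a sensible repair, because with the threshold frozen at $1/2$ the inequality cannot hold with constant $1$: take $\Pi_1 h_t$ of very small norm, so that $g$ is uniformly close to $1/2$, and $A$ of measure $1/4$ contained in $\{\Pi_1 h_t<0\}$; then $\mu(A\Delta H)\approx 3/4$ while $\|\mathbf{1}_A-g\|_{L^1(\mu)}\approx 1/2$. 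The paper itself gives no proof here — it cites Lemma 5.2 of Mossel--Neeman, whose content is the pointwise rounding bound: on $A\setminus H$ one has $g<1/2$, on $H\setminus A$ one has $g\geq 1/2$, so $|\mathbf{1}_A-g|\geq 1/2$ on $A\Delta H$ and hence $\mu(A\Delta H)\leq 2\|\mathbf{1}_A-g\|_{L^1(\mu)}$. So the two routes differ: the cited rounding keeps the explicit cap $\{\Pi_1 h_t\geq 0\}$ at the cost of a factor $2$, while your Fubini/mean-value argument gets constant $1$ at the cost of an unspecified threshold. Either version is entirely sufficient for Theorem \ref{thmdefsph}, which only asserts the existence of some spherical cap and absorbs constants into $O(|\log\delta|^{-c})$; just add the small caveats that the set of admissible thresholds $s$ has positive Lebesgue measure (so $s^*$ exists), and that if $\Pi_1 h_t$ is constant or $s^*$ is extreme the level set is a degenerate cap ($\emptyset$ or all of $E$), which is harmless for the conclusion.
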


\noindent Obviously, if $\delta \geq \delta_0$, $\mu(A \Delta H) \leq |\log \delta_0|^c |\log \delta|^{-c}$ so that for every $\delta \geq 0$, 
\[
\mu(A \Delta H) \leq O(|\log \delta|^{-c}).
\] 
Theorem \ref{thmrisosph} is thus established.  
\end{proof}

\vspace{2mm}

\noindent \textit{Remark} : Concerning the log-concave case, we have seen that the second order Poincar\'e's inequality reads as 
\[
 \int_{\R^n} (\Gamma_2 - \Gamma) (f) d\mu \geq C \|f - \nabla V \cdot v_0 + t_0 \|_2^2,
\] 
with  $t_0 = - \int_{\R^n} f d\mu = - \mathbb{E} f$ and $v_0 = \int_{\R^n} x (f(x) - \mathbb{E} f ) d\mu(x)$. 
If Hypothesis \eqref{hypo} holds (with $C$ independent on $n$), it would imply by a completely similar proof the following Theorem.   
\begin{theorem} \label{thmrisolg}
Let $(\R^n,d\mu(x) = e^{-V(x)}dx)$ where $\mu$ is such that $1 \leq \nabla^2 V \leq K$. Let $A$ be a set such that 
\[
\mu^+(A) \leq \mathcal{I}_{\gamma} (\mu(A)) + \delta.
\]
Then there exists a set $B$ with of the form $B = \{x \in \R^n,  \nabla V(x) \cdot v_{0} + x_{0} \geq 0 \},$  with some $v_0 \in \R^n$ and $x_0 \in \R$, and a positive constant $c$ such that  
\[
\mu(A \Delta B) \leq O( |\log \delta|^{-c}).
\]
\end{theorem}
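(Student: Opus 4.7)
The plan is to follow the proof of Theorem~\ref{thmrisosph} essentially line by line, making only the substitutions that are forced by the log-concave setting. Starting from the semigroup derivative formula
\[
\delta \geq \int_0^{\infty} \int_{\R^n} \frac{\mathcal{I}_{\gamma}(Q_t \mathbf{1}_A)\,(\Gamma_2 - \Gamma)(\Phi^{-1} \circ Q_t \mathbf{1}_A)}{(1 + \Gamma(\Phi^{-1} \circ Q_t \mathbf{1}_A))^{3/2}}\, d\mu\, dt,
\]
where $Q_t$ denotes the semigroup generated by $L = \Delta - \nabla V \cdot \nabla$, I would first apply the reverse isoperimetric inequality of Proposition~\ref{reverseiso} to bound the denominator by a factor of the form $(1 + 1/C(t))^{3/2}$ and then invoke the mean value theorem on an interval $(u, 2u)$ to pick a convenient time $t$. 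Localizing to the set $\{\varepsilon \leq Q_t f \leq 1 - \varepsilon\}$ using the lower bound $\mathcal{I}_\gamma \geq \mathcal{I}_\gamma(\varepsilon)$ gives the analogue of~\eqref{eqloceps}, and then assuming Hypothesis~\ref{hypo} holds with a constant $C$ independent of $n$, the tails $\{Q_t f \leq \varepsilon\}$ and $\{Q_t f \geq 1-\varepsilon\}$ are controlled via the Lipschitz concentration estimate $\mu(\{-h_t \geq u\}) \leq e^{-tu^2}$ exactly as in the spherical case.

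The main substitution comes at the next step: instead of the spherical second-order Poincar\'e inequality $\|h_t - \Pi_1 h_t\|_2^2 \leq \int (\Gamma_2 - \Gamma)(h_t)\, d\mu$, I would invoke the generalized version derived in the previous section from Cordero-Erausquin's Brascamp--Lieb-type variance inequality, namely
\[
\int_{\R^n} (\Gamma_2 - \Gamma)(h_t)\, d\mu \geq C(K) \, \|h_t - \nabla V \cdot v_0(h_t) + t_0(h_t)\|_{L^2(\mu)}^2,
\]
where $t_0 = -\mathbb{E}_\mu h_t$ and $v_0 = \int x(h_t - \mathbb{E}_\mu h_t)\, d\mu$. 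Combining with the previous bound yields an estimate of the form
\[
\|h_t - (\nabla V \cdot v_0 + t_0)\|_2^2 \leq C\bigl( \tfrac{\delta}{t^{5/2} \mathcal{I}_\gamma(\varepsilon)} + \tfrac{1}{t^5} e^{-t(\Phi^{-1}(\varepsilon))^2}\bigr).
\]

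The time-reversal step is identical to the spherical case: since we are near equality we may assume $\|\sqrt{\Gamma f}\|_1 \leq 1/2$, so the reverse Poincar\'e inequality~\eqref{reversesg} gives $\|f - Q_t f\|_1 \leq \sqrt{t}/\sqrt{2}$, and the $1$-Lipschitz property of $\Phi$ transfers the $L^2$ bound on $h_t$ to an $L^1$ bound on $Q_t f - \Phi(\nabla V \cdot v_0 + t_0)$. Optimizing with $t = |\log \delta|^{-2c}$ and $\varepsilon = \sqrt{\delta}$ yields the estimate $\|f - \Phi(\nabla V \cdot v_0 + t_0)\|_1 \leq |\log \delta|^{-c}$. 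Finally, rounding $\Phi(\nabla V \cdot v_0 + t_0)$ to the indicator of its $1/2$-superlevel set via the analogue of Mossel--Neeman's Lemma~5.2 produces the set $B = \{\nabla V \cdot v_0 + x_0 \geq 0\}$ with $\mu(A \Delta B) \leq O(|\log \delta|^{-c})$, after a smooth approximation of $\mathbf{1}_A$ via $Q_s \mathbf{1}_A$ to legitimize the use of the inequality on a smooth function.

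The hard part, as the remark preceding the statement makes explicit, is not the proof scheme itself but rather the verification of Hypothesis~\ref{hypo} with a constant $C$ that does not blow up with the dimension $n$. On the Gaussian space one has either an explicit Mehler kernel or an exact commutation $\nabla Q_t = e^{-t} Q_t \nabla$, neither of which survives in the general strictly log-concave setting. Extending the short-time heat-kernel derivative estimates used for $\mathbb{S}_1^n$ (from \cite{M-S},~\cite{Eng}) to a non-compact log-concave ambient space while retaining a dimension-free constant is the real technical bottleneck; once this is granted, the remainder of the argument is a direct transcription of the spherical proof with $\Pi_1 h_t$ replaced throughout by $\nabla V \cdot v_0 + t_0$.
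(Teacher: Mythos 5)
Your proposal matches the paper's own treatment essentially step for step: the paper states Theorem~\ref{thmrisolg} only conditionally on Hypothesis~\ref{hypo} holding with a dimension-free constant and then asserts it follows "by a completely similar proof" to the spherical case, with the sole substitution being the Cordero-Erausquin second-order Poincar\'e inequality in place of the spherical one — exactly the substitution you make. You also correctly identify the verification of Hypothesis~\ref{hypo} with an $n$-independent constant as the genuine open bottleneck, which the paper likewise leaves unproved (and even cautions, via the Mehler kernel, that the constant scales like $n$ in the heat-kernel-estimate approach).
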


\section{Appendix : proof of Hypothesis \ref{hypo} in the case of the Euclidean spheres.}

In this appendix, we prove the following technical lemma, in the case of Euclidean spheres $\mathbb{S}_1^n$. 
\begin{lemma}  \label{lemmatough}
There exists some positive constants $t_0 \in (0,1)$, $C$ such that, for all $0 < t \leq t_0$ and all $\varepsilon \in (0,1/7)$, 
\[
\int_{\{ P_t f \leq \varepsilon \} } (\Gamma_2 - \Gamma) (\Phi^{-1} (P_t f)) \, d\mu \leq  \frac{C}{t^4}  \int_{\{ P_t f \leq \varepsilon\}}   (\Phi^{-1} (P_t f))^2 d\mu.
\]
\end{lemma}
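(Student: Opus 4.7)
The plan is to establish the stronger pointwise bound
\[
(\Gamma_2 - \Gamma)(\Phi^{-1}(P_t f))(x) \;\leq\; \frac{C}{t^4}\,\bigl(\Phi^{-1}(P_t f)(x)\bigr)^2
\]
for every $x \in \{P_t f \leq \varepsilon\}$ with $\varepsilon \leq 1/7$ and $t \in (0, t_0)$; integration on that set yields the lemma. Since $\mathbb{S}_1^n$ is Einstein with $\mathrm{Ric}_g = g$, Bochner's identity collapses $\Gamma_2(h) - \Gamma(h)$ to $\|\nabla^2 h\|_{HS}^2$ for any smooth $h$. Writing $g = P_t f$ and $h = \Phi^{-1}(g)$, and using $(\Phi^{-1})'(y) = 1/\varphi(\Phi^{-1}(y))$ together with $\varphi'(x) = -x\varphi(x)$, the chain rule gives
\[
\nabla^2 h \;=\; \frac{1}{\varphi(h)}\nabla^2 g \,+\, \frac{h}{\varphi(h)^2}\,\nabla g \otimes \nabla g,
\qquad
\|\nabla^2 h\|_{HS}^2 \,\leq\, \frac{2\|\nabla^2 g\|_{HS}^2}{\varphi(h)^2} \,+\, \frac{2 h^2 |\nabla g|^4}{\varphi(h)^4}.
\]

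The core of the argument is then to produce, for $0 \leq f \leq 1$ and $t \in (0, t_0)$, quantitative pointwise short-time bounds on $|\nabla g|(x)$ and $\|\nabla^2 g\|(x)$. Here I would invoke the Gaussian-type pointwise estimates of Malliavin--Stroock \cite{M-S} and Engoulatov \cite{Eng} on the spatial derivatives of the heat kernel $p_t(x,y)$ on the compact manifold $\mathbb{S}_1^n$: these read roughly $|\nabla_x p_t(x,y)| \leq C t^{-1/2} \tilde p_t(x,y)$ and $\|\nabla_x^2 p_t(x,y)\| \leq C t^{-1} \tilde p_t(x,y)$, where $\tilde p_t$ is a Gaussian-type kernel comparable to $p_t$ up to adjustment of exponential constants. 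Integrating against $f$ and combining with a Harnack-type comparison produces pointwise inequalities of the form $|\nabla g|(x) \leq (C/\sqrt{t})\, g(x)^{\alpha_1}$ and $\|\nabla^2 g\|(x) \leq (C/t)\, g(x)^{\alpha_2}$, for some $\alpha_1, \alpha_2 \in (0,1)$ depending only on the sphere's geometry.

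To finish, on $\{g \leq \varepsilon\}$ with $\varepsilon \leq 1/7$ the standard Gaussian tail asymptotics $\Phi(h) \sim \varphi(h)/|h|$ as $h \to -\infty$, combined with $|h| \geq |\Phi^{-1}(1/7)| > 1$, give a uniform lower bound $\varphi(h) \geq c\,|h|\,g$ with a universal $c > 0$. Substituting the pointwise derivative bounds together with this asymptotic relation into the chain-rule expansion, the two terms are each controlled by $C h^2/t^4$ after book-keeping, which yields the pointwise bound and hence the lemma.

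The main obstacle, and the source of the degraded exponent $4$ (as opposed to $2$ in the Gaussian case of \cite{M-N1}), lies in the heat-kernel derivative bounds. On $(\R^n, \gamma_n)$ the exact intertwining $\nabla Q_t = e^{-t} Q_t \nabla$ transports the smallness of $Q_t f$ optimally to $\nabla Q_t f$ and $\nabla^2 Q_t f$, and the second-order argument of Neeman \cite{Nee} makes the loss quantitative. On the sphere this commutation fails, so one must instead rely on the short-time kernel derivative estimates above, which by themselves lose additional powers of $t^{-1}$ when pushed through the chain rule. A more refined analysis (perhaps via $\Gamma_3$-type computations) should sharpen the exponent, but $\eta = 4$ already suffices for Theorem \ref{thmdefsph}.
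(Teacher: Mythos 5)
Your overall architecture (reduce to a pointwise bound on $\{P_tf\le\varepsilon\}$, use Bochner to write $\Gamma_2-\Gamma$ as $\|\nabla^2 h\|_{HS}^2$, expand by the chain rule, and control the two resulting terms by short-time heat-kernel estimates) matches the paper's. But the quantitative input you propose is too weak to close the argument, and this is a genuine gap, not a book-keeping issue. Your derivative bounds $|\nabla g|\le (C/\sqrt t)\,g^{\alpha_1}$ and $\|\nabla^2 g\|\le (C/t)\,g^{\alpha_2}$ with $\alpha_1,\alpha_2\in(0,1)$ lose a power of $g$, and on the set $\{g\le\varepsilon\}$ the function $g$ can be arbitrarily small. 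Feeding them into your chain-rule expansion and using $\varphi(h)\ge c|h|g$, the Hessian term becomes of order $t^{-2}h^{-2}g^{-2(1-\alpha_2)}$ and the gradient term of order $t^{-2}h^{-2}g^{-4(1-\alpha_1)}$; since $g\sim\varphi(h)/|h|$ decays like $e^{-h^2/2}$, the factors $g^{-2(1-\alpha_2)}$ and $g^{-4(1-\alpha_1)}$ blow up much faster than the available $h^2$ on the right-hand side, so the claimed bound $Ch^2/t^4$ fails wherever $P_tf$ is very small. One needs the exponents $\alpha_i=1$ exactly (with at most polynomial corrections in $d(x,y)$ or $h$), and that is precisely what a comparison between $\int f\,\tilde p_t$ and $\int f\,p_t$ for a Gaussian majorant $\tilde p_t$ with a worsened exponential constant cannot deliver.

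The paper gets the full power in two different ways for the two terms. For the gradient term it does not use kernel estimates at all: the reverse isoperimetric inequality (Proposition~\ref{reverseiso}) gives $\Gamma(\Phi^{-1}(P_tf))\le 1/C(t)\sim 1/(2t)$ pointwise, whence $(\mathcal{I}_\gamma'(f_t))^2\Gamma(h_t)^2\le h_t^2/(4t^2)$ directly. For the Hessian term it uses the Malliavin--Stroock/Engoulatov bounds on $\nabla\log p_t$ and $\nabla^2\log p_t$ --- i.e.\ derivative bounds \emph{relative to $p_t$ itself}, of the form $\|\nabla^2 p_t\|/p_t\le C(1+d^2(x,y))^{1/2}t^{-2}$ after recombination --- and then Jensen's inequality with respect to the tilted probability measure $f(y)p_t(x,y)\,d\mu(y)/P_tf(x)$, which yields $\|\nabla^2 f_t\|_{HS}^2\le Cf_t^2/t^4$ with no fractional loss (the moment $\int d^2(1+d^2)\,d\tilde\mu$ being $\le 1$ for small $t$). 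You should replace your Harnack-comparison step by this log-derivative-plus-Jensen argument. A secondary omission: since Theorem~\ref{thmdefsph} is meant to be dimension-free, you must also verify that the constant in the kernel estimates does not depend on $n$; the paper does this by showing $\mu(B(\cdot,\sqrt t))$ on $\mathbb{S}_1^n$ is comparable to $\gamma_1([0,\sqrt t])$ up to $O(\sqrt t\,n^{-1})$, which removes the usual $t^{n/2}$ volume factor from the Gaussian two-sided kernel bounds.
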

We shall show that the upper bound holds point-wise on the set $\{ P_t f \leq \varepsilon \}$ for sufficiently small $t$ and $\varepsilon$. We will write as a short-hand $f_t$ for $P_t f$ and $h_t$ for $\Phi^{-1} (P_t f)$ and stick to abstract notations $(E, \mu, \Gamma)$ since most of the arguments are valids in general.

\begin{proof} [Proof of Lemma \ref{lemmatough}]

Recall that for Euclidean Spheres, 
\begin{equation} \label{eq.gamma2}
(\Gamma_2 - \Gamma)(h_t) = \| \nabla^2 h_t\|_{HS}^2.
\end{equation}
First, we expand the hessian thanks to the chain rule formula.
\begin{eqnarray*}
\| \nabla^2 h_t \|_{HS}^2 & = & \bigg\| \frac{\nabla^2 f_t}{\mathcal{I}_{\gamma}(f_t)}  +  \frac{\mathcal{I}_{\gamma}'(f_t) \nabla f_t \, ^T \nabla f_t }{\mathcal{I}_{\gamma}(f_t)^2} \bigg\|_{HS}^2 \\
& = & \bigg\| \frac{\nabla^2 f_t}{\mathcal{I}_{\gamma}(f_t)}  +  \mathcal{I}_{\gamma}'(f_t) \nabla h_t \, ^T \nabla h_t  \bigg\|_{HS}^2.
\end{eqnarray*} 
Using the inequality $(a+b)^2 \leq 2(a^2+b^2)$, we get 
\begin{equation} \label{eq.hess}
\| \nabla^2 h_t \|_{HS}^2 \leq 2 \frac{\|\nabla^2 f_t\|_{HS}^2} {\mathcal{I}_{\gamma}^2(f_t)} + 2 (\mathcal{I}_{\gamma}'(f_t))^2  \| \nabla h_t \, ^T \nabla h_t \|_{HS}^2.
 \end{equation}
We will bound the two terms separately.

\vspace{2mm}

\noindent Recall that the reverse isoperimetric inequality reads as $C(t) \Gamma(P_t f)   \leq  \mathcal{I}_{\gamma}^2 (P_t f),$ or equivalently by the chain rule formula,
\begin{equation} \label{eq.reverseiso}
C(t) \Gamma(h_t) \leq 1.
\end{equation}
Since $t$ is small, we can simply take $C(t) = 2t$ as $C(t) \sim 2t$ when $t$ goes to 0. Thus, \eqref{eq.reverseiso} implies that
\begin{equation} \label{order1}
(\mathcal{I}_{\gamma}'(f_t))^2  \| \nabla h_t \, ^T \nabla h_t \|_{HS}^2 = (\mathcal{I}_{\gamma}'(f_t))^2 (\Gamma (h_t))^2 \leq \frac{(\mathcal{I}_{\gamma}'(f_t))^2}{4t^2} =  \frac{h_t^2}{4t^2}.
\end{equation}

Recalling \eqref{eq.gamma2} and \eqref{eq.hess}, by \eqref{order1}, in view of Lemma \ref{lemmatough}, we will prove 
\[
\exists t_0 \in (0,1), \, \, \forall t \in (0, t_0), \, \frac{\|\nabla^2 f_t\|_{HS}^2} {\mathcal{I}_{\gamma}^2(f_t)} \leq \frac{C h_t^2}{t^4}
\]
on the set $\{f_t \leq 1/7 \}$.
\medskip

We therefore proceed to show that actually a stronger estimates holds. It is known indeed that the following estimates on the derivatives of the heat kernel $p_t(x,y)$ associated to the heat semi-group hold - these estimates are valid for compact Riemannian manifolds with Ricci curvature bounded from below (see e.g \cite{M-S}, \cite{Eng}). 
\begin{lemma} \label{lemhkesti}
There exists a positive constant $C$ such that 
\begin{equation} 
\forall t \leq 1, \, | \nabla \log p_t (x,y) |^2 \leq \frac{C (1 + d^2(x,y))}{t^2}, \, \| \nabla^2 \log p_t (x,y) \|^2_{HS} \leq \frac{C(1 + d^2(x,y))}{t^4}.
\end{equation}
\end{lemma}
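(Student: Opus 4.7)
My plan is to reduce both bounds to two ingredients: (a) Gaussian two-sided estimates on the heat kernel $p_t$, and (b) a gradient/Hessian regularity bootstrap that exploits the parabolic scaling $z \mapsto \sqrt{t}\,z$.

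First, since $\mathbb{S}_1^n$ is compact with Ricci curvature bounded below, the Li--Yau Harnack inequality combined with standard on-diagonal upper bounds yields two-sided Gaussian estimates
\[
c_1 t^{-n/2} e^{-c_2 d^2(x,y)/t} \leq p_t(x,y) \leq c_3 t^{-n/2} e^{-c_4 d^2(x,y)/t}, \qquad t \in (0,1].
\]
Taking logarithms and setting $A_t := \sup_z p_t(x,z) \asymp t^{-n/2}$, I obtain the key control
\[
\log\bigl(A_t / p_t(x,y)\bigr) \leq C\bigl(1 + d^2(x,y)/t\bigr).
\]

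Second, for the gradient bound I invoke Hamilton's differential Harnack inequality, valid for positive solutions of the heat equation on compact manifolds with Ricci curvature bounded from below: for $t \in (0,1]$,
\[
t \,|\nabla_y \log p_t(x,y)|^2 \leq C\,\log\bigl(A_t/p_t(x,y)\bigr).
\]
Substituting the logarithmic bound from the previous step immediately yields $|\nabla \log p_t(x,y)|^2 \leq C(1+d^2(x,y))/t^2$, which is the first claimed estimate.

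Third, for the Hessian bound I would bootstrap as follows. Apply parabolic Schauder regularity at scale $\sqrt{t}$ to $u(s,z) := \log p_s(x,z)$ on the cylinder $(t/2,t] \times B(y,\sqrt{t})$: a space derivative costs a factor $t^{-1/2}$, so
\[
\|\nabla^2 \log p_t(x,y)\|_{HS}^2 \leq \frac{C}{t^2}\,\sup_{(s,z)} |\nabla \log p_s(x,z)|^2.
\]
Using the first estimate on the cylinder and $d(x,z) \leq d(x,y) + \sqrt{t}$ then yields $\|\nabla^2 \log p_t(x,y)\|_{HS}^2 \leq C(1+d^2(x,y))/t^4$. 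An equivalent probabilistic route is Bismut's Hessian formula, which represents $\nabla^2 \log p_t$ as an expectation involving two Malliavin weights of size $t^{-1}$; the Gaussian upper bound on $p_t$ supplies the exponential factor that, after Cauchy--Schwarz, delivers the polynomial $(1+d^2)$ dependence.

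The main obstacle is the Hessian step: tracking simultaneously the sharp time power $t^{-4}$ and the polynomial-in-$d^2(x,y)$ dependence requires care. In the Schauder approach one must use the Gaussian upper bound as a barrier to keep the constants uniform in the base point $y$; in the Bismut approach one must control both Malliavin weights in $L^2$ against the factor $p_t^2$ in the denominator of $\log p_t$. The first ingredient (Li--Yau bounds) and the Hamilton gradient estimate are essentially off-the-shelf for compact manifolds of bounded geometry, so the real work concentrates on this second-order step.
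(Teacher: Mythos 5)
Your gradient step (Hamilton's estimate $t\,|\nabla\log p_t|^2\leq C\log(A_t/p_t)$ combined with two-sided Gaussian bounds) is essentially Engoulatov's route and is fine as far as it goes, but you should know that the paper does not reprove these estimates at all: it quotes them from \cite{M-S} and \cite{Eng}, and the entire content of its discussion is the point your plan misses, namely that the constant $C$ can be taken \emph{independent of the dimension} on $\mathbb{S}_1^n$ of radius $\sqrt{n-1}$. In your reduction, the constants $c_1,c_3$ in the Li--Yau bounds and the $t^{-n/2}$ on-diagonal normalization enter $\log\bigl(A_t/p_t(x,y)\bigr)$ through an additive term $\log(c_3/c_1)$ which, for generic compact manifolds, grows with $n$; so your ``key control'' $\log(A_t/p_t)\leq C(1+d^2/t)$ with a universal $C$ is exactly where the dimension dependence hides. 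The paper removes it by observing that, for the uniform probability measure on this particular rescaled sphere, the ball volumes $\mu(B(\cdot,\sqrt t))$ appearing in the two-sided heat-kernel bounds \eqref{hklup} are comparable to $\gamma_1([0,\sqrt t])$ uniformly in $n$ (Poincar\'e's lemma), so the kernel bounds, and hence $C$, are dimension free. Without this, the lemma is useless for Theorem \ref{thmdefsph}, whose whole point is a dimension-free deficit bound (see also the footnote explaining why the same removal fails in the log-concave case).

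The Hessian step is a genuine gap even for fixed $n$. The function $u=\log p_t(x,\cdot)$ is not caloric; it solves $\partial_t u=\Delta u+|\nabla u|^2$, so interior parabolic Schauder estimates on a cylinder of scale $\sqrt t$ do not yield $\|\nabla^2 u\|\lesssim t^{-1/2}\sup|\nabla u|$ as you assert -- one must handle the quadratic gradient nonlinearity, and Schauder would in any case require H\"older control of it, not just sup bounds. Your bookkeeping is also internally inconsistent: a derivative ``costing'' $t^{-1/2}$ would give a prefactor $t^{-1}$ after squaring, not the $t^{-2}$ you wrote. The second-order bound is precisely the hard part of \cite{M-S} (and of Stroock--Turetsky), proved by Malliavin-calculus/Bismut-type representations with quantitative weight estimates; invoking Bismut's formula ``with weights of size $t^{-1}$'' without those estimates does not close the argument, and the weights again carry dimension-dependent constants unless the sphere-specific, dimension-free kernel bounds above are fed in. So as written the proposal neither establishes the Hessian estimate nor the uniformity in $n$ that the paper actually needs; the honest version of your plan is to cite \cite{M-S}, \cite{Eng} for the estimates and then do what the paper does, i.e.\ track where the dimension enters their proofs and replace the volume factors by the Gaussian ones.
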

Such bounds in $(\R^n, \mu)$ satisfying $CD(\kappa, \infty)$ are not proven yet in general (see \cite{Li} for related work). Actually, as stated in \cite{M-S}, \cite{Eng}, $C$ depends on $n$. However, looking closely at the proofs, the dependance on $n$ for general compact Riemannian manifolds, comes from the lower and upper bounds on the heat kernel of the form 
\begin{equation} \label{hklup}
 \frac{c_1}{\sqrt{\mu(B(x, \sqrt{t})) \mu (B(y, \sqrt{t}))}} e^{- C_1 \frac{d(x,y)^2}{t}} \leq p_t(x,y) \leq \frac{C_2}{\sqrt{\mu(B(x, \sqrt{t})) \mu (B(y, \sqrt{t}))}} e^{- c_2 \frac{d(x,y)^2}{t}},
\end{equation}
where $B( \cdot , r)$ designs the geodesic ball centered in $\cdot$ of radius $r$. In general, if the curvature is simply bounded from below, $\mu(B( \cdot , \sqrt{t}))$ is estimated as approximately $t^{n/2}$ as in the Euclidean space. Yet, in the cases of $\mathbb{S}_1^n$, the uniform distribution $\mu$ is approximately the Gaussian measure $\gamma_1$ (by the Poincar\'e lemma). More precisely, by rotational invariance of the measure $\mu$ and the Pythagorean theorem,
\begin{eqnarray*}
\forall \theta \in \mathbb{S}_1^n, \, \mu(B( \theta , \sqrt{t})) &= & \frac{\mathrm{vol}_{n-1} \mathbb{S}^{n-1}}{\mathrm{vol}_n \mathbb{S}^n (n-1)^{n/2}}\int_{0}^{\sqrt{t}} (n-1 - u^2)^{\frac{n-1}{2}} \frac{du}{\sqrt{1-\frac{u^2}{n-1}}} \\
& = & \frac{1}{\sqrt{2\pi}} \frac{c_n}{\sqrt{n-1}} \int_0^{\sqrt{t}} \bigg ( 1 - \frac{u^2}{n-1} \bigg)^{\frac{n-2}{2}} \, du.
\end{eqnarray*}
It is not hard to check that, uniformly on $u >0$, 
\[
\frac{1}{\sqrt{2\pi}} \bigg| \frac{c_n}{\sqrt{n-1}}  \bigg( 1 - \frac{u^2}{n-1} \bigg)^{\frac{n-2}{2}}  - e^{-u^2/2} \bigg| \leq O (n^{-1})
\]
so that by integration $|\mu(B( \cdot , \sqrt{t})) - \gamma_1 ([0, \sqrt{t}])| = O (\sqrt{t} n^{-1})$. Thus the fractions in the estimates of \eqref{hklup} can be lower or upper bounded by dimension free ones. As a result (see \cite{Eng} for the details of how to reach from \eqref{hklup} the inequalities of Lemma \ref{lemhkesti}), the dependance on the dimension in $C$ can be remove. \footnote{Notice that in the log-concave case, even if an analogous statement of Lemma  \ref{lemhkesti} would be established, it would not be possible to remove the dependance on the dimension. Indeed, in the simplest case of the Ornstein--Uhlenbeck kernel, it is immediately checked by the Mehler's representation formula that these constants are of the form $C n$, where $n$ is the dimension.}

Expanding the derivatives, Lemma \ref{lemhkesti} can be restated as  
\[
\sum_{1 \leq i,j \leq n} \bigg( \frac{\partial_{x_i, x_j} p_t(x,y)}{p_t(x,y)} - \frac{\partial_{x_i} p_t(x,y) \partial_{x_j} p_t(x,y)}{p_t(x,y)^2} \bigg)^2 \leq \frac{C(1+d^2(x,y))}{t^4}
\]
and 
\[
 \sum_{1 \leq i,j \leq n} \bigg(\frac{\partial_{x_i} p_t(x,y) \partial_{x_j} p_t(x,y)}{p_t(x,y)^2} \bigg)^2 = 
\bigg( \sum_{i=1}^n \frac{(\partial_{x_i} p_t(x,y))^2}{p_t(x,y)^2} \bigg)^2 \leq \bigg( \frac{C(1+d^2(x,y))}{t^2} \bigg)^2.
\]
These two bounds imply, using the inequality $A^2 \leq 2[(A-B)^2 + B^2]$, 
\begin{equation} \label{hkest}
\sum_{1 \leq i,j \leq n}  \bigg( \frac{\partial_{x_i, x_j} p_t(x,y)}{p_t (x,y)} \bigg)^2  \leq \frac{C(1+d^2(x,y) + d^4(x,y))}{t^4}.
\end{equation}

Denoting $ d\tilde{\mu} (y) = \frac{f(y) p_t(x,y) d\mu(y)}{\int_E f(y) p_t(x,y) d\mu} = \frac{f(y) p_t(x,y) d\mu(y)}{f_t (x)},$ by Jensen's inequality, we have that 
\begin{eqnarray*}
\frac{1}{f_t^2 (x)} \sum_{1 \leq i,j \leq n} \bigg( \int_E f(y) \partial_{x_i, x_j} p_t (x,y) d\mu \bigg)^2 &= & \sum_{1 \leq i,j \leq n}  \bigg( \int_{E} \frac{\partial_{x_i, x_j} p_t (x,y)}{p_t(x,y)} d\tilde{\mu} \bigg)^2 \\
& \leq & \sum_{1 \leq i,j \leq n} \int_{E} \bigg( \frac{\partial_{x_i, x_j} p_t (x,y)}{p_t(x,y)}  \bigg) ^2 d\tilde{\mu}.
\end{eqnarray*}

\noindent Besides, \eqref{hkest} implies that
\[
\sum_{1 \leq i,j \leq n} \int_{E} \bigg( \frac{\partial_{x_i, x_j} p_t (x,y)}{p_t(x,y)}  \bigg) ^2 d\tilde{\mu} \leq \frac{C}{t^4} \bigg( 1  + \int_E  d(x,y)^2 (1 + d(x,y)^2) d\tilde{\mu} \bigg).
\]
Moreover,
\[
\int_E  d(x,y)^2 (1+d(x,y)^2) d\tilde{\mu} = \frac{1}{f_t} \int_{E} f(y) d(x,y)^2 (1+d(x,y)^2) p_t(x,y) d\mu(y), 
\]
and since when $t$ goes to $0$, $p_t(x,y)$ is approaching the Dirac mass $\delta_x (y)$, this integral goes to $0$ as $t$ goes to $0$. Therefore, there exists $T_1 >0$ such that for all $t \leq T_1$, $ \int_E  d(x,y)^2 (1+d(x,y)^2) d\tilde{\mu} \leq 1$,
and therefore for all $t \leq t_1 = \mathrm{min}(1,T_1)$ and all $x \in \mathbb{S}_1^n$,
\[
\frac{1}{f_t^2 (x)} \sum_{1 \leq i,j \leq n} \bigg( \int_E f(y) \partial_{x_i, x_j} p_t (x,y) d\mu \bigg)^2 \leq \frac{C}{t^4},
\]
which can be rewritten as $ \|\nabla^2 f_t \|_{HS}^2 \leq \frac{Cf_t^2}{t^4}.$ Then the point-wise estimates holds 
\[
\forall t \in (0,t_1), \, \frac{\|\nabla^2 f_t\|_{HS}^2} {\mathcal{I}^2_{\gamma}(f_t)} \leq \frac{C f_t^2}{t^4 \mathcal{I}_{\gamma}^2 (f_t)}.
\]
Notice that, on the set $\{ f_t \leq \varepsilon \}$, $\frac{f_t^2}{\mathcal{I}_{\gamma}^2 (f_t)}$ goes to $0$ as $\varepsilon$ goes to $0$ so that it actually holds a (much) better estimate than the one needed for Lemma \ref{lemmatough}. Anyway, since $h_t \geq 1$ as soon as $P_t f \leq \Phi(-1)$ and since $\Phi(-1) \geq 1/7$, it proves that for all $\varepsilon \leq 1/7$,
\[
\bigg\| \frac{\nabla^2 f_t}{\mathcal{I}_{\gamma}(f_t)} \bigg  \|_{HS}^2 \leq \frac{C h_t^2}{t^4}.   
\]
Thus, recalling \eqref{eq.hess} and \eqref{order1}, the following point-wise upper bound
\[
\forall t \in (0,t_0), \, \forall \varepsilon \in (0, 1/7), \quad (\Gamma_2 - \Gamma) (h_t) \leq \frac{C h_t^2}{t^4} 
\]
holds and after integration it yields Lemma \ref{lemmatough} for $(\mathbb{S}_1^n, \mu)$.

\end{proof}

\vskip 5 mm 

\textit{Acknowledgment. The main part of this work has been completed when I made my Ph.D at the University of Toulouse. I thank my Ph.D advisor Michel Ledoux for many valuable exchanges.}

\vskip 10 mm

\noindent
\textsc{Rapha{\"e}l Bouyrie,} \\
\textsc{\small{Laboratoire d'Analyse de Math\'ematiques Appliqu\'es, UMR 8050 du CNRS, Universit\'e Paris-Est Marne-la-Vall\'ee, 5 Bd Descartes, Champs-sur-Marne, 77454 Marne-la-Vall\'ee Cedex, France}} \\
\textit{E-mail address:} \texttt{raphael.bouyrie@gmail.com}


\begin{thebibliography} {cc}


\bibitem [Bak] {Ba} {\sc  D. Bakry} \emph{On Sobolev and Sobolev logarithmic inequalities for Markov semigroups}, \rm{New trends in stochastic analysis, 43–75, River Edge, NJ} (1997)


\bibitem [B-E] {B-E} {\sc  D. Bakry, M. Emery} \emph{Diffusions hypercontractives}, \rm{Séminaire de Probabilités XIX, Lecture Notes in Math. Springer-Verlag, New York, 1123, 177-206} (1985)

\bibitem [B-G-L] {B-G-L}
{\sc D. Bakry, I. Gentil, M. Ledoux.} \emph {Analysis and geometry of Markov diffusion operators} \rm{Grundlehren der mathematischen Wissenschaften~348. Springer} (2014)

\bibitem [B-L1] {B-L1}
{\sc D. Bakry,  M. Ledoux.} \emph {Levy--Gromov comparison Theorem for an infinite dimensional generators.} \rm{Invent. Math., 123, 259-281} (1996)

\bibitem [B-L2] {B-L2}
{\sc D. Bakry,  M. Ledoux.} \emph {Sobolev inequalities and Myers's diameter theorem for an abstract Markov generator.} \rm{Duke Math. J., l85, 1, 253-270} (1996)


\bibitem [B-B-J] {B-B-J} {\sc M. Barchiesi, A. Brancolini, V. Julin} \emph{Sharp dimension free quantitative estimates for the Gaussian isoperimetric inequality} \rm{Ann. of Probab,      45, 2, 668-697} (2017)

\bibitem [Bar] {Bar} {\sc  F. Barthe} \emph{Extremal Properties of Central Half-Spaces for Product Measures}, \rm{J. of Functional Analysis, 182,  81-107} (2001)


\bibitem [B-C-F] {B-C-F} {\sc F. Barthe, D. Cordero-Erausquin, M. Fradelizi.} \emph{Shift inequalities of Gaussian type and norms of barycenters} \rm{Studia Math. 146, 245–259} (2001) 

\bibitem [B-M] {B-M}{\sc F. Barthe,  B. Maurey.} \emph {Some remarks on isoperimetry of Gaussian type.} \rm{Ann. Inst. Henri Poincar\'e. Probab. Stat.  36, 419-434} (2000)


\bibitem [Bay] {Bay} {\sc  V. Bayle} \emph{Propriétés de concavité du profil isopérimétrique et applications}, \rm{Ph. Dissertation, Institut Joseph Fourier, Université de Grenoble} (2004)

\bibitem [Ben] {Ben}
{\sc A. Bentaleb.} \emph {Sur les fonctions extrémales des inégalités de Sobolev des opérateurs de diffusion.} \rm{Séminaire de probabilités de Strasbourg 36, 230-250} (2002)

\bibitem [Bo1] {Bo1}
{\sc S. Bobkov.} \emph {Extremal properties of half-spaces for log-concave distributions.} \rm{Ann. of Probab. 24, 35-48} (1996)

\bibitem [Bo2] {Bo2}
{\sc S. Bobkov.} \emph {An isoperimetric inequality on the discrete cube, and an elementary proof of the isoperimetric inequality in Gauss space.} \rm{Ann. of Probab. 25, 206-214} (1997)

\bibitem [C-M] {Car-M}  A. \sc Carbonaro, G. Mauceri  \emph{A note on bounded variation and heat semigroup on Riemannian manifolds}, \rm{Bull. Austral. Math. Soc. Vol. 76   155-160} (2007)

\bibitem [Car] {C}  E. \sc Carlen, \emph{Some integral identities and inequalities for entire functions and their application to the coherent state transform}, \rm{J. Funct. Anal. 97,  231–249} (1991)

\bibitem [C-K] {C-K}  E. \sc Carlen, C. \sc Kerce, \emph{On	the	Cases of Equality in Bobkov's Inequality and	Gaussian Rearrangement}, \rm{Calculus of Variations and Partial Differential Equations, 13, 1–18} (2001)

\bibitem [C-M1] {C-M1}  F. \sc Cavalletti,  A. \sc Mondino \emph{Sharp geometric and functional inequalities in metric-mesaure spaces with lower Ricci curvatiure bounds}, \rm{to appear in Invent. Math.} 

\bibitem [C-M2] {C-M2}  F. \sc Cavalletti,  A. \sc Mondino \emph{Sharp and rigid isoperimetric inequalities in metric-mesaure spaces with lower Ricci curvatiure bounds}, \rm{to appear in Geom. Topol.} 

\bibitem [CE] {CE}  D. \sc Cordero-Erausquin,  \emph{Transport inequalities for log-concave measures, quantitative forms and applications}, \rm{Can. J. of Math.} (2015) 

\bibitem [DC] {DC}  Y. \sc De Castro,  \emph{Quantitative Isoperimetric Inequalities on the Real Line}, \rm{Ann.
Math. Blaise Pascal 18 251–271} (2011) 

\bibitem [DG] {DG} E. \sc De Giorgi, \emph{Frontiere orientate di misura minima,} \rm{Seminario di Matematica della Scuola
Normale Superiore di Pisa, 1960-61} (1961)

\bibitem [Eld] {Eld}  R. \sc Eldan,  \emph{A two-sided estimate for the Gaussian noise stability deficit}, \rm{Invent. Math, 201, 561-624} (2015) 

\bibitem [Eng] {Eng}  A. \sc Engoulatov,  \emph{A universal bound on the gradient of logarithm of the heat kernel for manifolds with bounded Ricci curvature.} \rm{J. Funct. Anal. 238  no. 2, 518–529} (2006)

\bibitem [Kla] {Kl} B. \sc Klartag, \emph{Needle decomposition in Riemmanian geometry} \rm{Amer. Math. Soc., 249, 1180} (2017)

\bibitem [Gro] {G} \sc  L. Gross \emph{Logarithmic Sobolev inequalities}, \rm{Amer. J. Math., 97: 1061 - 1083} (1975) 

\bibitem [Le1] {Le1} \sc  M. Ledoux \emph{The geometry of Markov diffusion generators.}, \rm{Ann. Fac. Sci., Toulouse IX, 305-366}, (2000)

\bibitem [Le2] {Le2} \sc  M. Ledoux \emph{Chaos of a Markov operator and the fourth moment condition.}, \rm{Ann. of Probab. 40, 2439-2459} (2012)

\bibitem [Li] {Li} \sc  X.D. Li \emph{Hamilton's Harnack inequality and the $W$-entropy formula on complete Riemannian manifold.}, \rm{preprint} (2014)

\bibitem [M-S] {M-S} \sc   P. Mallavin, D.W. Stroock \emph{Short time behavior of the heat kernel and its logarithmic derivatives} \rm{Journ. of Diff. Geometry Vol 44, 550-570} (1996)

\bibitem [Mi1] {Mi1} \sc   E. Milman, \emph{Sharp Isoperimetric inequalities and model spaces for Curvature-Dimension-Diameter condition} \rm{J. Eur. Math. Soc. 17, 1041–1078} (2015)

\bibitem [Mi2] {Mi2} \sc   E. Milman, \emph{Spectral estimates, contractions and hypercontractivity} \rm{to appear in J. Spectr. Theory} (2015)

\bibitem [M-N] {M-N1} \sc   E. Mossel, J. Neeman \emph{Robust dimension free Gaussian isoperimetry} \rm{Ann. of Probab. 43, p.971-991} (2015)

\bibitem [Mor] {Mo} \sc   F. Morgan, \emph{Manifold with densities} \rm{ Notices Amer. Math. Soc. 52, p.853-858} (2005)

\bibitem [M-W] {M-W} \sc   C.E. Mueller, F.B. Weissler \emph{Hypercontractivity for the heat semigroup for ultraspherical polynomials and on the $n$-Spheres} \rm{Journ. of Funct. Analysis Vol 48, 252-283} (1982)

\bibitem [Nee] {Nee} \sc   J. Neeman, \emph{Isoperimetry and noise sensitivity in Gaussian space} \rm{Ph.Dissertation, University of California, Berkeley} (2013)

\bibitem [R-R] {R-R} \sc   M. Ritor\'e, C. Rosales  \emph{Existence and characterisation of regions minimizing perimeter under a volume constraint inside Euclidean cones} \rm{Tran. Amer. Math. Sco., electronics, 4601-4622} (2004)

\bibitem [Rot] {Rot} \sc O. Rothaus \emph{Hypercontractivity and the Bakry-Emery criterion for compact Lie groups} \rm {J. Funct. Anal. 65, no. 3, 358–367} (1986)

\bibitem [W-W] {W-W} \sc G. Wei, W. Wylie \emph{Comparison Geometry for the Bakry-Emery Ricci Tensor} \rm {J. Diff. Geom. 83, no. 2, 337–405} (2009)

\end{thebibliography}
\end{document}